\newtheorem{theorem}{Theorem}[section]
\newtheorem{corollary}[theorem]{Corollary}
\newtheorem{lemma}[theorem]{Lemma}
\newtheorem{proposition}[theorem]{Proposition}
\newtheorem{definition}[theorem]{Definition}
\newtheorem{example}[theorem]{Example}
\newtheorem{remark}[theorem]{Remark}
\numberwithin{equation}{section}
\DeclareMathOperator{\perm}{perm}
\DeclareMathOperator{\Int}{Int}
\DeclareMathOperator{\Des}{Des}
\DeclareMathOperator{\Comp}{Comp}
\DeclareMathOperator{\type}{type}
\DeclareMathOperator{\link}{lk}
\DeclareMathOperator{\id}{id}
\DeclareMathOperator{\Stab}{Stab}
\DeclareMathOperator{\im}{im}
\newcommand{\hz}{\hat{0}}
\newcommand{\ho}{\hat{1}}
\newcommand{\coveredby}{\prec}
\newcommand{\SSSS}{{\mathfrak S}}
\newcommand{\tensor}{\otimes}
\newcommand{\rH}{\widetilde{H}}
\newcommand{\rchi}{\widetilde{\chi}}
\newcommand{\rbeta}{\widetilde{\beta}}
\newcommand{\Ppp}{{\mathbb P}}
\newcommand{\plusdots}{+ \cdots +}
\newcommand{\timesdots}{\times \cdots \times}
\newcommand{\Pistar}{\Pi^{*}}
\newcommand{\cp}{\vec{c}\,^{\prime}}
\newcommand{\cpp}{\vec{c}\,^{\prime\prime}}
\newcommand{\ddp}{\vec{d}\,^{\prime}}
\newcommand{\onethingatopanother}[2]{\genfrac{}{}{0pt}{}{#1}{#2}}
\newcommand{\journal}[6]{{\sc #1,} #2, {\it #3} {\bf #4} (#5), #6.}
\newcommand{\book}[4]{{\sc #1,} #2, #3, #4.}
\newcommand{\thesis}[4]{{\sc #1,} ``#2,'' Doctoral dissertation, #3, #4.}
\begin{document}

\author{Richard Ehrenborg \and Dustin Hedmark}
\title{Filters in the Partition Lattice}
\keywords{partition, homology, symmetric group, representation theory}

\begin{abstract}
Given a filter $\Delta$
in the poset of compositions of $n$, we form
the filter $\Pistar_{\Delta}$ in the partition lattice.
We determine all the reduced homology groups
of the order complex of $\Pistar_{\Delta}$
as $\SSSS_{n-1}$-modules in terms of the reduced homology
groups of the simplicial complex $\Delta$ and in terms of Specht modules
of border shapes.
We also obtain the homotopy type of this order complex.
These results generalize work of
Calderbank--Hanlon--Robinson and Wachs
on the $d$-divisible partition lattice.
Our main theorem applies to a plethora of examples, including
filters associated to integer knapsack partitions
and 
filters generated
by all partitions having block sizes $a$ or~$b$.
We also obtain the 
reduced homology groups
of the filter generated
by all partitions having block sizes belonging
to the arithmetic progression
$a, a + d, \ldots, a + (a-1) \cdot d$,
extending work of Browdy.
\end{abstract}

\maketitle

\section{Introduction}
\label{section_introduction}

In his physics dissertation
Sylvester~\cite{Sylvester} considered 
the even partition lattice,
that is, the poset of all set partitions where the blocks have
even size.
He computed the M\"obius function of this lattice and
showed that it equals, up to a sign, the tangent number.
Stanley then introduced the $d$-divisible partition lattice. This is 
the collection of all set partitions with  blocks having size divisible by $d$, denoted by~$\Pi_{n}^d$.
He showed that the M\"obius function is, up to a sign,
the number of permutations in
the symmetric group~$\SSSS_{n-1}$
with descent set $\{d,2d, \ldots, n-d\}$;
see~\cite{exponential_structures}.

Calderbank, Hanlon and Robinson~\cite{Calderbank_Hanlon_Robinson}
continued this work by studying the top
homology group of
the order complex~$\triangle(\Pi_{n}^{d} - \{\ho\})$
and gave an explicit description of
the $\SSSS_{n-1}$-action on this homology group
in terms of a Specht module.
However, they were unable to obtain the other homology
groups and asked Wachs if it was possible that the complex
$\triangle(\Pi_{n}^{d} - \{\ho\})$ was shellable, which
would imply that the other homology groups are trivial.
Wachs~\cite{Wachs} 
proved that this was indeed the case by showing
that the poset $\Pi_{n}^{d} \cup \{\hz\}$ is
$EL$-shellable,
and thus the homotopy type
of the complex
$\triangle(\Pi_{n}^{d} - \{\ho\})$ is a wedge
of spheres of the same dimension.
Additionally, Wachs gave a different proof for the $\SSSS_{n-1}$-action
on the top homology of~$\Pi_{n}^d$,
as well as matrices for the action of~$\SSSS_{n}$ on this homology.

Ehrenborg and Jung~\cite{Ehrenborg_Jung}
further generalized the $d$-divisible partition lattice by defining
a subposet~$\Pistar_{\vec{c}}$ of the partition lattice for
a composition $\vec{c}$ of~$n$. The subposet
reduces to the $d$-divisible partition lattice when the composition $\vec{c}$
is given by $\vec{c}=(d,d,\dots,d)$.
Their work consists of three main results.
First, they showed that the M\"obius function of $\Pistar_{\vec{c}} \cup \{\hz\}$
equals, up to a given sign,
the number of 
permutations in
$\SSSS_{n}$ ending with the element $n$
having descent composition~$\vec{c}$.
Second, they showed that the order complex 
$\triangle(\Pistar_{\vec{c}} - \{\ho\})$ is 
homotopy equivalent to
a wedge of 
spheres of the same dimension. Lastly,
they proved that the action of $\SSSS_{n-1}$
on the top homology group of $\triangle(\Pistar_{\vec{c}} - \{\ho\})$ is given by
the Specht module corresponding to the composition~$\vec{c}-1$.

In the current paper we continue this research program
by considering a
more general class of filters in the partition lattice.
Let~$\Delta$ be a filter in the poset of compositions.
Since the poset of compositions is isomorphic to
a Boolean algebra, the filter~$\Delta$ under the reverse
order is a lower order ideal and hence can be viewed
as the face poset of a simplicial complex.
We define the associated filter $\Pistar_{\Delta}$ in the partition
lattice. This extends the definition of~$\Pistar_{\vec{c}}$.
In fact, when $\Delta$ is a simplex generated
by the composition~$\vec{c}$ the two definitions agree.

Our main result is that we can determine all the reduced
homology groups of the order complex
$\triangle(\Pistar_{\Delta} - \{\ho\})$
in terms of the reduced homology groups of links
in $\Delta$ and of Specht modules
of border shapes;
see Theorem~\ref{theorem_main_theorem_order_complex}.
The proof proceeds by 
showing that if the result holds
for the two complexes~$\Delta$, $\Gamma$
and also for their intersection $\Delta \cap \Gamma$,
then it holds for their union $\Delta \cup \Gamma$.
Furthermore, the proof relies on 
Mayer--Vietoris sequences
to construct the isomorphism of
Theorem~\ref{theorem_main_result}.
As our main tool, we use Quillen's fiber lemma to translate
topological data
from the filter~$Q^{*}_{\Delta}$
to the filter~$\Pistar_{\Delta}$. 

We also present a second proof of our main result,
Theorem~\ref{theorem_main_result},
using an equivariant poset fiber theorem of
Bj\"orner, Wachs and Welker~\cite{Bjorner_Wachs_Welker}.
Even though this approach is concise, it does not yield
an explicit construction of the isomorphism of
Theorem~\ref{theorem_main_result}.
In particular, our hands on approach using Mayer--Vietoris
sequences reveals how the homology groups of
$\triangle(\Pistar_{\Delta}-\{\ho\})$ are changing as
the complex $\Delta$ is built up.
Once again, the Ehrenborg--Jung
result on $Q^{*}_{\vec{c}}$
is needed
to apply the poset fiber theorem.

Our main result yields explicit expressions
for the reduced homology groups of the complex
$\triangle(\Pistar_{\Delta}-\{\ho\})$,
most notably when $\Delta$ is homeomorphic to a ball
or to a sphere.
The same holds when $\Delta$ is a shellable complex.
We are able to describe
the homotopy type of the order complex
$\triangle(\Pistar_{\Delta} - \{\ho\})$
using the homotopy fiber theorem
of~\cite{Bjorner_Wachs_Welker}.
Again, when~$\Delta$ is homeomorphic to a ball
or to a sphere, we obtain that $\Pistar_{\Delta}$ is
a wedge of spheres.
We are also able to lift discrete Morse matchings from~$\Delta$
and its links to form a discrete Morse matching
on the filter of ordered set partitions~$Q^{*}_{\Delta}$.

In Sections~\ref{section_examples}
through~\ref{section_partition_filter_a_b}
we give a plethora of examples of our results.
We consider the case when the complex~$\Delta$
is generated by a knapsack partition
to obtain a previous result of
Ehrenborg and Jung.
In Section~\ref{section_Frobenius}
we study the case when
$\Lambda$ is a semigroup of positive integers
and we consider the filter of partitions whose
block sizes belong to the semigroup~$\Lambda$.
When $\Lambda$ is generated by
the arithmetic progression
$a, a+d, a+2d, \ldots$ we are able to describe
the reduced homology groups of the associated
filter in the partition lattice.
The particular case when $d$ divides $a$
was studied by Browdy~\cite{Browdy},
where the filter $\Lambda$ consists of partitions whose block sizes
are divisible by $d$ and are greater than or equal to~$a$.
Finally, in Section~\ref{section_partition_filter_a_b}
we study the filter
corresponding to the semigroup generated by
two relative prime integers.
Here we are able to give explicit results for
the top and bottom reduced homology groups.

Other previous work in this area is due to 
Bj\"orner and Wachs~\cite{Bjorner_Wachs_non_pure_I}.
Additionally, Sundaram
studied the subposet of the partition
lattice defined by a set of forbidden block sizes
using plethysm and the Hopf trace formula;
see~\cite{Sundaram_Hopf,Sundaram}.

We end the paper by posing questions for further study.

\section{Integer and set partitions}
\label{section_integer_set_partitions}

We define an integer partition $\lambda$ to be a finite multiset of
positive integers. Thus
the multiset
$\lambda = \{\lambda_{1}, \lambda_{2}, \ldots, \lambda_{k}\}$
is a partition of $n$ if
$\lambda_{1} + \lambda_{2} \plusdots \lambda_{k} = n$.
Sometimes it will be necessary to consider the
multiplicity of the elements of the partition~$\lambda$.
We then write
$$ \lambda
=
\{\lambda_{1}^{m_{1}}, \lambda_{2}^{m_{2}},
                              \ldots, \lambda_{p}^{m_{p}}\}
, $$
where we tacitly assume that $\lambda_{i} \neq \lambda_{j}$
for two different indices $i \neq j$.

Let $I_{n}$ be the set of all integer partitions of $n$.
We form a poset on these integer partitions where
the cover relation is given by adding two parts.
In terms of multisets the cover relation is
$$
\{\lambda_{1}, \lambda_{2},\lambda_{3}, \ldots, \lambda_{k}\}
      \coveredby
\{\lambda_{1} + \lambda_{2},\lambda_{3}, \ldots, \lambda_{k}\} . $$
Note that the partition $\{1,1, \ldots, 1\}$ is the minimal element
and $\{n\}$ is the maximal element in the partial order.

Let $\Pi_{n}$ denote the poset of all set partitions of
$[n] = \{1,2, \ldots, n\}$
where the partial order is given  by merging blocks,
that is,
$$    \{B_{1}, B_{2}, B_{3}, \ldots, B_{k}\}
     \coveredby
        \{B_{1} \cup B_{2}, B_{3}, \ldots, B_{k}\}  .  $$
The poset $\Pi_{n}$ is in fact a lattice,
called the partition lattice.
Let $|\pi|$ denote the number of blocks of the partition~$\pi$.
Furthermore, for a set partition
$\pi = \{B_{1}, B_{2}, \ldots, B_{k}\}$
define its type to be
the integer partition of $n$ given by the multiset 
$\type(\pi) = \{|B_{1}|, |B_{2}|, \ldots, |B_{k}|\}$.

The symmetric group $\SSSS_{n}$ acts on
subsets of $[n]$ by relabeling the elements.
Similarly, the symmetric group $\SSSS_{n}$ acts on
the partition lattice by relabeling the elements of the blocks.
For $\pi = \{B_{1}, B_{2}, \ldots, B_{k}\}$ a set partition
the action is given by
$\alpha \cdot \pi = \{\alpha(B_{1}), \alpha(B_{2}), \ldots, \alpha(B_{k})\}$.
Finally, when we speak about the action of the symmetric
group $\SSSS_{n-1}$, we view
the group $\SSSS_{n-1}$ as the subgroup
$\{\alpha \in \SSSS_{n} \: : \: \alpha_{n} = n\}$
of the symmetric group~$\SSSS_{n}$.

\section{Compositions and ordered set partitions}
\label{section_compositions_ordered_sets}

A composition
$\vec{c} = (c_{1}, c_{2}, \ldots,c_{k})$ of $n$
is an ordered list of positive integers such that
$c_{1} + c_{2} \plusdots c_{k} = n$.
Let $\Comp(n)$ be the set of all compositions of $n$.
We make $\Comp(n)$ into a poset by
introducing the cover relation
given by adding adjacent entries, that is,
$$  (c_{1},\ldots,c_{i},c_{i+1},\ldots,c_{k})
   \coveredby
      (c_{1},\ldots,c_{i}+c_{i+1},\ldots,c_{k}) . $$
The poset $\Comp(n)$ is isomorphic to 
the Boolean algebra on $n-1$ elements.
Note that 
$(1,1, \ldots, 1)$ and $(n)$
are the minimal and maximal elements of $\Comp(n)$, respectively.
Define the type of a composition
$\vec{c} = (c_{1}, c_{2}, \ldots, c_{k})$
to be the integer partition
$\type(\vec{c}\,) = \{c_{1}, c_{2}, \ldots,c_{k}\}$ of~$n$.
Furthermore, let $|\vec{c}\,|$ denote the number of parts
of the composition~$\vec{c}$. 

For a composition
$\vec{c} = (c_{1}, c_{2}, \ldots, c_{k})$
of $n$, the {\em multinomial coefficient}
is given by
$$  \binom{n}{\vec{c}}
    =
\binom{n}{c_{1}, c_{2}, \ldots, c_{k}}
     =
\frac{n!}{c_{1}! \cdot c_{2}! \cdots c_{k}!}  .  $$

For $\alpha\in\SSSS_{n}$, let the descent set of $\alpha$,
denoted by $\Des(\alpha)$, be the subset of $[n-1]$
given by $\Des(\alpha)=\{i\in[n-1]\: : \: \alpha(i)>\alpha(i+1)\}$.
Throughout this paper it will be more
convenient to consider $\Des(\alpha)$ as a composition of $n$,
namely,
if $\Des(\alpha)=\{i_{1} < i_{2} < \cdots < i_{k}\}$,
then we consider $\Des(\alpha)$ as a composition of $n$
given by $\Des(\alpha)=(i_{1},i_{2}-i_{1}, \ldots, i_{k}-i_{k-1},n-i_{k})$.
Note that the identity permutation
$(1,2,\ldots,n)$ has descent composition~$(n)$.

Let $\beta_{n}(\vec{c}\,)$ be the number of permutations
$\alpha$ in $\SSSS_{n}$ such that $\Des(\alpha)=\vec{c}$.
Likewise, define~$\beta_{n}^{*}(\vec{c}\,)$
to be the number of permutations $\alpha$ in $\SSSS_{n}$ with 
descent composition $\vec{c}$ and $\alpha(n)=n$.
Observe that
\begin{equation}
\binom{n-1}{c_{1}, \ldots, c_{k-1}, c_{k}-1}
=
\sum_{\onethingatopanother
                 {\vec{d} \in \Comp(n)}
                 {\vec{c}\,\leq\vec{d}}}
\beta_{n}^{*}(\vec{d}\,)
.
\label{equation_beta_star_binomial}
\end{equation}

An {\em ordered set partition} $\sigma = (C_{1}, C_{2}, \ldots, C_{p})$
of~$[n]$ is a 
list of non-empty blocks such that
the set 
$\{C_{1}, C_{2}, \ldots, C_{p}\}$ is a partition of the set $[n]$,
where the order of the blocks now matters.
Let $|\sigma|$ denote the number of blocks
in the ordered set partition~$\sigma$.

Let $Q_{n}$ be the set of all ordered set partitions on the set $[n]$.
Introduce a partial order on~$Q_{n}$ where the cover relation
is joining adjacent blocks, that is,
$$
(C_{1}, \ldots, C_{i}, C_{i+1}, \ldots, C_{p})
\coveredby
(C_{1}, \ldots, C_{i} \cup C_{i+1}, \ldots, C_{p}) .
$$
Observe that the poset~$Q_{n}$ has the maximal element~$([n])$,
along with $n!$ minimal elements,
namely the ordered set partitions
$(\{\alpha_{1}\}, \{\alpha_{2}\}, \ldots, \{\alpha_{n}\})$, one for each
permutation $\alpha_{1} \alpha_{2} \cdots \alpha_{n} \in \SSSS_{n}$.
Moreover, every interval in~$Q_{n}$
is isomorphic to a Boolean algebra. 

Define the {\em type} of an ordered set partition
$\sigma = (C_{1}, C_{2}, \ldots, C_{k})$
to be the composition of~$n$ given
by $\type(\sigma) = (|C_{1}|, |C_{2}|, \ldots, |C_{k}|)$.
\begin{definition}
\label{definition_sigma}
For a permutation $\alpha \in \SSSS_{n}$
and a composition
$\vec{d} = (d_{1}, d_{2}, \ldots, d_{k})$ of~$n$,
let~$\sigma(\alpha,\vec{d}\,)$ denote 
the unique ordered set partition in $Q_{n}$
of type~$\vec{d}$ whose elements are given,
in order, by the permutation~$\alpha$,
that is,
$$
\sigma(\alpha,\vec{d}\,)
  =
(\{\alpha(1), \ldots, \alpha(d_{1})\},
\{\alpha(d_{1}+1), \ldots, \alpha(d_{2})\},
\ldots,
\{\alpha(d_{k-1}+1), \ldots, \alpha(n)\})
.
$$
\end{definition}

Finally, the symmetric group $\SSSS_{n}$ acts on
ordered set partitions by relabeling, that is
$$  \alpha \cdot (C_{1}, C_{2}, \ldots, C_{k})
    =
     (\alpha(C_{1}), \alpha(C_{2}), \ldots, \alpha(C_{1}))  .  $$

\section{Topological considerations}
\label{section_topological_considerations}

Let $P$ be a poset.
Recall the order complex of $P$, denoted~$\triangle(P)$,
is the simplicial
complex whose $i$-dimensional faces
are the chains in $P$ with $i+1$ elements.
If $P$ has a minimal element~$\hz$
or a maximal element~$\ho$,
then $\triangle(P)$ is a contractible complex.
Thus we will be removing these elements to
ensure interesting topology.

Recall a simplicial complex~$\Delta$
is a finite collection of sets such that
the empty set belongs to~$\Delta$
and
$\Delta$~is closed under inclusion.
We will find it easier to view a simplicial complex
as a partially ordered set $\Delta$ such that 
(i) $\Delta$ has a unique minimal element~$\hz$
and
(ii) every interval $[\hz,x]$ for $x \in \Delta$ is 
isomorphic to a Boolean algebra.
A poset $P$ satisfying these conditions is called
a \emph{simplicial poset}.
Notice that a poset $P$ is simplicial if $P$ is the face poset
of a simplicial complex. 
Furthermore, note that the second condition in the 
definition of a simplicial poset makes the poset~$\Delta$ ranked
since every saturated chain between the minimal element~$\hz$
and an element~$x$ has the same length.
Thus the {\em dimension} of an element $x$ is defined
by its rank minus one, that is,
$\dim(x) = \rho(x) - 1$.

A filter in a poset $P$ is an upper order ideal.
Hence if $F$ is a filter in $P$, 
then the dual filter~$F^{*}$  in the dual poset~$P^{*}$
is now a lower order ideal.
In particular,
if $\Delta \subseteq \Comp(n)$ is a filter, 
since upper order ideals in~$\Comp(n)$
are isomorphic to Boolean algebras,
the dual of $\Delta$ is a simplicial poset
in the dual space~$\Comp(n)^{*}$, which 
has cover relation given by splitting rather than merging.
To emphasize that we have dualized, we use~$\leq^{*}$
to denote the order relation in the dualized~$\Comp(n)$.

Lastly, the {\em link} of a face $F$ in
a simplicial complex $\Delta$ is given by
$\link_{F}(\Delta) = \{G \in \Delta \: : \: F \cup G \in \Delta, \:
F \cap G = \emptyset\}$.
However, working with the poset definition of a simplicial
complex, we have the following equivalent definition of the link.
The link is the principle filter generated by the face~$x$,
that is,
$\link_{x}(\Delta) = \{y \in \Delta \: : \: x \leq y\}$.
One advantage of this definition is that we do not have
to relabel the faces when considering the link.

From now on our simplicial complex~$\Delta$
will be a filter in the composition lattice~$\Comp(n)$, 
with the dual order~$\leq^{*}$.

Let $C_{k}(\Comp(n))$ be the linear span over~$\mathbb{C}$ of
all compositions of~$n$ into $k+2$ parts.
We obtain a chain complex by defining the
boundary map as follows.
Define the map
$\partial_{k,j}:C_{k}(\Comp(n)) \longrightarrow C_{k-1}(\Comp(n))$ by
$$ \partial_{k,j}(c_{1},\ldots,c_{j},c_{j+1},\ldots,c_{k+2})
   =
(c_{1},\ldots,c_{j}+c_{j+1},\ldots,c_{k+2})  . $$
Then the boundary map on $\Comp(n)$ is given by
$\partial_{k} = \sum_{j=1}^{k+1} (-1)^{j-1} \cdot \partial_{k,j}$

Consider the dual order on the set of ordered set partitions~$Q_{n}$.
For $\Delta \subseteq \Comp(n)$ a complex, let
$Q_{\Delta}=\{\tau \in Q_{n} \,:\, \type(\tau)\in\Delta\}$.
The filter $Q_{\Delta}$
is also a simplicial poset, so we refer to~$Q_{\Delta}$ as a complex. 

Define $C_{k}(Q_{n})$ to be the linear span over~$\mathbb{C}$ of all ordered
set partitions of $[n]$ with $k+2$ blocks.
The boundary map
$\partial_{k} : C_{k}(Q_{n}) \longrightarrow C_{k-1}(Q_{n})$
on $Q_{n}$ is given by
$\partial_{k}(\sigma(\alpha,\vec{d}\,))
=
\sigma(\alpha,\partial_{k}(\vec{d}\,))$,
where $\partial_{k}(\vec{d}\,)$
is the boundary map applied to the composition $\vec{d}$
in $C_{k}(\Comp(n))$, and 
where $\sigma(\alpha,\vec{c}\,)$
is given in Definition~\ref{definition_sigma}.
This boundary map is inherited by the subcomplex~$Q_{\Delta}$.

Finally, for simplicial complexes $\Delta$ and~$\Gamma$
in $\Comp(n)$ and $\Comp(m)$ respectively, their {\em join}
is defined to be poset
$$ \Delta * \Gamma
  =
     \{\vec{c} \circ \vec{d} 
           \: : \:
       \vec{c} \in \Delta, \vec{d} \in \Gamma \} ,  $$
where $\circ$ denote the concatenation of compositions.
Note that the join $\Delta * \Gamma$ has
the composition~$(n,m)$ as its minimal element.
Furthermore, we have the following basic lemma
on Morse matchings of joins of complexes.
\begin{lemma}
\label{lemma_critical_cell_join}
Let $\Delta$ and $\Gamma$ be two complexes
in $\Comp(m)$ and $\Comp(m)$ respectively,
each having a discrete Morse matching.
Let~$\Delta^{c}$ and 
$\Gamma^{c}$ be the sets of critical cells of $\Delta$ and~$\Gamma$, respectively.
Then the join~$\Delta * \Gamma$ has a Morse
matching where the critical cells are
$$
     \{\vec{c} \circ \vec{d} 
           \: : \:
       \vec{c} \in \Delta^{c}, \vec{d} \in \Gamma^{c} \} .  $$
\label{lemma_join_critical_cells}
\end{lemma}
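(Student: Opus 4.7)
The plan is to build the matching on $\Delta * \Gamma$ as the standard product of $M_{\Delta}$ and $M_{\Gamma}$, and then to verify acyclicity by projecting any alleged directed cycle to one of the two factors.

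First, I would observe that the map $(\vec{c}, \vec{d}) \mapsto \vec{c} \circ \vec{d}$ realizes $\Delta * \Gamma$ as the product of posets $\Delta \times \Gamma$ in the dual order: any split of $\vec{c} \circ \vec{d}$ in $\Comp(n+m)$ must occur entirely within the first $|\vec{c}|$ entries (which sum to $n$) or entirely within the last $|\vec{d}|$ entries (which sum to $m$), so every cover relation $\vec{c} \circ \vec{d} \coveredby^{*} \vec{e}$ in $\Delta * \Gamma$ arises from either a cover $\vec{c} \coveredby^{*} \vec{c}\,'$ in $\Delta$ with $\vec{d}$ fixed, or a cover $\vec{d} \coveredby^{*} \vec{d}\,'$ in $\Gamma$ with $\vec{c}$ fixed. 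In particular, the decomposition $\vec{c} \circ \vec{d}$ is recoverable from the concatenated composition.

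Next, I would define a matching $M$ on $\Delta * \Gamma$ by the priority rule: if $\vec{c} \notin \Delta^{c}$, match $\vec{c} \circ \vec{d}$ with $M_{\Delta}(\vec{c}) \circ \vec{d}$; else if $\vec{d} \notin \Gamma^{c}$, match it with $\vec{c} \circ M_{\Gamma}(\vec{d})$; else declare $\vec{c} \circ \vec{d}$ critical. The factorization of covers from the first step makes each proposed pair a genuine Hasse edge, and a short check confirms that the rule is involutive on the non-critical cells, so $M$ is a valid partial matching. Its critical cells are exactly $\{\vec{c} \circ \vec{d} : \vec{c} \in \Delta^{c},\, \vec{d} \in \Gamma^{c}\}$, as required.

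The main obstacle is proving acyclicity of $M$. I would argue by contradiction: suppose a directed cycle $z_{0} \to z_{1} \to \cdots \to z_{2k-1} \to z_{0}$ exists in the $M$-modified Hasse diagram, where matched edges are oriented up and unmatched edges down. Each step modifies either the $\vec{c}$-coordinate or the $\vec{d}$-coordinate but not both, and a short case analysis of the priority rule shows that a step modifying $\vec{c}$ is a matched up-edge or an unmatched down-edge of the $M_{\Delta}$-modified Hasse diagram of $\Delta$, with the symmetric statement for $\vec{d}$. Projecting the cycle to the $\vec{c}$-factor thus produces a closed walk in the $M_{\Delta}$-modified diagram (with the $\Gamma$-steps becoming stays); if this walk is nontrivial, it contains a simple directed cycle, contradicting acyclicity of $M_{\Delta}$. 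Otherwise every step of the original cycle acts only on the $\vec{d}$-coordinate, and projecting to $\vec{d}$ yields a nontrivial cycle in the $M_{\Gamma}$-modified diagram, contradicting acyclicity of $M_{\Gamma}$. Therefore $M$ is acyclic, and the lemma follows.
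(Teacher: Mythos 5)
Your matching is exactly the paper's: pair $\vec{c}\circ\vec{d}$ with $M_\Delta(\vec{c})\circ\vec{d}$ when $\vec{c}$ is non-critical, otherwise with $\vec{c}\circ M_\Gamma(\vec{d})$ when $\vec{d}$ is non-critical, leaving $\Delta^c\circ\Gamma^c$ as the critical cells. The only difference is that the paper declares acyclicity ``straightforward to verify,'' whereas you supply the standard projection-to-a-factor argument; that elaboration is correct and does not change the approach.
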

\begin{proof}
Define a matching of the join $\Delta * \Gamma$ as follows.
If $\vec{c} \coveredby \cp$ is an edge in the discrete Morse
matching of~$\Delta$ and
$\vec{d} \in \Gamma$
then match
$\vec{c} \circ \vec{d} \coveredby \cp \circ \vec{d}$.
If $\vec{c}$ is a critical cell of $\Delta$
and
$\vec{d} \coveredby \ddp$ is an edge in the discrete Morse
matching of $\Gamma$
then match
$\vec{c} \circ \vec{d} \coveredby \vec{c} \circ \ddp$.
It is straightforward to verify that this matching is acyclic
and that the set of critical cells is as described.
\end{proof}

\section{Border strips and Specht modules}
\label{section_border_strips}

A border strip $B$ is a connected skew-shape which does not contain
a two by two square.
For each composition $\vec{c} = (c_{1}, c_{2}, \ldots, c_{k})$
there is a unique border strip
such that the number of boxes in the $i$th row is given
by $c_{i}$ and every two adjacent rows overlap in 
one position.
Denote this border strip by~$B(\vec{c}\,)$. 
See Figure~\ref{figure_stabilizer} for an example.

In an analogous fashion, for a composition $\vec{c} \in \Comp(n)$
we define the border shape~$A(\vec{c}\,)$ to be the skew-shape
whose $i$th row has length~$c_i$ such that
the rows of $A(\vec{c}\,)$ are non-overlapping.

Let $R_{i}$ be the interval
$[c_{1} + \cdots + c_{i-1} + 1, c_{1} + \cdots + c_{i-1} + c_{i}]$.
The row stabilizer of the border strip $B(\vec{c}\,)$
is the subgroup
$\SSSS_{R_{1}} \times \SSSS_{R_{2}} \timesdots \SSSS_{R_{k}}$
of the symmetric group~$\SSSS_{n}$.

Since the poset $\Comp(n)$ of all compositions of $n$ is
a isomorphic to Boolean algebra,
every composition has a complementary
composition $\vec{c}^{\,c}$.
To obtain the complement of composition
write every part of the composition as a sum of $1$s 
where we separate the parts with commas.
Then the complement is obtained by
exchanging the plus signs and the commas.
Similarly, the column stabilizer is defined as
the row stabilizer of the border strip of the complementary
composition. More precisely, let
$(d_{1}, d_{2}, \ldots, d_{p})$
be the complementary composition~$\vec{c}\,^{c}$
and let
$K_{i}$ be the interval
$[d_{1} \plusdots d_{i-1} + 1,  d_{1} \plusdots d_{i-1} + d_{i}]$.
Then the column stabilizer is the subgroup
$\SSSS_{\vec{c}}^{C}
=
\SSSS_{K_{1}} \times \SSSS_{K_{2}} \timesdots \SSSS_{K_{p}}$.
See Figure~\ref{figure_stabilizer}.

We now review some basic representation theory of the symmetric group. For a less terse introduction,
see~\cite[Chapter~3]{Sagan}.
A border strip tableau~$t$ of shape~$\vec{c}$
is a filling of the border strip~$B(\vec{c}\,)$.
We say a tableau $t$ is {\em standard} if the entries of~$t$
are increasing along the rows from left to right
and increasing down the columns.
A border strip tabloid, denoted~$[t]$,
is a border strip tableau under row equivalence.
Define the permutation module, $M^{B(\vec{c}\,)}$,
to be the vector space with basis elements given
by all tabloids of shape~$B(\vec{c}\,)$.
A polytabloid is defined
by the alternating sum
$e_{t}
=
\sum_{\gamma \in \SSSS^{C}_{\vec{c}}}
       (-1)^{\gamma} \cdot [\gamma \cdot t]$,
where $\SSSS^{C}_{\vec{c}}$
is the column stabilizer of the tableau~$t$ of shape~$\vec{c}$.
Lastly, the Specht module,
denoted $S^{B(\vec{c}\,)}$, is the subspace of $M^{B(\vec{c}\,)}$ generated by polytabloids.
The dimension of
the Specht module~$S^{B(\vec{c}\,)}$
is given by the descent set statistics~$\beta_{n}(\vec{c}\,)$,
while the dimension of the permutation module~$M^{B(\vec{c}\,)}$
is given by the multinomial coefficient~$\binom{n}{\vec{c}}$.

We now define two operations on compositions.
The motivation comes from the associated
Specht and permutation modules.
For a composition
$\vec{c} = (c_{1},\ldots,c_{k-1},c_{k})$
let $\vec{c}-1$ denote the composition
$(c_{1},\ldots,c_{k-1},c_{k}-1)$
if $c_{k} \geq 2$,
and otherwise let $\vec{c}-1$ denote the empty composition.
Similarly,
let $\vec{c}/1$ denote the composition
$(c_{1},\ldots,c_{k-1},c_{k}-1)$
if $c_{k} \geq 2$,
and otherwise let $\vec{c}/1$ denote
the composition $(c_{1},\ldots,c_{k-1})$.
Note that
if $\vec{c}$ is a composition of~$n$
then
$\vec{c}/1$ is always a composition of~$n-1$.

For a composition $\vec{c}$ of $n$
let $B^{*}(\vec{c}\,)$ denote the border strip~$B(\vec{c} - 1)$.
All our results of this paper
are stated in terms of
the Specht modules $S^{B^{*}(\vec{c}\,)}$
where the group action is by~$\SSSS_{n-1}$.
We think of this Specht module as a submodule
of $S^{B(\vec{c}\,)}$ spanned by all standard Young tableaux
where the northeastern-most box is filled with~$n$.
Note that when the composition ends with the entry~$1$,
there are no such standard Young tableaux, and hence
$S^{B^{*}(\vec{c}\,)}$ is the zero module.

For a composition~$\vec{c}$
define the two shapes
$B^{\#}(\vec{c}\,)=B(\vec{c}/1)$
and
$A^{\#}(\vec{c}\,)=A(\vec{c}/1)$.
Observe that the permutation module~$M^{B^{\#}(\vec{c}\,)}$
is a submodule of~$M^{B(\vec{c}\,)}$
That is, the span of the tabloids in~$M^{B(\vec{c}\,)}$
where the tabloids has the filling~$n$ in the northeastern-most box
is the module~$M^{B^{\#}(\vec{c}\,)}$.

Furthermore,
the dimensions of
the Specht module~$S^{B^{*}(\vec{c}\,)}$
and
the permutation module~$M^{B^{\#}(\vec{c}\,)}$
are~$\beta^{*}_{n}(\vec{c}\,)$ and~$\binom{n}{\vec{c}/1}$, respectively.
Additionally, we have the decomposition
$$
M^{B^{\#}(\vec{c}\,)}
\cong_{\SSSS_{n-1}}
\bigoplus_{\vec{c} \leq \vec{d}} S^{B^{*}(\vec{d}\,)} ,
$$
which is the representation theoretic analogue of
equation~\eqref{equation_beta_star_binomial}.
See Lemma~\ref{lemma_permutation_module_isomorphism}
for a proof.

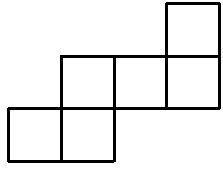
\begin{figure}[t]
\setlength{\unitlength}{0.7mm}
\begin{center}
\begin{picture}(40,40)(0,0)
\thicklines
\put(0,0){\line(1,0){20}}
\put(0,10){\line(1,0){40}}
\put(10,20){\line(1,0){30}}
\put(30,30){\line(1,0){10}}
\put(0,0){\line(0,1){10}}
\put(10,0){\line(0,1){20}}
\put(20,20){\line(0,-1){20}}
\put(30,10){\line(0,1){20}}
\put(40,10){\line(0,1){20}}
\end{picture}
\end{center}
\caption{The border strip $B(\vec{c}\,)$ associated with
the composition $\vec{c} = (2,3,1)$.
The row stabilizer is the group
$\SSSS_{\vec{c}} =
\SSSS_{[1,2]} \times \SSSS_{[3,5]} \times \SSSS_{[6,6]}$.
Note that $\vec{c} = (1+1,1+1+1,1)$
so that the complementary composition is
$\vec{c}^{\,c} = (1,1+1,1,1+1) = (1,2,1,2)$.
Hence the column stabilizer is
$\SSSS_{\vec{c}}^{C} =
\SSSS_{[1,1]} \times \SSSS_{[2,3]} \times \SSSS_{[4,4]}
    \times \SSSS_{[5,6]}$.}
\label{figure_stabilizer}
\end{figure}

\section{The ordered partition filter $Q_{\Delta}^{*}$}
\label{section_the_main_result}

We now introduce the ordered partition filter~$Q_{\Delta}^{*}$.
This filter will serve us as an important stepping stone
to understanding the topology of 
general filters in the partition lattice.
The transition from~$Q_{\Delta}^*$ to the partition lattice
uses Quillen's Fiber Lemma; see
Section~\ref{section_filters_in_the_set_partition_lattice}.
Note that by considering the reverse orders
in $\Comp(n)$ and in $Q_{n}$ we obtain two simplicial
posets.
Hence for~$\Delta$ a non-empty filter in~$\Comp(n)$, 
we view~$\Delta$ as a simplicial complex under
the reverse order~$\leq^{*}$. See the discussion in
Section~\ref{section_topological_considerations}.

\begin{definition}
Let $\Delta$ be a filter in $\Comp(n)$, that is,
$\Delta$ is a simplicial complex consisting of compositions of~$n$.
Define the ordered partition filter~$Q^{*}_{\Delta}$ to be all
ordered set partitions whose type is in
the complex~$\Delta$ and whose last block contains the
element $n$, that is,
$$
Q^{*}_{\Delta}
= \{\sigma = (C_{1}, C_{2}, \ldots, C_{k}) \in Q_{n}
      \: : \: \type(\sigma) \in \Delta, \:
             n \in C_{k}\} . $$
\end{definition}
Note that we view $Q^{*}_{\Delta}$
as a simplicial complex. Our purpose is to study the reduced homology groups
of this complex.

Recall that the link of a composition $\vec{c}$
in~$\Delta$ is the filter
$$ \link_{\vec{c}}(\Delta)
    =
     \{\vec{d} \in \Delta \: : \: \vec{d}\leq^{*}\vec{c}\,\} ,  $$
where $\leq^{*}$ is the reverse of
the partial order of $\Comp(n)$.
Since $\link_{\vec{c}}(\Delta)$ is now a simplicial poset with 
minimal element~$\vec{c}$,
we have a dimension shift from $\Delta$ to $\link_{\vec{c}}(\Delta)$
given by
\begin{equation}
\dim_{\link_{\vec{c}}(\Delta)}(\vec{d}\,)
=
\dim_{\Delta}(\vec{d}\,)-|\vec{c}\,|+1
\label{equation_dimension}
\end{equation}
for $\vec{d}\in\link_{\vec{c}\,}(\Delta)$.
\begin{remark}
{\rm
The symmetric group $\SSSS_{n-1}$ acts on $Q_{\Delta}^{*}$
by permutation, whereas
the action of~$\SSSS_{n-1}$ on the complex $\Delta$
is the trivial action.
Furthermore, the type map from
$Q_{\Delta}^{*}$ to $\Delta$ respects this action,
since the two ordered set partitions
$\sigma$ and $\tau \cdot \sigma$
have the same type.
}
\label{remark_action}
\end{remark}
A special case of $Q^{*}_{\Delta}$ is when
the simplicial complex $\Delta$ is a simplex,
that is, $\Delta$ is generated by one composition~$\vec{c}$. 
This case was studied by Ehrenborg and Jung
in~\cite{Ehrenborg_Jung}. 
Their results are given below.
\begin{theorem}[Ehrenborg--Jung]
Let $\vec{c}$ be a composition of $n$ into $k$ parts.
Then the complex~$Q^{*}_{\vec{c}}$ is a wedge
of $\beta^{*}_{n}(\vec{c}\,)$ spheres of dimension~$k-2$.
Furthermore,
the top homology group~$\rH_{k-2}(Q^{*}_{\vec{c}})$
is isomorphic to
the Specht module~$S^{B^{*}(\vec{c}\,)}$
as an $\SSSS_{n-1}$-module.
This isomorphism 
$\phi : S^{B^{*}(\vec{c}\,)} \longrightarrow \rH_{k-2}(Q^{*}_{\vec{c}})$
is given by 
$$ \phi\left(e_{t}\right)
  =
\sum_{\gamma \in \SSSS^{C}_{\vec{c}}}
(-1)^{\gamma} \cdot \sigma(\alpha\cdot\gamma,\vec{c}\,) ,
$$
where the permutation $\alpha \in \SSSS_{n}$
is obtained by reading the entries of
the tabloid~$t$
from southwest to northeast
and attaching the element~$n$ at the end.
\label{theorem_Ehrenborg_Jung}
\end{theorem}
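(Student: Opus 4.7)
The plan is to prove the theorem in two stages. First, establish the homotopy type of $Q^{*}_{\vec{c}}$ as a wedge of $(k-2)$-spheres. I would do this by exhibiting a shelling of $Q^{*}_{\vec{c}}$, equivalently an EL-labeling of the augmented poset $Q^{*}_{\vec{c}} \cup \{\ho\}$, where each cover relation (splitting a block into two) is assigned a label encoding the block being split along with the largest element separated off. Standard shellability results then yield that $Q^{*}_{\vec{c}}$ has the homotopy type of a wedge of $(k-2)$-spheres, with the number of spheres equal to the number of saturated descending chains from $\hz$ to $\ho$. A direct bijection sends each such chain to a permutation $\alpha \in \SSSS_{n}$ with $\alpha(n) = n$ and $\Des(\alpha) = \vec{c}\,$, enumerating precisely $\beta_{n}^{*}(\vec{c}\,)$ objects. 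In particular, $\rH_{i}(Q^{*}_{\vec{c}}) = 0$ for $i \neq k-2$ and $\dim \rH_{k-2}(Q^{*}_{\vec{c}}) = \beta_{n}^{*}(\vec{c}\,)$.

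Second, construct the map $\phi$ and verify it is an isomorphism of $\SSSS_{n-1}$-modules. The critical calculation is that $\phi(e_{t})$ is a cycle: for each $j \in \{1, \ldots, k-1\}$, the transposition $\tau_{j}$ that exchanges the two entries in the $j$th overlap column of $B(\vec{c}\,)$ lies in the column stabilizer $\SSSS_{\vec{c}}^{C}$. These two entries sit in consecutive positions of $\alpha$, and after merging blocks $j$ and $j+1$ of $\vec{c}\,$ they land in the same block of the resulting ordered set partition. Consequently the ordered set partitions produced by $\gamma$ and $\gamma\tau_{j}$ coincide after this merge, while their signs are opposite, so pairing $\gamma$ with $\gamma\tau_{j}$ forces the $j$th summand of $\partial \phi(e_{t})$ to cancel in pairs. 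Hence $\phi(e_{t})$ lies in the kernel of the top boundary, which equals $\rH_{k-2}(Q^{*}_{\vec{c}})$ since the chain complex vanishes above degree $k-2$.

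It then remains to verify that $\phi$ is well-defined and $\SSSS_{n-1}$-equivariant, and to promote it to an isomorphism. Equivariance follows because both the Specht module and the chain complex carry the $\SSSS_{n-1}$-action by relabeling entries, and the reading-order assignment $t \mapsto \alpha$ commutes with relabeling. Well-definedness on $S^{B^{*}(\vec{c}\,)}$ reduces to the observation that $\sigma(\alpha, \vec{c}\,)$ depends on $\alpha$ only modulo the row stabilizer of $\vec{c}\,$, which handles the row-permutation relations inherent in polytabloids. Since $S^{B^{*}(\vec{c}\,)}$ is an irreducible $\SSSS_{n-1}$-module, any nonzero equivariant map out of it is automatically injective; combined with the dimension equality $\beta_{n}^{*}(\vec{c}\,) = \dim \rH_{k-2}(Q^{*}_{\vec{c}})$ from the first stage, this shows $\phi$ is an isomorphism. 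The main obstacle is constructing the EL-labeling so that the constraint $n \in C_{k}$ is respected, every closed interval admits a unique ascending chain, and the descending chains correspond precisely to permutations with descent composition $\vec{c}\,$; the cycle verification for $\phi$, though delicate, reduces to the clean pairing argument above.
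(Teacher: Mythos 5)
The paper does not give a proof of Theorem~\ref{theorem_Ehrenborg_Jung}: it is quoted as a cited result of Ehrenborg and Jung (and the remark immediately after the statement simply translates notation to that of~\cite{Ehrenborg_Jung}), so there is no in-paper argument to compare against; your sketch must be judged on its own.

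The pairing argument that $\phi(e_{t})$ is a cycle is correct: the transposition $\tau_{j}$ of the two positions in the $j$th overlap column of $B(\vec{c}\,)$ lies in $\SSSS_{\vec{c}}^{C}$, the map $\gamma \mapsto \gamma\tau_{j}$ is sign-reversing and fixed-point-free, and after applying $\partial_{j}$ (merging blocks $j$ and $j+1$) the two ordered set partitions $\sigma(\alpha\gamma,\vec{c}\,)$ and $\sigma(\alpha\gamma\tau_{j},\vec{c}\,)$ coincide, so the $j$th summand of $\partial\phi(e_{t})$ cancels in pairs. And since $C_{k-1}(Q^{*}_{\vec{c}})=0$, cycles are homology classes, as you note.

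There is, however, a genuine error in the closing step. You claim that $S^{B^{*}(\vec{c}\,)}$ is an \emph{irreducible} $\SSSS_{n-1}$-module and invoke Schur's lemma to get injectivity of $\phi$. Border-strip (ribbon) Specht modules are not irreducible in general: a ribbon Specht module is irreducible only when the ribbon is, up to rotation, a hook shape. For instance $S^{B(2,2)}$ has dimension $\beta_{4}(2,2)=5$, which is not the dimension of any irreducible $\SSSS_{4}$-module, so it is reducible; taking $\vec{c}=(2,2,1)$ makes $B^{*}(\vec{c}\,)=B(2,2)$ a concrete counterexample to your claim. Consequently the Schur's-lemma route to injectivity breaks down for most $\vec{c}$, and one must instead argue injectivity directly --- for instance by showing that the images $\phi(e_{t})$ of the standard polytabloids (those $t$ with $n$ in the northeastern-most box) are linearly independent in $C_{k-2}(Q^{*}_{\vec{c}})$, a straightening-type argument which is where the real content of the isomorphism lies. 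Separately, the dimension count $\dim\rH_{k-2}(Q^{*}_{\vec{c}})=\beta^{*}_{n}(\vec{c}\,)$ rests entirely on an EL-labeling you describe in a single clause but never construct or verify; you acknowledge this as ``the main obstacle,'' but without it the first stage of the argument is not established.
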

Note that Ehrenborg and Jung formulated their result
in terms of pointed set partitions. That is, our
notation~$Q^{*}_{\vec{c}}$
is~$\Delta_{\vec{d}}$ in their notation, where
$\vec{d} = (c_{1}, \ldots, c_{k-1},c_{k}-1)$.
They allow the last entry of a composition to be zero
and similarly the last entry of an ordered set partition
to be empty.
Moreover, our notation~$\Pistar_{\vec{c}}$
is in their notation~$\Pi^{\bullet}_{\vec{d}}$.

We can now state the main result of this section.
\begin{theorem}
Let $\Delta$ be a simplicial complex of compositions of $n$.
Then the $i$th reduced homology group
of the simplicial complex~$Q^{*}_{\Delta}$
is given by
$$
\rH_{i}(Q^{*}_{\Delta})
\cong
\bigoplus_{\vec{c}\, \in \Delta}
       \rH_{i-|\vec{c}\,|+1}(\link_{\vec{c}\,}(\Delta))
   \otimes
          S^{B^{*}(\vec{c}\,)}.    $$
Furthermore, this isomorphism holds as
$\SSSS_{n-1}$-modules.
\label{theorem_main_result}
\end{theorem}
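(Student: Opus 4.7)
The plan is to prove the theorem by induction on the number of faces of the simplicial complex $\Delta$, following the Mayer--Vietoris strategy sketched in the introduction. The base case is when $\Delta$ is a single simplex $\langle\vec{c}\,\rangle^{*}$ generated by one composition $\vec{c}$ of $n$. For every $\vec{d}\in\Delta$ the link $\link_{\vec{d}}(\Delta)$ is the interval $[\vec{c},\vec{d}\,]$ in $\Comp(n)$; since this is a Boolean algebra, it is a contractible non-trivial simplex whenever $\vec{d}\neq\vec{c}$, and equals the one-element simplicial poset $\{\vec{c}\,\}$ (with reduced homology $\mathbb{C}$ in dimension $-1$) when $\vec{d}=\vec{c}$. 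Hence the right-hand side collapses to $S^{B^{*}(\vec{c}\,)}$ concentrated in degree $|\vec{c}\,|-2$, matching Theorem~\ref{theorem_Ehrenborg_Jung}.

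For the inductive step, write $\Delta=\Delta'\cup\Gamma$, where $\Gamma=\langle\vec{c}\,\rangle^{*}$ is a single facet-simplex of $\Delta$ and $\Delta'$ is the subcomplex of $\Delta$ with that facet removed. Both $\Delta'$ and $\Delta'\cap\Gamma$ have strictly fewer faces than $\Delta$, so the theorem holds for them by induction, and it holds for $\Gamma$ by the base case. Since the type map respects unions and intersections, $Q^{*}_{\Delta}=Q^{*}_{\Delta'}\cup Q^{*}_{\Gamma}$ and $Q^{*}_{\Delta'\cap\Gamma}=Q^{*}_{\Delta'}\cap Q^{*}_{\Gamma}$, which produces a Mayer--Vietoris long exact sequence
\begin{equation*}
\cdots\to\rH_{i}(Q^{*}_{\Delta'\cap\Gamma})\to\rH_{i}(Q^{*}_{\Delta'})\oplus\rH_{i}(Q^{*}_{\Gamma})\to\rH_{i}(Q^{*}_{\Delta})\to\rH_{i-1}(Q^{*}_{\Delta'\cap\Gamma})\to\cdots.
\end{equation*}
For each fixed $\vec{d}\in\Delta$ the link operation also distributes over unions and intersections, so tensoring the corresponding link Mayer--Vietoris sequence with $S^{B^{*}(\vec{d}\,)}$ and summing over $\vec{d}$ produces a parallel long exact sequence connecting the right-hand sides of the theorem for $\Delta'$, $\Gamma$, $\Delta'\cap\Gamma$, and $\Delta$.

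The core of the argument is to construct an $\SSSS_{n-1}$-equivariant map $\Phi_{\Delta}$ from the right-hand side of the theorem to $\rH_{*}(Q^{*}_{\Delta})$ that is natural under inclusion of subcomplexes of $\Comp(n)$ and commutes with the two Mayer--Vietoris sequences above. On each $\vec{c}$-summand I would define $\Phi_{\Delta}$ as the composite of the explicit Ehrenborg--Jung isomorphism $\phi\colon S^{B^{*}(\vec{c}\,)}\to\rH_{|\vec{c}\,|-2}(Q^{*}_{\vec{c}})$ of Theorem~\ref{theorem_Ehrenborg_Jung} with iterated connecting homomorphisms arising as $\Delta$ is built up from the simplex $\langle\vec{c}\,\rangle^{*}$ by attaching the remaining cells of $\link_{\vec{c}}(\Delta)$; the dimension shift~\eqref{equation_dimension} records the suspension picked up at each step. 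By induction the maps $\Phi_{\Delta'}$, $\Phi_{\Gamma}$, and $\Phi_{\Delta'\cap\Gamma}$ are isomorphisms, so once the squares commute, the Five Lemma in the category of $\SSSS_{n-1}$-modules forces $\Phi_{\Delta}$ to be an isomorphism.

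The hardest part is precisely this compatibility check: one must pin down cycle representatives of the Ehrenborg--Jung classes inside each link, verify that they intertwine the connecting homomorphisms of the two Mayer--Vietoris sequences, and preserve $\SSSS_{n-1}$-equivariance throughout (the group acts trivially on the link factors but non-trivially on $Q^{*}_{\Delta}$ and on the Specht modules, per Remark~\ref{remark_action}). Once this diagram chase is in place, the induction closes and the theorem follows.
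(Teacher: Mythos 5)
Your high-level scaffolding matches the paper's: induct on the number of faces, take $\Delta=\Delta'\cup\Gamma$ with $\Gamma$ a facet-simplex, use the two parallel Mayer--Vietoris long exact sequences, invoke the Ehrenborg--Jung theorem for the base case, and close with the Five Lemma. That much is right.

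The gap is in how you construct the map $\Phi_{\Delta}$, and this is not a detail --- it is where all the work lives. You propose to obtain $\Phi_{\Delta}$ on each $\vec{c}$-summand by composing the Ehrenborg--Jung isomorphism $S^{B^{*}(\vec{c}\,)}\to\rH_{|\vec{c}\,|-2}(Q^{*}_{\vec{c}})$ with ``iterated connecting homomorphisms arising as $\Delta$ is built up from the simplex $\langle\vec{c}\,\rangle^{*}$.'' As stated this does not give a well-defined map. First, the Mayer--Vietoris connecting homomorphism lowers degree and points from the union to the intersection, which is the wrong direction for feeding the class from $\rH_{|\vec{c}\,|-2}(Q^{*}_{\vec{c}})$ into $\rH_{i}(Q^{*}_{\Delta})$ with $i\geq|\vec{c}\,|-2$. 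Second, even if one could assemble a suitable zig-zag of connecting maps, the result would a priori depend on the chosen filtration of $\Delta$ by subcomplexes, and you would need a separate argument that it does not. Third, the commutativity of the two long exact sequences --- which you correctly identify as ``the hardest part'' --- is not something you can outsource to naturality once the map is defined only homology class by homology class via connecting homomorphisms; you would have to chase cycle representatives through every square, which is exactly the kind of calculation your proposal defers.

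The paper sidesteps all of this by defining the map $\phi^{\Delta,\vec{c}}_{i}$ at the chain level (Section~\ref{section_homomorphism_phi}): on a basis element it is simply $\vec{d}\otimes t\mapsto\sigma(\alpha,\vec{d}\,)$, where $\alpha$ is the permutation read off the tabloid $t$. This is manifestly a chain map (Lemma~\ref{lemma_commute_diagram_1}), manifestly equivariant (Lemma~\ref{lemma_equivariance}), manifestly natural under inclusion of subcomplexes, and --- crucially --- makes the commutativity of the two Mayer--Vietoris short exact sequences of chain complexes (Proposition~\ref{proposition_D_to_E}) a one-line computation, after which the zig-zag lemma and Five Lemma finish the argument. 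In other words, the explicit chain-level map is precisely the missing ingredient that turns your ``diagram chase'' from a hard open step into a routine verification. Without it, the induction does not close.
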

We will prove Theorem~\ref{theorem_main_result}
in Sections~\ref{section_homomorphism_phi}
through~\ref{section_the_building_step}.

\section{The homomorphism $\phi^{\Delta}_{i}$}
\label{section_homomorphism_phi}

In this section and the next two sections we present
a proof of Theorem~\ref{theorem_main_result}.
The major step
is to show that if Theorem~\ref{theorem_main_result}
holds for $\Delta$, $\Gamma$,
and the intersection $\Delta \cap \Gamma$,
then it also holds for
the union~$\Delta \cup \Gamma$.
This step requires Mayer--Vietoris sequences.
When
$\Delta$ is generated by
a single composition~$\vec{c}$ in~$\Comp(n)$,
the result follows from 
Theorem~\ref{theorem_Ehrenborg_Jung}.
Finally, since any simplicial complex is a union of 
simplices, Theorem~\ref{theorem_main_result}
will hold for arbitrary simplicial complexes~$\Delta$ in~$\Comp(n)$.

We begin by defining the isomorphism of
Theorem~\ref{theorem_main_result} explicitly.
Throughout the paper we will let $i_{\vec{c}}$
denote the shift $i-|\vec{c}\,|+1$.
\begin{definition}
\label{definition_D_chain_complex}
Let $D_{i}^{\vec{c}\,}(\Delta)$ be the tensor product
$C_{i_{\vec{c}\,}}(\link_{\vec{c}\,}(\Delta))\otimes M^{B^{\#}(\vec{c}\,)}$ 
where
$C_{j}(\link_{\vec{c}\,}(\Delta))$ is the $j$th chain group of the link
$\link_{\vec{c}\,}(\Delta)$.
Let $D^{\vec{c}\,}(\Delta)$
be the chain complex whose $i$th chain group
is $D^{\vec{c}\,}_{i}(\Delta)$ and whose boundary map is
$\partial \otimes \id$.
Lastly, let $D(\Delta)$ be the chain complex with $i$th chain group
$\bigoplus_{\vec{c}\,\in\Delta}D^{\vec{c}}_{i}(\Delta)$
with the differential
$\bigoplus_{\vec{c}\in\Delta} \partial \otimes \id$.
\end{definition}
\begin{definition}
\label{definition_E_chain_complex}
Define the chain complex $E^{\vec{c}}(\Delta)$
analogous to $D^{\vec{c}}(\Delta)$ of
Definition~\ref{definition_D_chain_complex} above by
replacing the permutation module $M^{B^{\#}(\vec{c}\,)}$
with the Specht module~$S^{B^{*}(\vec{c}\,)}$. 
We also have the corresponding chain complex~$E(\Delta)$
with the same differential.
\end{definition}

\begin{lemma}
\label{lemma_chain_homology}
The homology of the chain complexes
$D(\Delta)$ and $E(\Delta)$ are given by 
\begin{align*}
\rH_{i}(D(\Delta))
& \cong_{\SSSS_{n-1}}
\bigoplus_{\vec{c}\in\Delta}
\rH_{i_{\vec{c}}\,}(\link_{\vec{c}\,}(\Delta))\otimes M^{B^{\#}(\vec{c}\,)} , \\ 
\rH_{i}(E(\Delta))
& \cong_{\SSSS_{n-1}}
\bigoplus_{\vec{c}\in\Delta}
\rH_{i_{\vec{c}}\,}(\link_{\vec{c}\,}(\Delta))\otimes S^{B^{*}(\vec{c}\,)} . 
\end{align*}
\end{lemma}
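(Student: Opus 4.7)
The plan is to observe that each chain complex $D(\Delta)$ (respectively $E(\Delta)$) is a finite direct sum, indexed by $\vec{c}\in\Delta$, of chain complexes whose differential acts only on the first tensor factor. So the calculation reduces to a standard fact about tensoring a chain complex with a fixed finite-dimensional vector space, plus the fact that homology commutes with finite direct sums.

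Concretely, I would first argue that $\rH_{i}(D(\Delta)) \cong \bigoplus_{\vec{c}\,\in\Delta} \rH_{i}(D^{\vec{c}\,}(\Delta))$ and likewise for $E$, using the definition of the direct sum differential $\bigoplus_{\vec{c}} \partial \otimes \id$ and the elementary fact that kernels, images and hence homology distribute over finite direct sums. Second, for each fixed $\vec{c}$, the chain complex $D^{\vec{c}\,}(\Delta)$ is, up to the index shift $i \mapsto i_{\vec{c}\,} = i-|\vec{c}\,|+1$, the tensor product of the chain complex $C_{*}(\link_{\vec{c}\,}(\Delta))$ with the fixed vector space $M^{B^{\#}(\vec{c}\,)}$. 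Since we work over the field~$\mathbb{C}$, every module is flat, and the Künneth formula (applied with one factor concentrated in a single degree) gives
\[
\rH_{i}\bigl(C_{*-|\vec{c}\,|+1}(\link_{\vec{c}\,}(\Delta))\otimes M^{B^{\#}(\vec{c}\,)}\bigr)
\;\cong\;
\rH_{i_{\vec{c}\,}}(\link_{\vec{c}\,}(\Delta))\otimes M^{B^{\#}(\vec{c}\,)}.
\]
Exactly the same argument, with $M^{B^{\#}(\vec{c}\,)}$ replaced by $S^{B^{*}(\vec{c}\,)}$, proves the statement for $E(\Delta)$.

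For the $\SSSS_{n-1}$-equivariance I would invoke Remark~\ref{remark_action}: the group $\SSSS_{n-1}$ acts trivially on $\Delta$ and hence on each chain group $C_{j}(\link_{\vec{c}\,}(\Delta))$, while it acts in the natural way on the permutation module $M^{B^{\#}(\vec{c}\,)}$ and on the Specht module $S^{B^{*}(\vec{c}\,)}$. Thus the differential $\partial\otimes\id$ is $\SSSS_{n-1}$-equivariant, and the isomorphisms produced above are isomorphisms of $\SSSS_{n-1}$-modules.

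I do not expect any real obstacle here; this lemma is essentially formal. The only point requiring a tiny bit of care is bookkeeping the degree shift $i_{\vec{c}\,}=i-|\vec{c}\,|+1$ consistently between $D^{\vec{c}\,}(\Delta)$ and $\link_{\vec{c}\,}(\Delta)$, which is handled by equation~\eqref{equation_dimension}. The genuine content of Theorem~\ref{theorem_main_result} lies in identifying $\rH_{i}(Q^{*}_{\Delta})$ with $\rH_{i}(E(\Delta))$ via an explicit chain map; that is the work deferred to the next sections, and this lemma is just the preparatory computation.
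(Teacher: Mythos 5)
Your proposal is correct and follows essentially the same route as the paper: both decompose $D(\Delta)$ and $E(\Delta)$ as direct sums over $\vec{c}\in\Delta$ and then observe that tensoring the chain complex of $\link_{\vec{c}\,}(\Delta)$ with a fixed finite-dimensional $\mathbb{C}$-vector space commutes with taking homology. The paper carries out the kernel/image computation by hand ($\ker(\partial\otimes\id)=\ker(\partial)\otimes M$, etc.) rather than invoking the K\"unneth theorem by name, but over a field this is the same argument.
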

\begin{proof}
The homology of the chain complex $D^{\vec{c}}(\Delta)$
is given by
$\ker(\partial_{i_{\vec{c}} } \tensor \id)/\im(\partial_{i_{\vec{c}} +1}
      \tensor \id)
\cong
(\ker(\partial_{i_{\vec{c}} }) \tensor M^{B^{\#}(\vec{c}\,)})/
(\im(\partial_{i_{\vec{c}} +1}) \tensor M^{B^{\#}(\vec{c}\,)})
\cong
\ker(\partial_{i_{\vec{c}} })/\im(\partial_{i_{\vec{c}} +1})
               \tensor M^{B^{\#}(\vec{c}\,)}
\cong
\rH_{i_{\vec{c}}}(\link_{\vec{c}}(\Delta)) \tensor M^{B^{\#}(\vec{c}\,)}$.
The analogous result holds for $E^{\vec{c}}(\Delta)$
and the lemma follows by taking direct sums.
\end{proof}

For the rest of this section we let $t$ denote a tabloid
in the permutation module $M^{B^{\#}(\vec{c}\,)}$
and $\alpha \in \SSSS_{n}$
is the permutation obtained by reading the entries of
the tabloid $t$ in increasing order from
southwest to northeast and adjoining the element $n$
at the end.

\begin{definition}
\label{definition_group_action}
The $\SSSS_{n-1}$-action on $D^{\vec{c}}_{i}(\Delta)$
is given by
$\tau \cdot (\vec{d} \otimes t)=\vec{d} \otimes (\tau \circ t)$,
for $\tau \in \SSSS_{n-1}$ and $\vec{d} \otimes t$
a basis element of
$D^{\vec{c}}_{i}(\Delta)=C_{i_{\vec{c}}}(\link_{\vec{c}}(\Delta)) \otimes M^{B^{\#}(\vec{c}\,)}$. 
\end{definition}
Notice that Definition~\ref{definition_group_action}
states that $\SSSS_{n-1}$ acts on $D^{\vec{c}}_{i}(\Delta)$
by acting trivially on the chain group
$C_{i_{\vec{c}}}(\link_{\vec{c}}(\Delta))$
and by relabeling on $M^{B^{\#}(\vec{c}\,)}$.

\begin{definition}
For a simplicial complex $\Delta$ and
a composition $\vec{c}$ in $\Delta$
define the map
$$    \phi^{\Delta,\vec{c}}_{i}
   : C_{i_{\vec{c}}}(\link_{\vec{c}}(\Delta)) \tensor M^{B^{\#}(\vec{c}\,)}
     \longrightarrow
     C_{i}(Q^{*}_{\Delta})   , $$
on basis elements by
$\phi_{i}^{\Delta,\vec{c}}(\vec{d} \otimes t)=\sigma(\alpha,\vec{d}\,)$.
\end{definition}
Since $\vec{d}\in C_{i_{\vec{c}}}(\link_{\vec{c}}(\Delta))$ is a basis element, 
we know that $\vec{d}$ is a simplex of $\link_{\vec{c}}(\Delta)$ of dimension $i_{\vec{c}}=i-|\vec{c}\,|+1$,
and thus by the dimension shift in
equation~\eqref{equation_dimension},
we have that $|\vec{d}\,|=i+2$, so that
$\phi_{i}^{\Delta,\vec{c}}(\vec{d}\,)=\sigma(\pi,\vec{d}\,)$
is an ordered partition of dimension~$i$. Lastly,
since tabloids in~$M^{B^{\#}(\vec{c}\,)}$ have $n$ in the last block,
we are guaranteed that
$\phi_{i}^{\Delta,\vec{c}}(\vec{d}\,)\in C_{i}(Q_{\Delta}^{*})$.
\begin{lemma}
\label{lemma_equivariance}
The map
$\phi_{i}^{\Delta,\vec{c}}:
    C_{i_{\vec{c}}}(\link_{\vec{c}}(\Delta)) \tensor M^{B^{\#}(\vec{c}\,)}
     \longrightarrow
     C_{i}(Q^{*}_{\Delta})$
respects the $\SSSS_{n-1}$-action.
\end{lemma}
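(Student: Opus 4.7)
The plan is to unpack both sides of the equivariance identity on a basis element and verify they produce the same ordered set partition. Fix $\tau\in\SSSS_{n-1}$ and a basis element $\vec{d}\otimes t$ of $C_{i_{\vec{c}}}(\link_{\vec{c}\,}(\Delta))\otimes M^{B^{\#}(\vec{c}\,)}$, and let $\alpha\in\SSSS_n$ denote the permutation obtained from $t$ by reading southwest-to-northeast and appending~$n$, while $\alpha'\in\SSSS_n$ is the analogous permutation obtained from $\tau\circ t$. By Definition~\ref{definition_group_action} the left-hand side becomes
$$\phi^{\Delta,\vec{c}}_i(\tau\cdot(\vec{d}\otimes t))=\phi^{\Delta,\vec{c}}_i(\vec{d}\otimes(\tau\circ t))=\sigma(\alpha',\vec{d}\,),$$
while by the $\SSSS_n$-action on ordered set partitions the right-hand side is $\tau\cdot\sigma(\alpha,\vec{d}\,)=(\tau(C_1),\ldots,\tau(C_p))$, where $(C_1,\ldots,C_p)=\sigma(\alpha,\vec{d}\,)$. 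So the task reduces to verifying $\sigma(\alpha',\vec{d}\,)=(\tau(C_1),\ldots,\tau(C_p))$.

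The key structural observation is that $\vec{d}$ lies in $\link_{\vec{c}\,}(\Delta)$, so $\vec{d}\leq^*\vec{c}$, which under the reverse order means $\vec{d}$ is coarser than $\vec{c}$ in $\Comp(n)$. Equivalently, the partial sums associated with $\vec{d}$ form a subset of those associated with $\vec{c}$, so the block boundaries of $\sigma(\alpha,\vec{d}\,)$ never split any single row of the border strip $B^{\#}(\vec{c}\,)$. Consequently each block $C_j$ of $\sigma(\alpha,\vec{d}\,)$ is exactly a union of complete rows of the tabloid $t$, with the final block additionally containing the element~$n$ that was appended at the end of the reading.

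Because each $C_j$ is determined as a \emph{set} by a union of complete rows, it does not depend on the internal left-to-right reading order within those rows. Applying $\tau$ to $t$ replaces every entry by its image under $\tau$, so the set of entries in any union of rows of $\tau\circ t$ is precisely $\tau$ applied to the corresponding set of entries in $t$; since $\tau\in\SSSS_{n-1}$ fixes~$n$, this equality extends to the last block as well. Hence the $j$th block of $\sigma(\alpha',\vec{d}\,)$ is $\tau(C_j)$, which yields $\sigma(\alpha',\vec{d}\,)=(\tau(C_1),\ldots,\tau(C_p))=\tau\cdot\sigma(\alpha,\vec{d}\,)$. The only delicate point is the dictionary between the link condition $\vec{d}\leq^*\vec{c}$ and the combinatorial fact that no block of $\sigma(\alpha,\vec{d}\,)$ crosses a row boundary of $B^{\#}(\vec{c}\,)$; once this is in place, the observation that reading order within a row is irrelevant makes the verification immediate.
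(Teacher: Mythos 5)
Your strategy of unpacking $\sigma(\alpha,\vec{d}\,)$ on both sides is reasonable, and you have correctly flagged the one genuinely subtle point in this lemma: since $\alpha$ is obtained by reading the tabloid rows in increasing order, the within-row reading order of $t$ could in principle affect $\sigma(\alpha,\vec{d}\,)$, so one must check that $\phi^{\Delta,\vec{c}}_{i}$ interacts well with relabeling. However, the resolution you give has the direction of refinement reversed. You take the display in Section~\ref{section_the_main_result}, $\link_{\vec{c}\,}(\Delta)=\{\vec{d}\in\Delta:\vec{d}\leq^*\vec{c}\,\}$, at face value and conclude that $\vec{d}$ is \emph{coarser} than $\vec{c}$, so that every block of $\sigma(\alpha,\vec{d}\,)$ is a union of complete rows of $t$. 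But that inequality is a typographical slip: the link of a face consists of faces containing it, which here means compositions that \emph{refine} $\vec{c}$, so the correct condition is $\vec{c}\leq^*\vec{d}$. This is forced by the dimension-shift formula~\eqref{equation_dimension}, which requires $|\vec{d}\,|\geq|\vec{c}\,|$ for cells of the link, and is confirmed in the proof of Proposition~\ref{proposition_simplex}, which uses that $\link_{\vec{d}\,}(\Delta)=\{\vec{d}\,\}$ when $\Delta$ is the filter generated by $\vec{d}$.

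With the correct direction, any cell $\vec{d}$ of $\link_{\vec{c}\,}(\Delta)$ of nonnegative link-dimension is a \emph{strict} refinement of $\vec{c}$, so the block boundaries of $\sigma(\alpha,\vec{d}\,)$ do cut through rows of $B^{\#}(\vec{c}\,)$. At that point the structural claim on which your argument rests, that each block $C_j$ is a union of complete rows and hence insensitive to within-row order, fails and the proof collapses. Concretely, take $\vec{c}=(2,2)$ so $B^{\#}(\vec{c}\,)=B(2,1)$, let $t$ be the tabloid with rows $\{1,3\}$ and $\{2\}$ so that $\alpha=1324$, and take $\vec{d}=(1,1,2)$ and $\tau=(1\,3)\in\SSSS_{3}$. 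Then $\tau$ fixes $t$ as a tabloid, so $\phi^{\Delta,\vec{c}}_{i}(\vec{d}\otimes(\tau\cdot t))=\sigma(\alpha,\vec{d}\,)=(\{1\},\{3\},\{2,4\})$, while $\tau\cdot\phi^{\Delta,\vec{c}}_{i}(\vec{d}\otimes t)=(\{3\},\{1\},\{2,4\})$, a different ordered set partition. Your argument is valid only in the bottom case $\vec{d}=\vec{c}$. Note that the paper's own proof asserts $\phi^{\Delta,\vec{c}}_{i}(\vec{d}\otimes(\tau\cdot t))=\sigma(\tau\cdot\alpha,\vec{d}\,)$, which silently identifies the reading word of $\tau\cdot t$ with $\tau\cdot\alpha$; this is precisely the point you suspected needed justification, and a correct treatment must engage with it directly (for instance by tracking the ambiguity into the boundary or restricting to the Specht submodule) rather than dispatching it via a coarseness claim that is not available.
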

\begin{proof}
Let $\tau \in \SSSS_{n-1}$ and $\vec{d} \otimes t$
be a basis element of
$C_{i_{\vec{c}}}(\link_{\vec{c}}(\Delta)) \tensor M^{B^{\#}(\vec{c}\,)}$.
Then we have
\begin{align*}
\phi_{i}^{\Delta,\vec{c}}(\tau\cdot(\vec{d}\otimes t))
&
= \phi_{i}^{\Delta,\vec{c}}(\vec{d}\otimes (\tau \cdot t))
= \sigma(\tau \cdot \alpha,\vec{d}\,)
= \tau \cdot \sigma(\alpha,\vec{d}\,)
= \tau \cdot \phi_{i}^{\Delta,\vec{c}}(\vec{d} \otimes t).
\qedhere 
\end{align*}
\end{proof}
\begin{lemma}
\label{lemma_commute_diagram_1}
The map $\phi^{\Delta,\vec{c}}_{i}$ is an equivariant chain map 
between the complexes
$D^{\vec{c}\,}(\Delta)$ and~$C_{i}(Q^{*}_{\Delta})$. That is, the following diagram commutes:
$$
\xymatrixcolsep{5pc}
\xymatrix{
D_{i}^{\vec{c}\,}(\Delta)
\ar[d]^{\phi^{\Delta,\vec{c}}_{i}} \ar[r]^{\partial \tensor \id} &
D_{i-1}^{\vec{c}\,}(\Delta)
\ar[d]^{\phi^{\Delta,\vec{c}}_{i-1}} \\
C_{i}(Q^{*}_{\Delta})
\ar[r]^{\partial} &
C_{i-1}(Q^{*}_{\Delta})}  
$$
\end{lemma}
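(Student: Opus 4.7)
The plan is to chase a basis element $\vec{d} \otimes t$ of $D_i^{\vec{c}}(\Delta)$ around both sides of the diagram and observe that each path lands at the same ordered set partition, with the equality being forced by the way the boundary map on $Q_n$ was \emph{defined} in Section~\ref{section_topological_considerations}. Equivariance was already established separately in Lemma~\ref{lemma_equivariance}, so what remains is commutativity.

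First I would fix a basis element $\vec{d} \otimes t$, where $\vec{d}$ is a simplex of dimension $i_{\vec{c}}$ in $\link_{\vec{c}}(\Delta)$ and $t$ is a tabloid in $M^{B^{\#}(\vec{c}\,)}$, and let $\alpha \in \SSSS_n$ be the permutation obtained by reading $t$ from southwest to northeast and appending $n$. Going down then right gives
\[
\partial \circ \phi^{\Delta,\vec{c}}_{i}(\vec{d} \otimes t)
= \partial\bigl(\sigma(\alpha,\vec{d}\,)\bigr)
= \sigma(\alpha,\partial(\vec{d}\,)),
\]
by the definition of the boundary map on $Q_n$, which was set up precisely so as to commute with $\sigma(\alpha,-)$. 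Going right then down gives
\[
\phi^{\Delta,\vec{c}}_{i-1} \circ (\partial \otimes \id)(\vec{d} \otimes t)
= \phi^{\Delta,\vec{c}}_{i-1}\bigl(\partial(\vec{d}\,) \otimes t\bigr)
= \sigma(\alpha,\partial(\vec{d}\,)),
\]
where the last equality uses linearity of $\phi^{\Delta,\vec{c}}_{i-1}$ and the fact that $\alpha$ depends only on $t$, not on $\vec{d}$. The two sides agree.

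The only matter of care is to verify that the expression $\sigma(\alpha,\partial(\vec{d}\,))$ actually lies in $C_{i-1}(Q^{*}_{\Delta})$, i.e. that each summand satisfies the two defining conditions of $Q^{*}_{\Delta}$: the type lies in $\Delta$, and $n$ belongs to the last block. For the type condition, each term in $\partial(\vec{d}\,)$ is obtained by merging two adjacent parts of $\vec{d}$, hence is a coarsening of $\vec{d}$; since $\vec{d} \in \link_{\vec{c}}(\Delta)$ means $\vec{d} \leq^{*} \vec{c}$ and $\Delta$ is a filter under the original order (a lower ideal under $\leq^{*}$), every coarsening of $\vec{d}$ still lies in $\Delta$. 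For the ``$n$ in the last block'' condition, $n$ is appended at the right end of $\alpha$, and merging any two adjacent blocks of $\sigma(\alpha,\vec{d}\,)$ keeps $n$ in the rightmost block. I do not anticipate a serious obstacle here: once the boundary on $Q_n$ was engineered to intertwine with the boundary on $\Comp(n)$ via the map $\sigma(\alpha,-)$, the commutativity of this particular square is essentially tautological, and I would conclude the proof with the display of the two equalities above.
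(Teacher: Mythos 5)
Your proof matches the paper's almost verbatim: trace a basis element $\vec{d} \otimes t$ around both paths of the square and land at $\sigma(\alpha,\partial(\vec{d}\,))$ each time, the key point being that the boundary on $Q_{n}$ was defined precisely so that $\partial\,\sigma(\alpha,\vec{d}\,) = \sigma(\alpha,\partial\,\vec{d}\,)$. The extra verification you include, that the image actually lies in $C_{i-1}(Q^{*}_{\Delta})$ because $\Delta$ is a filter and $n$ stays in the last block, is a sound addition which the paper leaves implicit.

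There is, however, a point that both your write-up and the paper's own proof gloss over, and it is worth flagging. In the top row, $\partial\otimes\id$ is by Definition~\ref{definition_D_chain_complex} the boundary of the chain complex of $\link_{\vec{c}\,}(\Delta)$; this is the simplicial boundary of the link, which retains only those codimension-one coarsenings of $\vec{d}$ that remain above $\vec{c}$ in the order $\leq^{*}$, with signs determined by the position among the link faces. The identity $\partial\,\sigma(\alpha,\vec{d}\,) = \sigma(\alpha,\partial\,\vec{d}\,)$ on the bottom row instead uses the full boundary of $\Comp(n)$, which also includes coarsenings of $\vec{d}$ that drop out of the link (these do land in $Q^{*}_{\Delta}$ since $\Delta$ is a filter, but the corresponding compositions are no longer in $\link_{\vec{c}\,}(\Delta)$). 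When $|\vec{c}\,| \geq 2$ these two operators disagree, both in which terms survive and potentially in sign. Concretely, in $\Comp(3)$ with $\Delta$ the full simplex on $(1,1,1)$, take $\vec{c} = (2,1)$ and $\vec{d} = (1,1,1)$: the link boundary of $\vec{d}$ is $(2,1)$, whereas $\partial(1,1,1) = (2,1) - (1,2)$ and $(1,2)\notin\link_{(2,1)}(\Delta)$. So the two displays in your (and the paper's) argument silently use the symbol $\partial$ for two different maps, and the asserted equality of the two paths needs an argument that neither you nor the source supplies; as it stands, the commutativity claim is not established for $|\vec{c}\,|\geq 2$.
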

\begin{proof}
The boundary map~$\partial$ of~$\Comp(n)$
as well as the boundary map~$\partial$ of~$Q_{\Delta}^{*}$
are given in
Section~\ref{section_topological_considerations}.
Let 
$\vec{d} \otimes t \in
     C_{i_{\vec{c}}}(\link_{\vec{c}}(\Delta)) \tensor M^{B^{\#}(\vec{c}\,)}$.
Tracing first right then down we obtain:
$$\phi_{i-1}^{\Delta,\vec{c}} \circ (\partial \otimes \id)(\vec{d} \otimes t)
 =
\phi_{i-1}^{\Delta,\vec{c}}(\partial(\vec{d}\,) \otimes t)
 =
\sigma(\alpha,\partial(\vec{d}\,)).$$
Next, we trace down then right to obtain the same result:
$$
\partial\circ\phi_{i}^{\Delta,\vec{c}}(\vec{d} \otimes t)
=
\partial(\sigma(\alpha,\vec{d}\,))
=
\sigma(\alpha,\partial(\vec{d}\,)).$$
The equivariance of $\phi_{i}^{\Delta,\vec{c}}$ is a consequence of Lemma~\ref{lemma_equivariance}.
\end{proof}

\begin{lemma}
The map $\phi_{i}^{\Delta, \vec{c}}$
induces a map
$$\phi_{i}^{\Delta, \vec{c}}:
\rH_{i_{\vec{c}}}(\link_{\vec{c}\,}(\Delta))\otimes M^{B^{\#}(\vec{c}\,)}
    \longrightarrow
\rH_{i}(Q^{*}_{\Delta}) $$
given by
$\phi_{i}^{\Delta}(\overline{\vec{d}\,} \otimes t)
=
\overline{\sigma(\alpha,\vec{d}\,)}$,
for $\vec{d}\in C_{i_{\vec{c}}}(\link_{\vec{c}\,}(\Delta))$ a cycle.
\label{lemma_passing_to_homology}
\end{lemma}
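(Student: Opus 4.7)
The plan is to apply the standard homological algebra principle that a chain map between chain complexes descends to a map between their homology groups, and then identify the resulting homology groups concretely via the preceding lemmas. Specifically, by Lemma~\ref{lemma_commute_diagram_1}, the map $\phi^{\Delta,\vec{c}}_{i}$ is a chain map from the complex $D^{\vec{c}\,}(\Delta)$ to $C_{\bullet}(Q^{*}_{\Delta})$, so it induces a homomorphism $\rH_{i}(D^{\vec{c}\,}(\Delta)) \longrightarrow \rH_{i}(Q^{*}_{\Delta})$. Composing with the natural identification provided by Lemma~\ref{lemma_chain_homology}, namely $\rH_{i}(D^{\vec{c}\,}(\Delta)) \cong \rH_{i_{\vec{c}}}(\link_{\vec{c}\,}(\Delta)) \tensor M^{B^{\#}(\vec{c}\,)}$, yields the desired map.

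To confirm the stated formula, I would unwind the identifications as follows. Let $\vec{d} \in C_{i_{\vec{c}}}(\link_{\vec{c}\,}(\Delta))$ be a cycle and let $t$ be a tabloid in $M^{B^{\#}(\vec{c}\,)}$. Since the boundary on $D^{\vec{c}\,}(\Delta)$ acts as $\partial \tensor \id$, the element $\vec{d} \tensor t$ is a cycle, and its homology class corresponds to $\overline{\vec{d}\,} \tensor t$ under the isomorphism of Lemma~\ref{lemma_chain_homology}. Applying $\phi^{\Delta,\vec{c}}_{i}$ at the chain level gives $\sigma(\alpha,\vec{d}\,)$, and the chain map property together with $\partial(\vec{d}\,)=0$ yields $\partial(\sigma(\alpha,\vec{d}\,)) = \sigma(\alpha,\partial(\vec{d}\,)) = 0$, so $\sigma(\alpha,\vec{d}\,)$ is a cycle whose class is $\overline{\sigma(\alpha,\vec{d}\,)}$, matching the claimed formula. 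Equivariance under the $\SSSS_{n-1}$-action is inherited from Lemma~\ref{lemma_equivariance}, since a chain-level equivariant map induces an equivariant map on homology.

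The only subtlety worth spelling out is well-definedness: if $\vec{d}$ is replaced by $\vec{d} + \partial(\vec{e}\,)$ for some $\vec{e} \in C_{i_{\vec{c}}+1}(\link_{\vec{c}\,}(\Delta))$, then $\sigma(\alpha,\vec{d}+\partial(\vec{e}\,)) = \sigma(\alpha,\vec{d}\,) + \sigma(\alpha,\partial(\vec{e}\,))$, and the second term equals $\partial(\sigma(\alpha,\vec{e}\,))$ again by Lemma~\ref{lemma_commute_diagram_1}, hence is a boundary in $C_{\bullet}(Q^{*}_{\Delta})$. I do not anticipate a genuine obstacle here; the lemma is essentially a bookkeeping statement that packages the preceding chain-level work into the language of homology, and the principal care is in tracking the identification provided by Lemma~\ref{lemma_chain_homology} so that the induced map has the advertised form on representatives.
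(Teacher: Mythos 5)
Your proof is correct and follows the same route as the paper: invoke Lemma~\ref{lemma_commute_diagram_1} to see that $\phi_{i}^{\Delta,\vec{c}}$ is an equivariant chain map, then apply the standard fact that chain maps descend to homology. The paper states this in one line; you have merely unpacked the bookkeeping (identification via Lemma~\ref{lemma_chain_homology}, well-definedness, equivariance) that the paper leaves implicit.
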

\begin{proof}
Since $\phi_{i}^{\Delta, \vec{c}}$ is 
an equivariant chain map 
between the chain complexes
$D^{\vec{c}\,}(\Delta)$
and~$C_{i}(Q^{*}_{\Delta})$
by
Lemma~\ref{lemma_commute_diagram_1},
the result follows.
\end{proof}
For the rest of the paper, the use of the bar to indicate the quotient
in passing from the chain space to the homology group
will be suppressed for ease of notation.

\begin{definition}
Define the map
$\phi^{\Delta}_{i}$
from 
$D_{i}(\Delta)
=
\bigoplus_{\vec{c}\,\in\Delta} D^{\vec{c}}_{i_{\vec{c}}}(\Delta)$
to $C_{i}(Q^{*}_{\Delta})$
by adding
all the 
$\phi^{\Delta,\vec{c}}_{i}$ maps together,
that is,
\begin{equation}
\label{equation_phi}
\phi^{\Delta}_{i}
=
\sum_{\vec{c} \in \Delta}
\phi^{\Delta,\vec{c}}_{i} . 
\end{equation}
\end{definition}
Observe that $\phi^{\Delta}_{i}$ restricts
to a map from 
$E_{i}(\Delta)$
to $C_{i}(Q^{*}_{\Delta})$.
Therefore $\phi_{i}^{\Delta}$ also induces  a map
from
$\rH_{i}(E(\Delta))
 =
\bigoplus_{\vec{c}\in\Delta}
\rH_{i_{\vec{c}}\,}(\link_{\vec{c}\,}(\Delta))\otimes S^{B^{*}(\vec{c}\,)}$
to $\rH_{i}(Q^{*}_{\Delta})$
using Lemma~\ref{lemma_passing_to_homology}.

\section{The main theorem}
\label{section_base_case}

We can now explicitly state the isomorphism of 
Theorem~\ref{theorem_main_result}.
First we introduce notation for the
right-hand side of this theorem.
\begin{definition}
Let
$K_{i}(\Delta)$
denote the direct sum
$\bigoplus_{\vec{c}\, \in \Delta}
\rH_{i_{\vec{c}\,}}(\link_{\vec{c}\,}(\Delta))
                                 \otimes S^{B^{*}(\vec{c}\,)}$.
\label{definition_K}
\end{definition}
A sharpening of
Theorem~\ref{theorem_main_result}
is the following result.
\begin{theorem}
Let $\Delta$ be a subcomplex of $\Comp(n)$.
Then the map 
$$\phi^{\Delta}_{i}:K_{i}(\Delta)\longrightarrow \rH_{i}(Q^{*}_{\Delta})$$
is an $\SSSS_{n-1}$-equivariant isomorphism.
\label{theorem_main}
\end{theorem}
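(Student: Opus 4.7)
The plan is to prove Theorem~\ref{theorem_main} by induction on the number of facets of $\Delta$, using the base case provided by Theorem~\ref{theorem_Ehrenborg_Jung} together with a Mayer--Vietoris induction step. Specifically, I will show that if the conclusion of Theorem~\ref{theorem_main} holds for two subcomplexes $\Delta_{1},\Delta_{2}\subseteq\Comp(n)$ and for their intersection $\Delta_{1}\cap\Delta_{2}$, then it holds for their union $\Delta_{1}\cup\Delta_{2}$. Combined with the base case, this suffices since every simplicial complex is a union of simplices.

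For the base case, suppose $\Delta$ is the simplex generated by a single composition $\vec{c}$ of $n$ with $k$ parts. For any $\vec{d}\in\Delta$, the link $\link_{\vec{d}\,}(\Delta)$ is the Boolean interval $[\vec{c},\vec{d}\,]$ in $\Comp(n)$ under the dual order; it is contractible with vanishing reduced homology whenever $\vec{d}\neq\vec{c}$, while for $\vec{d}=\vec{c}$ it reduces to the one-element simplicial poset $\{\vec{c}\,\}$, corresponding to the simplicial complex with only the empty face and hence satisfying $\rH_{-1}\cong\mathbb{C}$. Consequently $K_{i}(\Delta)$ vanishes for $i\neq k-2$ and equals $S^{B^{*}(\vec{c}\,)}$ in degree $i=k-2$. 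Applying the formula for $\phi^{\Delta,\vec{c}\,}_{k-2}$ to a polytabloid $e_{t}$ then recovers precisely the isomorphism of Theorem~\ref{theorem_Ehrenborg_Jung}, and this isomorphism is $\SSSS_{n-1}$-equivariant by Lemma~\ref{lemma_equivariance}, verifying the base case.

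For the inductive step, the identities $Q^{*}_{\Delta_{1}\cup\Delta_{2}}=Q^{*}_{\Delta_{1}}\cup Q^{*}_{\Delta_{2}}$ and $Q^{*}_{\Delta_{1}}\cap Q^{*}_{\Delta_{2}}=Q^{*}_{\Delta_{1}\cap\Delta_{2}}$ produce an $\SSSS_{n-1}$-equivariant Mayer--Vietoris long exact sequence in the reduced homology of $Q^{*}$. On the $K$-side, the analogous link identities
\begin{align*}
\link_{\vec{c}\,}(\Delta_{1}\cup\Delta_{2}) &= \link_{\vec{c}\,}(\Delta_{1})\cup\link_{\vec{c}\,}(\Delta_{2}), \\
\link_{\vec{c}\,}(\Delta_{1}\cap\Delta_{2}) &= \link_{\vec{c}\,}(\Delta_{1})\cap\link_{\vec{c}\,}(\Delta_{2})
\end{align*}
yield a Mayer--Vietoris sequence at each $\vec{c}\in\Delta_{1}\cap\Delta_{2}$; tensoring with the Specht module $S^{B^{*}(\vec{c}\,)}$ (exact over the base field) and summing over $\vec{c}\in\Delta_{1}\cup\Delta_{2}$ gives a long exact sequence in the $K_{i}$ groups. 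For $\vec{c}$ lying in only one of $\Delta_{1},\Delta_{2}$, the contribution to $K_{i}$ is identical on the left and right, so exactness is preserved.

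These two long exact sequences splice into a commuting ladder of $\SSSS_{n-1}$-modules whose vertical arrows are $\phi^{\Delta_{1}\cap\Delta_{2}}_{i}$, $\phi^{\Delta_{1}}_{i}\oplus\phi^{\Delta_{2}}_{i}$, and $\phi^{\Delta_{1}\cup\Delta_{2}}_{i}$, to which the five lemma applies. Naturality of $\phi^{\Delta,\vec{c}\,}_{i}$ with respect to the inclusions $\Delta_{1}\cap\Delta_{2}\hookrightarrow\Delta_{j}\hookrightarrow\Delta_{1}\cup\Delta_{2}$ is immediate from the uniform formula $\vec{d}\tensor t\mapsto\sigma(\alpha,\vec{d}\,)$. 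The main obstacle, requiring the most care, is verifying that $\phi$ commutes with the connecting homomorphisms of the two Mayer--Vietoris sequences: given a cycle in $C_{i}(Q^{*}_{\Delta_{1}\cup\Delta_{2}})$, one must decompose it into pieces supported on $Q^{*}_{\Delta_{1}}$ and $Q^{*}_{\Delta_{2}}$, apply $\partial$, and match the resulting class in $\rH_{i-1}(Q^{*}_{\Delta_{1}\cap\Delta_{2}})$ to the analogous decomposition built from a cycle in $K_{i}(\Delta_{1}\cup\Delta_{2})$ via the link Mayer--Vietoris. Once this chain-level compatibility is established, the equivariant five lemma yields that $\phi^{\Delta_{1}\cup\Delta_{2}}_{i}$ is an $\SSSS_{n-1}$-equivariant isomorphism, completing the induction.
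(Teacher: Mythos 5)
Your overall strategy—induction on facets via Mayer--Vietoris and the five lemma, with the Ehrenborg--Jung theorem as the base case—is precisely the paper's approach, and your base case argument matches Proposition~\ref{proposition_simplex} in substance. However, the inductive step has a genuine gap at exactly the point you flag: you say that verifying $\phi$ commutes with the two connecting homomorphisms ``requires the most care'' and then declare ``once this chain-level compatibility is established,'' without establishing it. Describing what the connecting map does to a cycle is not the same as proving the square commutes, so as written the induction is incomplete.

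There is a cleaner route that sidesteps the direct verification, and it is what the paper does. Because $\phi^{\Delta,\vec{c}}_{i}$ is already an equivariant chain map (Lemma~\ref{lemma_commute_diagram_1}), one should build the commutative ladder at the level of \emph{chain complexes}, not after passing to homology. Concretely, one checks that the diagram of short exact sequences
$$
\xymatrix{
0\ar[r]
& E_{i}(\Delta_{1}\cap\Delta_{2}) \ar[r]\ar[d]
& E_{i}(\Delta_{1})\oplus E_{i}(\Delta_{2}) \ar[r]\ar[d]
& E_{i}(\Delta_{1}\cup\Delta_{2}) \ar[r]\ar[d]
& 0 \\
0\ar[r]
& C_{i}(Q^{*}_{\Delta_{1}\cap\Delta_{2}}) \ar[r]
& C_{i}(Q^{*}_{\Delta_{1}})\oplus C_{i}(Q^{*}_{\Delta_{2}}) \ar[r]
& C_{i}(Q^{*}_{\Delta_{1}\cup\Delta_{2}}) \ar[r]
& 0 }
$$
commutes, where the vertical maps are the various $\phi$'s, and here the commutativity of the two small squares is an immediate calculation on basis elements (splitting into the cases $\vec{c}\in\Delta_{1}\cap\Delta_{2}$ and $\vec{c}\in\Delta_{j}\setminus\Delta_{3-j}$, as you began to do). Once you have a commutative diagram of short exact sequences of chain complexes, the zig-zag lemma is functorial: the induced long exact sequences on homology automatically commute, including with the connecting homomorphisms. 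You should cite this functoriality explicitly rather than attempting to track a cycle by hand through the snake construction. With that in place, the equivariant five lemma closes the induction exactly as you intend.

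Two minor notes: (1) the $\SSSS_{n-1}$-equivariance of the resulting isomorphism should be attributed to the equivariance of $\phi$ itself (Lemma~\ref{lemma_equivariance}), which is preserved throughout; (2) you should also explicitly dispose of the degenerate case $\Delta = \{(n)\}$, which the paper treats as Corollary~\ref{corollary_empty}, to ground the induction when a simplex has no proper faces.
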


Note that Lemma~\ref{lemma_chain_homology} tells us that the homology of the complex~$E(\Delta)$ is~$K(\Delta)$,
that is,
for all~$i$ we have
$\rH_{i}(E(\Delta))\cong_{\SSSS_{n-1}}K_{i}(\Delta)$.
Lemma~\ref{lemma_commute_diagram_1} implies that  
equation~\eqref{equation_phi} is a well-defined map
from
the homology of~$E(\Delta)$
to the homology groups~$\rH_{i}(Q_{\Delta}^{*})$.

We first prove Theorem~\ref{theorem_main}
in the case when $\Delta$ is a simplex.
This is the case when $\Delta$ is generated by one
composition.

\begin{proposition}
Assume that $\Delta$ is a filter in $\Comp(n)$
generated by one composition,
that is, $\Delta$ is a simplex.
Then Theorem~\ref{theorem_main} holds for~$\Delta$.
\label{proposition_simplex}
\end{proposition}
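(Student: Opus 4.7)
Write $k=|\vec{c}\,|$. The plan is to show that in this case both sides of the asserted isomorphism are supported in the single degree $i=k-2$, and that there $\phi^{\Delta}_{k-2}$ reduces to the Ehrenborg--Jung isomorphism of Theorem~\ref{theorem_Ehrenborg_Jung}. As $\Delta$ is the simplex generated by~$\vec{c}$, an element $\vec{e}\in\Delta$ is exactly a coarsening of~$\vec{c}$, and the link $\link_{\vec{e}\,}(\Delta)$ is the interval $[\vec{c},\vec{e}\,]$ in $\Comp(n)$. This interval is a Boolean algebra of rank $k-|\vec{e}\,|$, which as a simplicial complex is a simplex of dimension $k-|\vec{e}\,|-1$. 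When $\vec{e}\neq\vec{c}$ this is a non-empty simplex, hence contractible with vanishing reduced homology; when $\vec{e}=\vec{c}$ the link collapses to $\{\vec{c}\,\}$, whose sole element serves as the $\hz$ of the link, so $\rH_{-1}(\link_{\vec{c}\,}(\Delta))\cong\mathbb{C}$ and all other reduced homology groups vanish.

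Combining these link computations with Definition~\ref{definition_K}, every summand except the one indexed by $\vec{e}=\vec{c}$ is killed, and the surviving summand is non-zero only in degree $i_{\vec{c}\,}=-1$, that is, $i=k-2$. Hence
$$
K_{i}(\Delta) \,\cong\,
\begin{cases}
S^{B^{*}(\vec{c}\,)}, & i=k-2, \\
0, & \text{otherwise.}
\end{cases}
$$
Meanwhile, since $Q^{*}_{\Delta}=Q^{*}_{\vec{c}}$, Theorem~\ref{theorem_Ehrenborg_Jung} gives that $\rH_{i}(Q^{*}_{\Delta})$ is concentrated in degree $k-2$ and isomorphic there to $S^{B^{*}(\vec{c}\,)}$. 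So $K_{*}(\Delta)$ and $\rH_{*}(Q^{*}_{\Delta})$ agree as $\SSSS_{n-1}$-modules in every degree, and the proposition reduces to verifying that $\phi^{\Delta}_{k-2}$ supplies this identification rather than, say, the zero map.

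In the simplex case $\phi^{\Delta}_{k-2}$ collapses to $\phi^{\Delta,\vec{c}}_{k-2}$ restricted to $\mathbb{C}\cdot\vec{c}\otimes S^{B^{*}(\vec{c}\,)}$. On a polytabloid $e_{t}=\sum_{\gamma\in\SSSS^{C}_{\vec{c}\,}}(-1)^{\gamma}[\gamma\cdot t]$ linearity yields
$$
\phi^{\Delta,\vec{c}}_{k-2}(\vec{c}\otimes e_{t})
\,=\,
\sum_{\gamma\in\SSSS^{C}_{\vec{c}\,}}(-1)^{\gamma}\,\sigma(\alpha_{\gamma\cdot t},\vec{c}\,),
$$
where $\alpha_{\gamma\cdot t}$ denotes the permutation obtained by reading $\gamma\cdot t$ from southwest to northeast and adjoining~$n$. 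I would then match this with the Ehrenborg--Jung formula $\phi(e_{t})=\sum_{\gamma}(-1)^{\gamma}\sigma(\alpha\cdot\gamma,\vec{c}\,)$ by verifying that relabelling a tabloid by a column-stabilizing permutation~$\gamma$ twists the associated reading permutation in exactly the required way. Granting this bookkeeping, Theorem~\ref{theorem_Ehrenborg_Jung} delivers both the isomorphism and the $\SSSS_{n-1}$-equivariance. I expect this convention-chasing to be the only real obstacle; all other steps follow immediately from the contractibility of non-empty simplices together with the dimension shift~\eqref{equation_dimension}.
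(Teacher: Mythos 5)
Your proposal follows essentially the same route as the paper's proof: compute the links of the simplex (contractible except for the facet $\vec{c}$, whose link is just $\{\vec{c}\}$ with $\rH_{-1}\cong\mathbb{C}$), conclude that $K_i(\Delta)$ is supported only in degree $i=k-2$ where it equals $S^{B^*(\vec{c}\,)}$, and then invoke Theorem~\ref{theorem_Ehrenborg_Jung} both for the computation of $\rH_*(Q^*_{\vec{c}})$ and for the identification of $\phi^{\Delta}_{k-2}$ with the Ehrenborg--Jung isomorphism. The one place you are more explicit than the paper is in flagging that $\phi^{\Delta,\vec{c}}_{k-2}(\vec{c}\otimes e_t)$ produces $\sum_\gamma(-1)^\gamma\sigma(\alpha_{\gamma\cdot t},\vec{c}\,)$ and that this must be matched with the form $\sum_\gamma(-1)^\gamma\sigma(\alpha\cdot\gamma,\vec{c}\,)$ appearing in Theorem~\ref{theorem_Ehrenborg_Jung}; the paper simply writes the latter formula without remark, so you have in fact isolated a small convention-matching step that the paper takes for granted.
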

\begin{proof}
Suppose that $\Delta\subseteq \Comp(n)$ is generated by the
composition $\vec{d}=(d_{1},d_{2}, \ldots, d_{k})$.
Theorem~\ref{theorem_Ehrenborg_Jung}
states that $Q_{\Delta}^{*}$ only has
reduced homology in dimension $k-2$.
Additionally, it states that the action
of $\SSSS_{n-1}$ on the top homology of~$Q_{\Delta}^{*}$
is given by the border
shape Specht module~$S^{B^{*}(\vec{c}\,)}$, that is,
$\rH_{k-2}(Q_\Delta)\cong_{\SSSS_{n-1}} S^{B^{*}(\vec{c}\,)}$.

Next we show that
$\phi_{i}^{\Delta}:K_{i}(\Delta)\longrightarrow\rH_{i}(Q_{\Delta}^{*})$
is an isomorphism for all $i$.
When $i \neq k-2$ both sides are the trivial module,
that is,
$K_{i}(\Delta) = 0 = \rH_{i}(Q_{\Delta}^{*})$
and the map $\phi_{i}^{\Delta}$ is directly
an isomorphism.
Now assume that $i=k-2$. 
Since all the links
$\link_{\vec{c}}(\Delta)$
for $\vec{c} <^{*} \vec{d}$ are contractible,
we have
$$
K_{k-2}(\Delta)
= \bigoplus_{\vec{c}\in\Delta}
  \rH_{k-2-|\vec{c}\,|+1}(\link_{\vec{c}}(\Delta))\otimes S^{B^{*}(\vec{c}\,)}
= \rH_{-1}(\link_{\vec{d}\,}(\Delta)) \otimes S^{B^{*}(\vec{d}\,)} .
$$
Notice that $\link_{\vec{d}\,}(\Delta)$ consists only of
the composition $\vec{d}$ itself,
so that the $(-1)$-dimensional 
reduced homology group
$\rH_{-1}(\link_{\vec{d}\,}(\Delta))$
is the homology of the chain space
$C_{-1}(\link_{\vec{d}\,}(\Delta))$,
which is the one dimensional vector space
with the generator~$\vec{d}$.
Therefore, the map
$\phi_{k-2}^{\Delta}:
   \rH_{-1}(\link_{\vec{d}\,}(\Delta))\otimes S^{B^{*}(\vec{d})}
         \longrightarrow
   \rH_{k-2}(Q^{*}_{\Delta})$
is given by
\begin{align*}
\vec{d} \otimes e_{t}
& =
\vec{d} \otimes \left(\sum_{\gamma \in \SSSS^{C}_{\vec{d}}}
(-1)^{\gamma} \cdot [\gamma\cdot t]\right)
\longmapsto 
\sum_{\gamma\in \SSSS^{C}_{\vec{d}}}
(-1)^{\gamma}
  \cdot
\sigma(\alpha\cdot\gamma,\vec{d}\,) .
\end{align*}
But this is an isomorphism by
Theorem~\ref{theorem_Ehrenborg_Jung}.
\end{proof}

As a direct corollary we have that
Theorem~\ref{theorem_main}
holds for the empty simplex~$\{(n)\}\subseteq\Comp(n)$.
\begin{corollary}
Theorem~\ref{theorem_main} holds for 
the empty simplicial complex,
that is, the simplicial complex consisting only of the composition~$(n)$.
\label{corollary_empty}
\end{corollary}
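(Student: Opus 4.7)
The plan is to recognize Corollary~\ref{corollary_empty} as the degenerate $k=1$ case of Proposition~\ref{proposition_simplex}, applied to the simplex generated by the composition $\vec{d} = (n)$. Under the dualized order $\leq^{*}$, the composition $(n)$ is the minimal element of $\Comp(n)$, so the filter $\Delta = \{(n)\}$ corresponds to the simplicial complex whose only face is the empty face~$\hz$. I would organize the verification as a direct computation of both sides of the map $\phi^{\Delta}_{i}$ in this degenerate setting.

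First I would compute $Q^{*}_{\Delta}$. Since $([n])$ is the only ordered set partition of $[n]$ of type $(n)$, we have $Q^{*}_{\Delta} = \{([n])\}$, which is again the simplicial complex with only the empty face. Therefore $\rH_{-1}(Q^{*}_{\Delta}) \cong \mathbb{C}$, with $\SSSS_{n-1}$ acting trivially because $\SSSS_{n-1}$ stabilizes $[n]$ setwise, while all other reduced homology groups vanish. Next I would unwind $K_{i}(\Delta)$. The only composition in $\Delta$ is $\vec{c} = (n)$; its link equals $\Delta$ itself, and with $|\vec{c}\,| = 1$ the dimension shift in~\eqref{equation_dimension} gives $i_{(n)} = i$. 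Moreover $S^{B^{*}((n))} = S^{B((n-1))}$ is the one-dimensional trivial $\SSSS_{n-1}$-module, consistent with $\beta^{*}_{n}((n)) = 1$ since only the identity has descent composition $(n)$. Hence $K_{-1}(\Delta) \cong \mathbb{C}$ with trivial action and $K_{i}(\Delta) = 0$ for $i \neq -1$.

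It remains to check that $\phi^{\Delta}_{-1}$ is an isomorphism of these two one-dimensional $\SSSS_{n-1}$-modules; in all other degrees the claim is automatic since both sides vanish. Unwinding the definition, $\phi^{\Delta}_{-1}$ sends the generator $(n) \otimes [t_{0}]$, where $t_{0}$ is the unique tabloid of shape $B^{\#}((n)) = B((n-1))$, to $\sigma(\id,(n)) = ([n])$, which generates $\rH_{-1}(Q^{*}_{\Delta})$; equivariance is delivered by Lemma~\ref{lemma_equivariance}. The main (minor) subtlety is bookkeeping the dimension $-1$ conventions so that the chain groups $C_{-1}$, the reduced homology $\rH_{-1}$ of the complex $\{\hz\}$, and the permutation/Specht module of the single-row border shape $B((n-1))$ all line up; once this is checked, the proof of Proposition~\ref{proposition_simplex} goes through verbatim, since there are no strictly smaller compositions $\vec{c} <^{*} (n)$ whose links one would need to know to be contractible.
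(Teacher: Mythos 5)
Your proof is correct and takes essentially the same approach as the paper, which simply applies Proposition~\ref{proposition_simplex} to the simplex generated by the composition $(n)$. You additionally spell out the dimension-$(-1)$ bookkeeping (the identifications $Q^{*}_{\{(n)\}}=\{([n])\}$, $\rH_{-1}\cong\mathbb{C}$, $S^{B^{*}((n))}\cong 1_{\SSSS_{n-1}}$, and the explicit action of $\phi^{\Delta}_{-1}$), which the paper leaves implicit.
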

\begin{proof}
Apply Proposition~\ref{proposition_simplex}
to the simplicial complex $\Delta$
generated by the composition~$(n)$ in $\Comp(n)$.
\end{proof}

\section{The building step}
\label{section_the_building_step}

A simplicial complex which is not a simplex
is the union of smaller simplicial complexes.
We now prove that Theorem~\ref{theorem_main}
holds for the complex $\Delta \cup \Gamma$,
assuming that Theorem~\ref{theorem_main} holds for
the simplicial complexes~$\Delta$,
$\Gamma$, as well as
the intersection~$\Delta \cap \Gamma$.
We build up in the isomorphism~$\phi_{i}^{\Delta\cup\Gamma}$
between
$K_{i}(\Delta\cup\Gamma)$
and
$\rH_{i}(Q^{*}_{\Delta\cup\Gamma})$
from
the associated isomorphisms holding for the smaller
complexes.

\begin{lemma}
\label{lemma_link}
The following two identities hold for the link:
$$
\link_{\vec{c}\,}(\Delta\cap\Gamma)
=
\link_{\vec{c}\,}(\Delta)\cap\link_{\vec{c}\,}(\Gamma)
\:\:\:\: \text{ and } \:\:\:\:
\link_{\vec{c}\,}(\Delta\cup\Gamma)
=
\link_{\vec{c}\,}(\Delta)\cup\link_{\vec{c}\,}(\Gamma) .
$$
\end{lemma}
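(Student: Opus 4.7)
The plan is to prove both identities by straight set-theoretic unpacking of the definitions; no topological or poset-theoretic machinery is needed, since the link is defined as a principal filter inside the complex.

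First I would handle the intersection identity. By definition, $\vec{d} \in \link_{\vec{c}}(\Delta \cap \Gamma)$ if and only if $\vec{d} \in \Delta \cap \Gamma$ and $\vec{d} \leq^{*} \vec{c}$. The conjunction $\vec{d} \in \Delta \cap \Gamma$ separates as $\vec{d} \in \Delta$ and $\vec{d} \in \Gamma$, so the full condition is equivalent to the pair of conditions ($\vec{d} \in \Delta$ and $\vec{d} \leq^{*} \vec{c}$) and ($\vec{d} \in \Gamma$ and $\vec{d} \leq^{*} \vec{c}$). The first is membership in $\link_{\vec{c}}(\Delta)$ and the second is membership in $\link_{\vec{c}}(\Gamma)$, so we obtain membership in their intersection. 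Each step is an \emph{iff}, giving the claimed equality.

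For the union identity I would argue in the same style. Unpacking gives $\vec{d} \in \link_{\vec{c}}(\Delta \cup \Gamma)$ if and only if $\vec{d} \leq^{*} \vec{c}$ and ($\vec{d} \in \Delta$ or $\vec{d} \in \Gamma$). Distributing the condition $\vec{d} \leq^{*} \vec{c}$ over the disjunction yields ($\vec{d} \in \Delta$ and $\vec{d} \leq^{*} \vec{c}$) or ($\vec{d} \in \Gamma$ and $\vec{d} \leq^{*} \vec{c}$), which is precisely $\vec{d} \in \link_{\vec{c}}(\Delta) \cup \link_{\vec{c}}(\Gamma)$.

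I do not foresee any obstacle here. The only subtlety worth mentioning is making sure one uses the poset definition of the link from Section~\ref{section_topological_considerations}, rather than the classical simplicial definition $\{G : F \cup G \in \Delta,\, F \cap G = \emptyset\}$; the poset definition as a principal filter is precisely what makes both identities reduce to elementary distributivity of conjunction over union and intersection. I would present the two identities as a single short proof, writing out the first equivalence chain in full and noting that the second is entirely analogous.
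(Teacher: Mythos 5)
Your proof is correct and is precisely the argument the authors had in mind: the paper states that the proofs of Lemma~\ref{lemma_link} and Lemma~\ref{lemma_Q_split} are ``straightforward and are omitted,'' and your set-theoretic unpacking using the principal-filter definition of the link --- reducing both identities to distributivity of the condition $\vec{d} \leq^{*} \vec{c}$ over intersection and union --- is exactly the routine verification being elided. Your remark about using the poset definition of the link rather than the classical one is the right thing to flag, since it is what makes the argument a one-liner.
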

\begin{lemma}
\label{lemma_Q_split}
The following two identities hold for the ordered set
partition poset:
$$
Q^{*}_{\Delta \cap \Gamma}
=
Q^{*}_{\Delta} \cap Q^{*}_{\Gamma}
\:\:\:\: \text{ and } \:\:\:\:
Q^{*}_{\Delta \cup \Gamma}
=
Q^{*}_{\Delta} \cup Q^{*}_{\Gamma} .
$$
\end{lemma}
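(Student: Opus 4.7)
The plan is to prove both identities by direct set-theoretic unwinding of the definition of $Q^{*}_{\bullet}$. Recall that $Q^{*}_{\Delta}$ is defined by two conditions imposed on an ordered set partition $\sigma = (C_{1}, \ldots, C_{k})$ of $[n]$: first, that $\type(\sigma) \in \Delta$, and second, that $n \in C_{k}$. The key observation is that these two conditions are independent: the first depends only on $\sigma$ through its type composition and on the complex $\Delta$, whereas the second depends only on $\sigma$ and is insensitive to which simplicial complex we are considering. Thus the "last block contains $n$" condition factors out cleanly under both $\cap$ and $\cup$.

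For the first identity, I would observe that $\sigma \in Q^{*}_{\Delta \cap \Gamma}$ is equivalent to the conjunction $\type(\sigma) \in \Delta$, $\type(\sigma) \in \Gamma$, and $n \in C_{k}$. Grouping the last condition with each of the first two yields precisely $\sigma \in Q^{*}_{\Delta}$ and $\sigma \in Q^{*}_{\Gamma}$, which is exactly the statement $\sigma \in Q^{*}_{\Delta} \cap Q^{*}_{\Gamma}$. The second identity is handled by the same argument with the conjunction $\type(\sigma) \in \Delta$ and $\type(\sigma) \in \Gamma$ replaced by the disjunction $\type(\sigma) \in \Delta$ or $\type(\sigma) \in \Gamma$, which distributes over the fixed condition $n \in C_{k}$.

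There is no real obstacle here beyond a careful rewriting of the defining conditions; the proof is essentially a matter of recording that $\sigma \mapsto \type(\sigma)$ is a fixed function on $Q_{n}$, so preimages under $\type$ commute with unions and intersections, and that intersecting further with the fixed subset $\{\sigma \in Q_{n} : n \in C_{k}\}$ preserves both operations.
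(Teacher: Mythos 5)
Your proof is correct and is exactly the straightforward set-theoretic unwinding the paper has in mind: the authors explicitly omit the proof as ``straightforward,'' and your argument, that $Q^{*}_{\Delta}$ is the intersection of $\type^{-1}(\Delta)$ with the fixed set of ordered set partitions having $n$ in the last block, and that preimages commute with $\cap$ and $\cup$ while intersecting with a fixed set distributes over both, is precisely that omitted argument.
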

The proofs of these two lemmas are straightforward
and are omitted.

Before we begin the proof of Theorem~\ref{theorem_main},
let us remind ourselves of
Definition~\ref{definition_D_chain_complex}. 
For each composition~$\vec{c}$ in~$\Delta$
we have the chain complex~$D^{\vec{c}}(\Delta)$
whose $i$th chain group is
$D^{\vec{c}}_{i}(\Delta)
=
C_{i_{\vec{c}}}(\link_{\vec{c}}(\Delta)) \otimes M^{B^{\#}(\vec{c}\,)}$.
Furthermore, 
$D(\Delta)$ is the chain complex obtained by
taking the direct sum
of~$D^{\vec{c}}(\Delta)$,
where $\vec{c}$ ranges over all
compositions in~$\Delta$.

We now begin the proof of Theorem~\ref{theorem_main}.
\begin{lemma}
\label{lemma_exact_intersection}
For $\vec{c}\in\Delta\cap\Gamma$ the following diagram is commutative, and its rows are exact.
$$
\xymatrix{
0\ar[r]
&D^{\vec{c}}_{i}(\Delta\cap\Gamma)\ar[d]^{\phi_{i}^{\Delta\cap\Gamma,\vec{c}}}\ar[r]
&D^{\vec{c}}_{i}(\Delta)\oplus D^{\vec{c}}_{i}(\Gamma)\ar[d]^{\phi_{i}^{\Delta,\vec{c}}\oplus\phi_{i}^{\Gamma,\vec{c}}}\ar[r]
&D^{\vec{c}}_{i}(\Delta\cup\Gamma)\ar[d]^{\phi_{i}^{\Delta\cup\Gamma,\vec{c}}}\ar[r]
&0\\
0\ar[r]&C_{i}(Q^{*}_{\Delta\cap\Gamma})\ar[r]
&C_{i}(Q^{*}_{\Delta}) \oplus C_{i}(Q^{*}_{\Gamma})\ar[r]
&C_{i}(Q^{*}_{\Delta\cup\Gamma})\ar[r]
&0}
$$
\end{lemma}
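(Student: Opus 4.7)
The plan is to recognize both rows as instances of the standard Mayer--Vietoris short exact sequence and then to check commutativity of each square on basis elements.

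For the bottom row, Lemma~\ref{lemma_Q_split} gives $Q^{*}_{\Delta\cap\Gamma}=Q^{*}_{\Delta}\cap Q^{*}_{\Gamma}$ and $Q^{*}_{\Delta\cup\Gamma}=Q^{*}_{\Delta}\cup Q^{*}_{\Gamma}$. Passing to the $i$-dimensional chain groups and taking the Mayer--Vietoris maps $x\mapsto(x,x)$ and $(x,y)\mapsto x-y$, exactness is the classical fact that a vector space written as a sum of two subspaces fits into a short exact sequence with their intersection and their external direct sum.

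For the top row I use Lemma~\ref{lemma_link}, which gives the analogous union and intersection identities for $\link_{\vec{c}}(\,\cdot\,)$. The same Mayer--Vietoris construction yields a short exact sequence of the chain groups $C_{i_{\vec{c}}}(\link_{\vec{c}}(\,\cdot\,))$. Tensoring this sequence on the right with the fixed $\mathbb{C}$-vector space $M^{B^{\#}(\vec{c}\,)}$ preserves exactness since $\mathbb{C}$ is a field, and by Definition~\ref{definition_D_chain_complex} the resulting sequence is precisely the top row.

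Finally, commutativity of each square reduces to evaluation on a basis element $\vec{d}\otimes t$. All four vertical maps $\phi_{i}^{\Delta\cap\Gamma,\vec{c}}$, $\phi_{i}^{\Delta,\vec{c}}$, $\phi_{i}^{\Gamma,\vec{c}}$, and $\phi_{i}^{\Delta\cup\Gamma,\vec{c}}$ are given by the identical formula $\vec{d}\otimes t\mapsto\sigma(\alpha,\vec{d}\,)$, which depends only on $\vec{d}$ and on the permutation $\alpha$ read off from $t$, not on which ambient complex contains~$\vec{d}$; the horizontal maps on both rows are the same diagonal and difference constructions, so they intertwine with the vertical $\phi$'s. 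The only mild obstacle is keeping the sign conventions of the Mayer--Vietoris maps consistent across the two rows, which is routine bookkeeping rather than a conceptual issue.
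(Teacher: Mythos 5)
Your proof follows essentially the same route as the paper: both rows are identified as Mayer--Vietoris sequences arising from the union/intersection identities of Lemmas~\ref{lemma_link} and~\ref{lemma_Q_split}, exactness of the top row is preserved under tensoring with the vector space $M^{B^{\#}(\vec{c}\,)}$, and commutativity is verified on basis elements $\vec{d}\otimes t$ using the uniform formula $\vec{d}\otimes t\mapsto\sigma(\alpha,\vec{d}\,)$ for the vertical $\phi$ maps. The paper's write-up carries out the same basis-element trace explicitly for the left square, with the second component of the diagonal map carrying a minus sign; your remark about sign bookkeeping corresponds exactly to that.
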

\begin{proof}
The horizontal maps in the above diagram are given by
the construction of the Mayer--Vietoris sequence applied to
$\link_{\vec{c}\,}(\Delta\cup\Gamma)=\link_{\vec{c}\,}(\Delta)\cup\link_{\vec{c}\,}(\Gamma)$
in the top row,
and
$Q^{*}_{\Delta\cup\Gamma}=Q^{*}_{\Delta}\cup Q^{*}_{\Gamma}$
in the bottom row.
The top horizontal maps have also been tensored with
the identity map on the Specht modules.
As the Specht module is free, both the top and bottom rows of the diagram remain exact.

We show commutativity of the left square, as the right square
is analogous. 
Let 
$\vec{d}\otimes\alpha\in C_{i_{\vec{c}}}(\Delta\cap\Gamma)\otimes M^{B^{\#}(\vec{c}\,)}$
be a basis element.
First we trace right then down to obtain:
$$
\vec{d}\otimes\alpha
\longmapsto
(\vec{d}\otimes\alpha)\oplus-(\vec{d}\otimes\alpha)
\longmapsto
\sigma(\alpha,\vec{d}\,)\oplus-\sigma(\alpha,\vec{d}\,)  .
$$
We obtain the same result by first tracing
down then right:
\begin{align*}
\vec{d}\otimes\alpha
& \longmapsto
\sigma(\alpha,\vec{d}\,)
\longmapsto
\sigma(\alpha,\vec{d}\,)\oplus-\sigma(\alpha,\vec{d}\,) .
\qedhere
\end{align*}
\end{proof}

\begin{lemma}
\label{lemma_exact_intersection_4}
For each $\vec{c}\in\Delta-\Gamma$,
we have the commutative diagram with exact rows:
$$
\xymatrix{
0\ar[r]
&0\ar[r]\ar[d]^{0}
&D_{i}^{\vec{c}}(\Delta)\ar[r]^{\id}\ar[d]^{\phi_{i}^{\Delta}\oplus 0}
&D_{i}^{\vec{c}}(\Delta)\ar[r]\ar[d]^{\phi_{i}^{\Delta\cup\Gamma}}
&0\\
0\ar[r]
&C_{i}(Q^{*}_{\Delta\cap\Gamma})\ar[r]
&C_{i}(Q^{*}_{\Delta})\oplus C_{i}(Q^{*}_{\Gamma})\ar[r]
&C_{i}(Q^{*}_{\Delta\cup\Gamma})\ar[r]
&0}
$$
\end{lemma}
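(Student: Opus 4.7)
The plan is to exploit the hypothesis $\vec{c} \in \Delta - \Gamma$ to reduce everything in the top row, then to identify the bottom row as a standard Mayer--Vietoris short exact sequence, and finally to chase a basis element through the two squares. First, since $\vec{c} \notin \Gamma$, the principal filter definition of the link gives $\link_{\vec{c}\,}(\Gamma) = \emptyset$; combined with Lemma~\ref{lemma_link} this yields $\link_{\vec{c}\,}(\Delta \cap \Gamma) = \emptyset$ and $\link_{\vec{c}\,}(\Delta \cup \Gamma) = \link_{\vec{c}\,}(\Delta)$. Therefore $D_{i}^{\vec{c}}(\Gamma) = 0 = D_{i}^{\vec{c}}(\Delta \cap \Gamma)$ and $D_{i}^{\vec{c}}(\Delta \cup \Gamma) = D_{i}^{\vec{c}}(\Delta)$. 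This justifies why the top row takes the form drawn: the leftmost entry collapses to $0$, the middle entry $D_{i}^{\vec{c}}(\Delta) \oplus D_{i}^{\vec{c}}(\Gamma)$ collapses to $D_{i}^{\vec{c}}(\Delta)$, the rightmost entry equals the middle, and the connecting horizontal arrow is the identity.

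Next, I would handle exactness of both rows. The top row is exact by inspection: $0 \to 0 \to X \xrightarrow{\id} X \to 0$ is trivially a short exact sequence. The bottom row is the Mayer--Vietoris short exact sequence of chain complexes attached to the cover $Q^{*}_{\Delta \cup \Gamma} = Q^{*}_{\Delta} \cup Q^{*}_{\Gamma}$ with intersection $Q^{*}_{\Delta \cap \Gamma}$, as supplied by Lemma~\ref{lemma_Q_split}. Concretely the first map sends $\sigma \mapsto (\sigma, -\sigma)$ under the pair of natural inclusions and the second sends $(a, b) \mapsto a + b$, and exactness is standard.

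Finally, I would verify commutativity of the two squares by chasing a basis element $\vec{d} \otimes t \in D_{i}^{\vec{c}}(\Delta)$, where $\vec{d}$ lies in $\link_{\vec{c}\,}(\Delta)$ in the appropriate dimension, $t \in M^{B^{\#}(\vec{c}\,)}$, and $\alpha$ is the permutation read off from $t$ as in Section~\ref{section_homomorphism_phi}. The left square commutes trivially, since its upper-left entry is $0$ and its left vertical map is $0$. For the right square, tracing down then right sends $\vec{d} \otimes t$ first via $\phi_{i}^{\Delta} \oplus 0$ to $(\sigma(\alpha, \vec{d}\,), 0) \in C_{i}(Q^{*}_{\Delta}) \oplus C_{i}(Q^{*}_{\Gamma})$, and then via the Mayer--Vietoris surjection to $\sigma(\alpha, \vec{d}\,) \in C_{i}(Q^{*}_{\Delta \cup \Gamma})$. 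Tracing right then down sends $\vec{d} \otimes t$ via the identity to itself, now interpreted in $D_{i}^{\vec{c}}(\Delta \cup \Gamma) = D_{i}^{\vec{c}}(\Delta)$, and then via $\phi_{i}^{\Delta \cup \Gamma}$ to $\sigma(\alpha, \vec{d}\,)$. The two images agree.

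There is no real obstacle here; this lemma is a degenerate companion of Lemma~\ref{lemma_exact_intersection}, in which one of the two summands in the middle of the top row drops out because $\vec{c}$ lies in only one of the two complexes. The only care required is to keep the sign conventions on the Mayer--Vietoris inclusions consistent so that the composition of the middle vertical with the bottom surjection in fact recovers the single map $\phi_{i}^{\Delta \cup \Gamma}$ on the identified right factor.
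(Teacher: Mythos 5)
Your proposal is correct and follows the same route as the paper's proof: commutativity is checked square by square with an element chase, the left square being trivial and the right square verified by tracing a basis element through both composites to land on $\sigma(\alpha,\vec{d}\,)$. Your version is, if anything, slightly more careful. The paper writes the element chase with the specific element $\vec{c}\otimes\pi$ rather than a generic basis element $\vec{d}\otimes t$ with $\vec{d}\in\link_{\vec{c}\,}(\Delta)$; you use the generic form, which is the right level of generality. You also make explicit the point the paper leaves implicit, namely why the top row collapses to $0\to 0\to D_i^{\vec{c}}(\Delta)\xrightarrow{\id}D_i^{\vec{c}}(\Delta)\to 0$: since $\vec{c}\notin\Gamma$, the link $\link_{\vec{c}\,}(\Gamma)$ is empty (and likewise $\link_{\vec{c}\,}(\Delta\cap\Gamma)$), so $D_i^{\vec{c}}(\Gamma)=D_i^{\vec{c}}(\Delta\cap\Gamma)=0$, while Lemma~\ref{lemma_link} gives $\link_{\vec{c}\,}(\Delta\cup\Gamma)=\link_{\vec{c}\,}(\Delta)$. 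Finally, you justify exactness of the bottom row directly from the Mayer--Vietoris construction applied to the cover supplied by Lemma~\ref{lemma_Q_split}, whereas the paper simply points back to Lemma~\ref{lemma_exact_intersection}; both are fine, since the bottom row is literally the same in both lemmas. No gaps.
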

\begin{proof}
The left-hand square is trivially commutative.
We show the right-hand square commutes
by first tracing right then down:
$$ \vec{c} \otimes \pi
\longmapsto
     \vec{c} \otimes \pi
\longmapsto
     \sigma(\pi,\vec{c}\,) . $$
Now we trace down then right:
\begin{align*}\vec{c}\otimes\pi\longmapsto\sigma(\pi,\vec{c}\,)\oplus 0\longmapsto\sigma(\pi,\vec{c}\,)+0=\sigma(\pi,\vec{c}\,)
\end{align*}
Exactness of the rows in the diagram follows from
Lemma~\ref{lemma_exact_intersection},
as the bottom row has remained unchanged.
\end{proof}

\begin{lemma}
\label{lemma_exact_intersection_3}
The following diagram is commutative, and its rows are exact.
$$
\xymatrix{
0\ar[r]
&D_{i}(\Delta\cap\Gamma)\ar[d]^{\phi_{i}^{\Delta\cap\Gamma}}\ar[r]
&D_{i}(\Delta)\oplus D_{i}(\Gamma)\ar[d]^{\phi_{i}^{\Delta}\oplus\phi_{i}^{\Gamma}}\ar[r]
&D_{i}(\Delta\cup\Gamma)\ar[d]^{\phi_{i}^{\Delta\cup\Gamma}}\ar[r]
&0\\
0\ar[r]
&C_{i}(Q^{*}_{\Delta\cap\Gamma})\ar[r]
&C_{i}(Q^{*}_{\Delta})\oplus C_{i}(Q^{*}_{\Gamma})\ar[r]
&C_{i}(Q^{*}_{\Delta\cup\Gamma})\ar[r]
&0}
$$
\end{lemma}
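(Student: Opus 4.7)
The plan is to decompose the diagram as a direct sum indexed by compositions $\vec{c}\in\Comp(n)$ and reduce to the per-$\vec{c}$ statements already established. By Definition~\ref{definition_D_chain_complex} each chain group $D_{i}(\bullet)$ splits as $\bigoplus_{\vec{c}} D_{i}^{\vec{c}}(\bullet)$, and by equation~\eqref{equation_phi} the vertical maps $\phi^{\bullet}_{i}$ decompose correspondingly as $\sum_{\vec{c}} \phi^{\bullet,\vec{c}}_{i}$. Hence commutativity of each of the two squares reduces to a check on each $\vec{c}$-component separately. I would partition the compositions into three classes (compositions outside $\Delta\cup\Gamma$ contribute only zeros) and handle them in turn.

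For $\vec{c}\in\Delta\cap\Gamma$ the $\vec{c}$-component of the diagram is exactly the diagram of Lemma~\ref{lemma_exact_intersection}. For $\vec{c}\in\Delta-\Gamma$, I would first observe that since $\Gamma$ is a filter in $\Comp(n)$ and $\vec{c}\notin\Gamma$, any element of $\link_{\vec{c}}(\Gamma)$ would force $\vec{c}\in\Gamma$ by the filter property, so $\link_{\vec{c}}(\Gamma)=\emptyset$. By Lemma~\ref{lemma_link} we then get $\link_{\vec{c}}(\Delta\cap\Gamma)=\emptyset$ and $\link_{\vec{c}}(\Delta\cup\Gamma)=\link_{\vec{c}}(\Delta)$. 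Consequently $D_{i}^{\vec{c}}(\Gamma)=D_{i}^{\vec{c}}(\Delta\cap\Gamma)=0$ and the $\vec{c}$-component of the diagram is exactly the diagram of Lemma~\ref{lemma_exact_intersection_4}. The case $\vec{c}\in\Gamma-\Delta$ is symmetric.

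For exactness, the bottom row is the standard Mayer--Vietoris short exact sequence associated to $Q^{*}_{\Delta\cup\Gamma}=Q^{*}_{\Delta}\cup Q^{*}_{\Gamma}$ from Lemma~\ref{lemma_Q_split}. The top row is handled $\vec{c}$ by $\vec{c}$: Lemma~\ref{lemma_link} provides the cover $\link_{\vec{c}}(\Delta\cup\Gamma)=\link_{\vec{c}}(\Delta)\cup\link_{\vec{c}}(\Gamma)$, whose chain-level Mayer--Vietoris sequence is short exact, and tensoring with the $\mathbb{C}$-vector space $M^{B^{\#}(\vec{c}\,)}$ preserves this exactness. Direct-summing over $\vec{c}$ yields the desired short exact top row. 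The only bookkeeping concern---that the per-$\vec{c}$ lemmas write the bottom row as the full Mayer--Vietoris sequence rather than a $\vec{c}$-summand---is harmless because $\phi^{\bullet,\vec{c}}_{i}$ lands in the $\vec{c}$-typed summand of $C_{i}(Q^{*}_{\bullet})$, and these summands assemble over $\vec{c}$ to reconstitute $C_{i}(Q^{*}_{\bullet})$.
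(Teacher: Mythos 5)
Your proof follows essentially the same route as the paper's: decompose the top row and vertical maps as a direct sum indexed by compositions $\vec{c}$, identify each $\vec{c}$-component with the diagram of Lemma~\ref{lemma_exact_intersection} (for $\vec{c}\in\Delta\cap\Gamma$) or of Lemma~\ref{lemma_exact_intersection_4} (for $\vec{c}\in\Delta-\Gamma$, and its $\Delta$--$\Gamma$ swap for $\vec{c}\in\Gamma-\Delta$), keep the bottom row fixed, and take the direct sum. Your added explanation that $D_{i}^{\vec{c}}(\Gamma)=0$ for $\vec{c}\in\Delta-\Gamma$ because the filter property forces $\link_{\vec{c}}(\Gamma)=\emptyset$ is correct and makes the reduction more explicit than the paper does.

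One imprecision in the final sentence: you assert that $\phi_{i}^{\bullet,\vec{c}}$ lands in ``the $\vec{c}$-typed summand'' of $C_{i}(Q^{*}_{\bullet})$ and that these summands assemble over $\vec{c}$ to reconstitute $C_{i}(Q^{*}_{\bullet})$. In fact $\phi_{i}^{\Delta,\vec{c}}(\vec{d}\otimes t)=\sigma(\alpha,\vec{d}\,)$ has type $\vec{d}$, not $\vec{c}$, and as $\vec{c}$ varies the ranges $\vec{d}\in\link_{\vec{c}}(\Delta)$ overlap heavily, so there is no $\vec{c}$-indexed direct-sum decomposition of $C_{i}(Q^{*}_{\Delta})$ of the kind you describe. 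The fixed bottom row is nevertheless harmless, but for the more pedestrian reason the paper gives: the total vertical map is by definition $\phi_{i}^{\bullet}=\sum_{\vec{c}}\phi_{i}^{\bullet,\vec{c}}$, so summing the per-$\vec{c}$ commutative squares (each sharing the same bottom row) over $\vec{c}$ yields commutativity of the assembled diagram by linearity, while the top row is exact as a direct sum of exact sequences. This does not affect the validity of your argument, only the stated justification for that step.
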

\begin{proof}
The proof is to take
direct sums of the previous two short exact sequences.
First, take the direct sum of the diagram in
Lemma~\ref{lemma_exact_intersection}
for each $\vec{c}\in\Delta\cap\Gamma$.
Next, take the resulting short exact sequence of chain complexes
and take its direct sum with the diagram in
Lemma~\ref{lemma_exact_intersection_4}
for each $\vec{c}\in\Delta-\Gamma$.
Finally, switch $\Delta$ and $\Gamma$ in
Lemma~\ref{lemma_exact_intersection_4}
and take the direct sum of the resulting diagram with the diagram from
Lemma~\ref{lemma_exact_intersection_4}
for each $\vec{c}\in\Gamma-\Delta$. Observe that the second row of
the diagram remains the same throughout this process.
Also, note that the top row is exact
as it is the direct sum of exact sequences.
All together, this yields the desired commutative diagram. 
\end{proof}

\begin{proposition}
\label{proposition_D_to_E}
The following diagram is commutative, and its rows are exact.
$$
\xymatrix{
0\ar[r]
&E_{i}(\Delta\cap\Gamma)\ar[d]^{\phi_{i}^{\Delta\cap\Gamma}}\ar[r]
&E_{i}(\Delta)\oplus E_{i}(\Gamma)\ar[d]^{\phi_{i}^{\Delta}\oplus\phi_{i}^{\Gamma}}\ar[r]
&E_{i}(\Delta\cup\Gamma)\ar[d]^{\phi_{i}^{\Delta\cup\Gamma}}\ar[r]
&0\\
0\ar[r]
&C_{i}(Q^{*}_{\Delta\cap\Gamma})\ar[r]
&C_{i}(Q^{*}_{\Delta})\oplus C_{i}(Q^{*}_{\Gamma})\ar[r]
&C_{i}(Q^{*}_{\Delta\cup\Gamma})\ar[r]
&0}
$$
\end{proposition}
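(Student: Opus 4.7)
The plan is to obtain Proposition~\ref{proposition_D_to_E} by repeating the argument of Lemma~\ref{lemma_exact_intersection_3} verbatim, with the permutation module~$M^{B^{\#}(\vec{c}\,)}$ replaced everywhere by the Specht module~$S^{B^{*}(\vec{c}\,)}$. Thus I would first establish analogues of Lemma~\ref{lemma_exact_intersection} (for each $\vec{c}\in\Delta\cap\Gamma$) and Lemma~\ref{lemma_exact_intersection_4} (for each $\vec{c}\in\Delta-\Gamma$ and each $\vec{c}\in\Gamma-\Delta$), using $E^{\vec{c}}_{i}(-)$ in place of $D^{\vec{c}}_{i}(-)$, and then take direct sums over the three disjoint regions to assemble the desired diagram.

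The key observation justifying this wholesale replacement is that the top horizontal maps in the diagrams of Lemmas~\ref{lemma_exact_intersection} and~\ref{lemma_exact_intersection_4} are obtained from the Mayer--Vietoris short exact sequences of the chain complexes of the links, tensored with the identity on the second factor. Since these sequences are short exact sequences of $\mathbb{C}$-vector spaces, tensoring with any vector space preserves exactness; in particular, tensoring with $S^{B^{*}(\vec{c}\,)}$ keeps the top row exact. The vertical maps $\phi^{\Delta,\vec{c}}_{i}$ were originally defined on the larger space $D^{\vec{c}}_{i}(\Delta)$, but they restrict to $E^{\vec{c}}_{i}(\Delta)$, since the formula $\phi^{\Delta,\vec{c}}_{i}(\vec{d}\otimes t)=\sigma(\alpha,\vec{d}\,)$ only depends on the tabloid~$t$ through its associated reading permutation~$\alpha$, and this extends linearly to polytabloids. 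Indeed, this restriction was already implicit in the remark at the end of Section~\ref{section_homomorphism_phi} that $\phi^{\Delta}_{i}$ descends to a map out of $E_{i}(\Delta)$.

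Commutativity of each square of the $E$-diagram is then inherited directly from the commutativity of the corresponding square in the $D$-diagram of Lemma~\ref{lemma_exact_intersection_3}, since the $E$-diagram is obtained by restricting the source spaces along the top row while leaving the bottom row unchanged, and restriction preserves compositions of maps. There is no substantive obstacle to this approach: every step is a direct translation of the previous work. The only point requiring mild care is verifying that each horizontal map in the top row restricts coherently to the Specht summand, but this is automatic since those maps act as the identity on the second tensor factor.
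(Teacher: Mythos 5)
Your proposal is correct and follows essentially the same route as the paper: commutativity is inherited from the $D$-diagram of Lemma~\ref{lemma_exact_intersection_3} because $E_i(-)\subseteq D_i(-)$, and exactness of the top row is obtained by rerunning the Mayer--Vietoris argument with $S^{B^{*}(\vec{c}\,)}$ in place of $M^{B^{\#}(\vec{c}\,)}$, using that tensoring over $\mathbb{C}$ preserves exactness. The paper simply cites this replacement without spelling out the intermediate $E$-versions of Lemmas~\ref{lemma_exact_intersection} and~\ref{lemma_exact_intersection_4}, while you make them explicit, but the substance is the same.
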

\begin{proof}
Since $E_{i}(\Delta)$ is a subspace of $D_{i}(\Delta)$,
it follows from Lemma~\ref{lemma_exact_intersection_3}
that the diagram is commutative.
Furthermore, that the
second row is exact also follows from this lemma.
It remains to show that the first row is exact.
However, this follows by the same reasoning that the first row
of Lemma~\ref{lemma_exact_intersection_3} is exact,
but with the permutation module~$M^{B^{\#}(\vec{c}\,)}$
replaced with the Specht module~$S^{B^{*}(\vec{c}\,)}$.
\end{proof}

\begin{proposition}
Assume that Theorem~\ref{theorem_main}
holds for the simplicial complexes
$\Delta$, $\Gamma$, and 
the intersection $\Delta \cap \Gamma$.
Then Theorem~\ref{theorem_main}
also holds for the union~$\Delta \cup \Gamma$.
\label{proposition_the_building_step}
\end{proposition}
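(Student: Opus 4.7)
The plan is to pass to the long exact sequences in reduced homology induced by the two short exact sequences of chain complexes in Proposition~\ref{proposition_D_to_E}, and then apply the Five Lemma in the category of $\SSSS_{n-1}$-modules.

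First I would note that the commutative diagram in Proposition~\ref{proposition_D_to_E} is in fact a short exact sequence of chain complexes: the differentials on both rows are compatible because $\phi^{\Delta,\vec{c}}$ is a chain map (Lemma~\ref{lemma_commute_diagram_1}) and the horizontal maps come from Mayer--Vietoris, which are themselves chain maps. Taking the long exact sequence in homology of each row and using Lemma~\ref{lemma_chain_homology} to identify $\rH_i(E(X))$ with $K_i(X)$, I obtain the commutative ladder with exact rows
\[
\xymatrix@C=1.1pc{
\cdots \ar[r] & K_{i}(\Delta\cap\Gamma) \ar[r] \ar[d]^{\phi_{i}^{\Delta\cap\Gamma}} & K_{i}(\Delta)\oplus K_{i}(\Gamma) \ar[r] \ar[d]^{\phi_{i}^{\Delta}\oplus\phi_{i}^{\Gamma}} & K_{i}(\Delta\cup\Gamma) \ar[r] \ar[d]^{\phi_{i}^{\Delta\cup\Gamma}} & K_{i-1}(\Delta\cap\Gamma) \ar[r] \ar[d]^{\phi_{i-1}^{\Delta\cap\Gamma}} & \cdots \\
\cdots \ar[r] & \rH_{i}(Q^{*}_{\Delta\cap\Gamma}) \ar[r] & \rH_{i}(Q^{*}_{\Delta})\oplus \rH_{i}(Q^{*}_{\Gamma}) \ar[r] & \rH_{i}(Q^{*}_{\Delta\cup\Gamma}) \ar[r] & \rH_{i-1}(Q^{*}_{\Delta\cap\Gamma}) \ar[r] & \cdots
}
\]
Commutativity of the non-connecting squares is immediate from Proposition~\ref{proposition_D_to_E}, and commutativity of the squares involving the connecting homomorphism follows from the standard naturality of the Mayer--Vietoris connecting map with respect to morphisms of short exact sequences of chain complexes.

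By the inductive hypothesis, the vertical maps $\phi_{i}^{\Delta\cap\Gamma}$, $\phi_{i}^{\Delta}$, and $\phi_{i}^{\Gamma}$ are $\SSSS_{n-1}$-equivariant isomorphisms for every index $i$. The Five Lemma, applied in the abelian category of $\SSSS_{n-1}$-modules (all horizontal and vertical maps are equivariant by Lemma~\ref{lemma_equivariance} and Remark~\ref{remark_action}), then forces $\phi_{i}^{\Delta\cup\Gamma}$ to be an $\SSSS_{n-1}$-equivariant isomorphism as well. Combined with Proposition~\ref{proposition_simplex} and induction on the number of maximal faces of the complex, this establishes Theorem~\ref{theorem_main} for arbitrary simplicial complexes.

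The main obstacle I anticipate is not the algebraic skeleton, which is routine once the ladder is in place, but rather verifying cleanly that the chain-level maps of Proposition~\ref{proposition_D_to_E} genuinely assemble into a morphism of short exact sequences of chain complexes respecting the $\SSSS_{n-1}$-action; in particular, one must check that the Mayer--Vietoris splitting used on the top row (which relies on Lemma~\ref{lemma_link}) is compatible with the $\phi_{i}^{\cdot,\vec{c}}$ summand by summand. Once this compatibility and the naturality of the connecting homomorphism are in hand, the Five Lemma finishes the argument with no further computation.
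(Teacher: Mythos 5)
Your proof is correct and follows essentially the same route as the paper: pass from the short exact sequences of chain complexes in Proposition~\ref{proposition_D_to_E} to the long exact (Mayer--Vietoris) ladder in homology via the zig-zag lemma, identify $\rH_i(E(\cdot))$ with $K_i(\cdot)$ by Lemma~\ref{lemma_chain_homology}, and invoke the Five Lemma together with equivariance from Lemma~\ref{lemma_equivariance}. The only (harmless) deviation is the closing sentence about induction on maximal faces, which belongs to the proof of Theorem~\ref{theorem_main} itself rather than to this proposition.
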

\begin{proof}
Consider the diagram of short exact sequences of chain complexes
given in Proposition~\ref{proposition_D_to_E}.
Use the zig-zag lemma to obtain 
the Mayer--Vietoris sequence:
$$
\xymatrix{
\cdots\ar[r]
&K_{i}(\Delta\cap\Gamma)\ar[d]^{\phi_{i}^{\Delta\cap\Gamma}}\ar[r]
&K_{i}(\Delta)\oplus K_{i}(\Gamma)\ar[d]^{\phi_{i}^{\Delta}\oplus\phi_{i}^{\Gamma}}\ar[r]
&K_{i}(\Delta\cup\Gamma)\ar[d]^{\phi_{i}^{\Delta\cup\Gamma}}\ar[r]
&\cdots\\
\cdots\ar[r]
&\rH_{i}(Q^{*}_{\Delta\cap\Gamma})\ar[r]
&\rH_{i}(Q^{*}_{\Delta}) \oplus \rH_{i}(Q^{*}_{\Gamma})\ar[r]
&\rH_{i}(Q^{*}_{\Delta\cup\Gamma})\ar[r]
&\cdots\\}
$$
The assumption that
Theorem~\ref{theorem_main}
holds for
the complexes
$\Delta \cap \Gamma$, $\Delta$, and $\Gamma$
implies that
$\phi_{i}^{\Delta\cap\Gamma}$
and
$\phi_{i}^{\Delta} \oplus \phi_{i}^{\Gamma}$
are isomorphisms.
The five-lemma now implies that
$\phi_{i}^{\Delta\cup\Gamma}$ is also an isomorphism.
Furthermore, $\phi_{i}^{\Delta\cup\Gamma}$ is an $\SSSS_{n-1}$-equivariant map
by Lemma~\ref{lemma_equivariance}. 
\end{proof}

\begin{proof}[Proof of Theorem~\ref{theorem_main}.]
Since every simplicial complex $\Delta$ is 
the union of simplexes, 
Proposition~\ref{proposition_the_building_step}
implies that it is enough to prove
Theorem~\ref{theorem_main}
for simplexes and the empty simplex.
This was done in
Proposition~\ref{proposition_simplex}
and
Corollary~\ref{corollary_empty}.
\end{proof}

\section{Alternate Proof of Theorem~\ref{theorem_main_result}}

As mentioned in the introduction,
we now give an alternate proof of Theorem~\ref{theorem_main_result} 
using a poset fiber theorem of
Bj\"orner, Wachs and Welker~\cite{Bjorner_Wachs_Welker}.

\begin{theorem}
Let $\Delta$ be a simplicial complex of compositions of $n$.
Then the $i$th reduced homology group
of the simplicial complex~$Q^{*}_{\Delta}$
is given by
$$
\rH_{i}(Q^{*}_{\Delta})
\cong_{\SSSS_{n-1}}
\bigoplus_{\vec{c}\, \in \Delta}
       \rH_{i-|\vec{c}\,|+1}(\link_{\vec{c}\,}(\Delta))
   \otimes
          S^{B^{*}(\vec{c}\,)}.    $$
\end{theorem}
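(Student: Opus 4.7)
The plan is to apply the equivariant poset fiber theorem of Bj\"orner--Wachs--Welker~\cite{Bjorner_Wachs_Welker} to the type map $\type : Q^{*}_{\Delta} \longrightarrow \Delta$, which sends an ordered set partition to its composition of block sizes. Regarded in the dual orders (so that $\Delta$ and $Q^{*}_{\Delta}$ are simplicial posets), this map is order preserving, and by Remark~\ref{remark_action} it is $\SSSS_{n-1}$-equivariant with respect to the permutation action on~$Q^{*}_{\Delta}$ and the trivial action on~$\Delta$.

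First I would identify the fibers and their homotopy type. For a composition $\vec{c} \in \Delta$, the preimage of the principal filter generated by~$\vec{c}$ (under $\leq^{*}$) is precisely the $\SSSS_{n-1}$-invariant subcomplex~$Q^{*}_{\vec{c}\,}$. By Theorem~\ref{theorem_Ehrenborg_Jung}, this fiber is homotopy equivalent to a wedge of $\beta^{*}_{n}(\vec{c}\,)$ spheres of dimension $|\vec{c}\,|-2$, and its top reduced homology is isomorphic as an $\SSSS_{n-1}$-module to the Specht module $S^{B^{*}(\vec{c}\,)}$. This single-degree concentration, together with the equivariance, is exactly the hypothesis needed to feed into the Bj\"orner--Wachs--Welker theorem. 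The theorem then yields an $\SSSS_{n-1}$-equivariant direct sum decomposition of $\rH_{i}(Q^{*}_{\Delta})$ indexed by $\vec{c} \in \Delta$, whose $\vec{c}$-summand is the tensor product of the top reduced homology of the fiber $Q^{*}_{\vec{c}\,}$ with the reduced homology of the open upper interval at~$\vec{c}$ in $\Delta$. Because the action on the base is trivial, no induced representations appear; because each fiber has homology concentrated in a single degree, the bi-degree sum in the general BWW formula collapses. The dimensions align: the fiber contributes in degree $|\vec{c}\,|-2$, which forces the base contribution into degree $i - (|\vec{c}\,|-2) - 1 = i - |\vec{c}\,| + 1 = i_{\vec{c}\,}$, matching the shift throughout this paper.

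The main obstacle is bookkeeping: one must reconcile the formulation of the poset fiber theorem in~\cite{Bjorner_Wachs_Welker}, stated in terms of the order complex of open upper intervals in the base, with our simplicial-complex structure on~$\Delta$. Concretely, one must check that the open upper interval at $\vec{c}$ in the simplicial poset~$\Delta$ has order complex homotopy equivalent to $\link_{\vec{c}\,}(\Delta)$, a standard fact about simplicial posets (the order complex of the face poset of a simplicial complex with its minimum removed is homotopy equivalent to the complex itself). Once this is in hand, substituting $\rH_{|\vec{c}\,|-2}(Q^{*}_{\vec{c}\,}) \cong_{\SSSS_{n-1}} S^{B^{*}(\vec{c}\,)}$ into the BWW decomposition produces the desired isomorphism. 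In contrast to the Mayer--Vietoris proof of Sections~\ref{section_homomorphism_phi}--\ref{section_the_building_step}, this route is short but does not produce an explicit chain-level map realizing the isomorphism, which is why both proofs are recorded.
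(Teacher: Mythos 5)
Your proposal reproduces, in outline and in essentially every detail, the paper's own ``Alternate Proof of Theorem~\ref{theorem_main_result}'': apply the equivariant poset fiber theorem of Bj\"orner--Wachs--Welker to the type map, identify the fiber over the principal filter at $\vec{c}$ with $Q^{*}_{\vec{c}}$, invoke Theorem~\ref{theorem_Ehrenborg_Jung} for the single-degree acyclicity hypothesis and the Specht-module description of the fiber's top homology, observe that the trivial $\SSSS_{n-1}$-action on the base kills the induction step, and match the open upper interval $(\Delta - \{(n)\})_{>^{*}\vec{c}}$ with $\link_{\vec{c}}(\Delta)$. It is correct and is the same route; the paper's primary proof (Sections~\ref{section_homomorphism_phi}--\ref{section_the_building_step}) is the Mayer--Vietoris argument, which it keeps precisely because, unlike the BWW approach, it yields an explicit chain-level isomorphism.
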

\begin{proof}
Consider 
the two posets
$\Delta$ and~$Q^{*}_{\Delta}$
with the reverse order $\leq^{*}$
and the poset map
$$ \type : Q^{*}_{\Delta} - \{([n])\} \longrightarrow \Delta - \{(n)\} . $$
Observe that the type map respects the action of
the symmetric group~$\SSSS_{n-1}$.
Now the inverse image
$\type^{-1}(\Delta_{\leq^{*} \vec{c}})$
is the filter~$Q^{*}_{\vec{c}}$.
Since $Q^{*}_{\vec{c}}$ only has reduced homology
in dimension $|\vec{c}\,| - 2$ by
Theorem~\ref{theorem_Ehrenborg_Jung}, 
we have that
the fiber
$\triangle(\type^{-1}(\Delta_{\leq^{*} \vec{c}}))$
is
$(|\vec{c}\,| - 3)$-acyclic,
where $|\vec{c}\,| - 3$ is the length of the longest chain
in $\type^{-1}(\Delta_{<^{*} \vec{c}})$.
Hence 
Theorem~9.1 of~\cite{Bjorner_Wachs_Welker}
applies.
Since~$\SSSS_{n-1}$ acts trivially on $\Delta$
(see Remark~\ref{remark_action}),
we have that the stabilizer
$\Stab_{\SSSS_{n-1}}(\vec{c}\,)$
is in fact the whole group $\SSSS_{n-1}$.
Thus there is no representation to induce and we have
\begin{align*}
\rH_{i}(Q^{*}_{\Delta})
& \cong_{\SSSS_{n-1}}
\rH_{i}(\Delta)
\oplus
\bigoplus_{\vec{c}\, \in \Delta - \{(n)\}}
       \rH_{|\vec{c}\,|-2}(\type^{-1}(\Delta_{\leq^{*} \vec{c}}))
   \otimes
        \rH_{i - |\vec{c}\,| + 1}((\Delta - \{(n)\})_{>^{*} \vec{c}})  \\
& \cong_{\SSSS_{n-1}}
\rH_{i}(\Delta)
\oplus
\bigoplus_{\vec{c}\, \in \Delta - \{(n)\}}
          S^{B^{*}(\vec{c}\,)}
   \otimes          
       \rH_{i-|\vec{c}\,|+1}(\link_{\vec{c}\,}(\Delta))   ,
\end{align*}
where the first summand corresponds to $\vec{c} = (n)$
and the trivial representation $S^{B^{*}(n)}$, proving the result.
\end{proof}

\section{Filters in the set partition lattice}
\label{section_filters_in_the_set_partition_lattice}

In Theorem~\ref{theorem_main_result} we characterized
each homology group of~$Q^{*}_{\Delta}$,
a subspace of ordered set partitions.
We will now translate the topological data we have gathered on $Q_{\Delta}^{*}$ into data on the usual partition lattice~$\Pi_{n}$.

Recall that $Q_{\Delta}^{*}$ is the collection of ordered set partitions
containing the element $n$ in the last block, whose type is contained
in the simplicial complex $\Delta\subseteq \Comp(n)$.
Define the \emph{forgetful map}
$f:Q^{*}_{\Delta} \longrightarrow \Pi_{n}$
given by removing the order between blocks, that is,
$f((C_{1},C_{2},\ldots,C_{k})) = \{C_{1},C_{2},\ldots,C_{k}\}$.
\begin{definition}
Let $\Pistar_{\Delta}\subseteq \Pi_{n}$
be the image of $Q_{\Delta}^{*}$ under the forgetful map~$f$.
\end{definition}

\begin{lemma}
\label{lemma_integer_filter}
Suppose that $F$ is a filter in the integer partition lattice.
Let $\Delta_{F}$ be the
filter of compositions given by
$\{\vec{c} \in \Comp(n) : \type(\vec{c}\,)\in F\}$.
Then the associated filter~$\Pistar_{\Delta_{F}}$
in the partition lattice is given by
$\{\pi \in \Pi_{n} : \type(\pi)\in F\}$.
\end{lemma}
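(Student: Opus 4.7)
The plan is to prove the lemma by a direct double-inclusion argument on the set $\Pistar_{\Delta_F}$, after first verifying the unstated prerequisite that $\Delta_F$ really is a filter in $\Comp(n)$, so that the construction $\Pistar_{\Delta_F}$ is legal. The key observation throughout is that the \emph{type} map is compatible at every level: if $\sigma = (C_1, \ldots, C_k)$ is an ordered set partition with underlying set partition $\pi = f(\sigma) = \{C_1, \ldots, C_k\}$, then $\type(\type(\sigma)) = \type(\pi)$, since forgetting the order of the blocks corresponds exactly to regarding the composition $(|C_1|, \ldots, |C_k|)$ as the multiset $\{|C_1|, \ldots, |C_k|\}$.

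First I would check that $\Delta_F$ is indeed a filter in $\Comp(n)$. If $\vec{c} = (c_1, \ldots, c_i, c_{i+1}, \ldots, c_k) \coveredby \vec{c}\,' = (c_1, \ldots, c_i + c_{i+1}, \ldots, c_k)$ is a cover relation in $\Comp(n)$, then at the level of integer partitions the types satisfy $\type(\vec{c}\,) \coveredby \type(\vec{c}\,')$, or possibly $\type(\vec{c}\,) = \type(\vec{c}\,')$ if there are repeated parts, but in either case the relation in $I_n$ is $\type(\vec{c}\,) \leq \type(\vec{c}\,')$. Since $F$ is a filter in $I_n$, if $\type(\vec{c}\,) \in F$ then $\type(\vec{c}\,') \in F$, showing $\Delta_F$ is closed upward in $\Comp(n)$.

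For the forward inclusion $\Pistar_{\Delta_F} \subseteq \{\pi \in \Pi_n : \type(\pi) \in F\}$, let $\pi \in \Pistar_{\Delta_F}$, so $\pi = f(\sigma)$ for some $\sigma = (C_1, \ldots, C_k) \in Q^{*}_{\Delta_F}$. Then $\type(\sigma) \in \Delta_F$ by definition of $Q^{*}_{\Delta_F}$, so $\type(\type(\sigma)) \in F$ by definition of $\Delta_F$. By the compatibility observation above, $\type(\pi) = \type(\type(\sigma)) \in F$. For the reverse inclusion, suppose $\pi = \{B_1, \ldots, B_k\} \in \Pi_n$ satisfies $\type(\pi) \in F$. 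Reorder the blocks so that $n \in B_k$, and set $\sigma = (B_1, \ldots, B_k) \in Q_n$. Then $f(\sigma) = \pi$, the last block of $\sigma$ contains $n$, and $\type(\type(\sigma)) = \type(\pi) \in F$, so $\type(\sigma) \in \Delta_F$. Hence $\sigma \in Q^{*}_{\Delta_F}$ and $\pi = f(\sigma) \in \Pistar_{\Delta_F}$.

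There is no real obstacle here; the proof is essentially bookkeeping with the two different type maps and checking that everything respects the filter property. The only subtle point to state carefully is that reordering blocks is permitted because $\Pi_n$ forgets block order while $F$ depends only on the multiset of block sizes, which is what lets us always place the block containing $n$ in the final position.
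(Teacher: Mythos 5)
Your proof is correct and follows essentially the same approach as the paper: exhibiting, for each $\pi$ with $\type(\pi)\in F$, an ordered set partition in $Q^{*}_{\Delta_F}$ mapping to it under $f$, and noting the other containment is immediate. You add a bit more explicit bookkeeping (distinguishing the two type maps and checking that $\Delta_F$ is a filter), but the argument is the same.
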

\begin{proof}
Choose $\pi \in \Pi_{n}$ such that $\type(\pi)\in F$,
with $\pi=\{B_{1},B_{2}, \ldots, B_{k}\}$
where we assume $n \in B_{k}$.
The ordered set partition $\tau=(B_{1},B_{2}, \ldots, B_{k})$
is an element of $Q_{\Delta_{n}}^*$,
since $\type(\tau) = \type(\pi) \in F$.
Hence $\pi$ is in the image of the forgetful map~$f$.
The other direction is clear.
\end{proof}
\begin{remark}
{\rm
In general, taking the image of a filter $\Delta\subseteq\Comp(n)$ 
under the map $\type$ does not define a filter
in the integer partition lattice~$I_{n}$. 
For example,
consider the simplex~$\Delta$ in~$\Comp(6)$ generated by~$(3,2,1)$.
Note that
$\type(\Delta)$ consists of the four partitions
$\{\{3,2,1\}, \{3+2,1\}, \{3,2+1\}, \{3+2+1\}\}
=
\{\{3,2,1\}, \{5,1\}, \{3,3\}, \{6\}\}$.
This is not a filter in $I_{6}$ since it does not contain
the partition~$\{4,2\}$.
}
\end{remark}
\begin{lemma}
\label{lemma_forgetful_equivariant}
The forgetful map $f:Q_{\Delta}^{*}\longrightarrow\Pistar_{\Delta}$
respects the $\SSSS_{n-1}$-action.
\end{lemma}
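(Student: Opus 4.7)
The plan is to verify the equivariance directly from the definitions, using the fact that the action of $\SSSS_{n-1}$ on ordered set partitions and on set partitions is the same relabeling operation, just applied to lists versus unordered collections of blocks.

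First I would unwind what needs checking. By Remark~\ref{remark_action}, $\SSSS_{n-1}$ acts on $Q^{*}_{\Delta}$ by $\alpha \cdot (C_{1}, \ldots, C_{k}) = (\alpha(C_{1}), \ldots, \alpha(C_{k}))$, and since $\alpha \in \SSSS_{n-1}$ satisfies $\alpha(n) = n$, the element~$n$ remains in the last block, so the action restricts to $Q^{*}_{\Delta}$. Similarly, $\SSSS_{n-1}$ acts on $\Pi^{*}_{\Delta}$ by relabeling blocks; I would briefly note that this action is well-defined on $\Pi^{*}_{\Delta}$ because $\type(\alpha \cdot \pi) = \type(\pi)$ and, by Lemma~\ref{lemma_integer_filter} applied after observing that the image $\Pistar_{\Delta}$ is determined only by which types arise, any relabeling of a partition in $\Pistar_{\Delta}$ again lies in $\Pistar_{\Delta}$.

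Next I would simply compute: for $\alpha \in \SSSS_{n-1}$ and $\tau = (C_{1}, \ldots, C_{k}) \in Q^{*}_{\Delta}$,
$$
f(\alpha \cdot \tau)
= f\bigl((\alpha(C_{1}), \ldots, \alpha(C_{k}))\bigr)
= \{\alpha(C_{1}), \ldots, \alpha(C_{k})\}
= \alpha \cdot \{C_{1}, \ldots, C_{k}\}
= \alpha \cdot f(\tau).
$$
This is the entire content of the lemma.

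There is no real obstacle here; the only subtle point worth flagging explicitly is that we must restrict to $\SSSS_{n-1}$ rather than allowing the full $\SSSS_{n}$, since otherwise $\alpha \cdot \tau$ need not lie in $Q^{*}_{\Delta}$ (the element~$n$ could be moved out of the last block). Once that observation is made, the verification reduces to unwrapping the two parallel action formulas, and the proof is a single line.
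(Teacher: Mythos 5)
Your proof is correct and follows the same one-line computation as the paper's proof; the extra discussion of well-definedness and the restriction to $\SSSS_{n-1}$ is harmless and not something the paper bothers to spell out. One small caution: your appeal to Lemma~\ref{lemma_integer_filter} is misplaced, since that lemma only applies when $\Delta = \Delta_F$ for a filter~$F$ of integer partitions, not for arbitrary filters in $\Comp(n)$; the invariance of $\Pistar_{\Delta}$ under $\SSSS_{n-1}$ follows more directly from the fact that $\Pistar_{\Delta}$ is the $f$-image of $Q^{*}_{\Delta}$, that $Q^{*}_{\Delta}$ is $\SSSS_{n-1}$-stable (because $\alpha$ fixes $n$ and preserves type), and the identity $f(\alpha\cdot\sigma)=\alpha\cdot f(\sigma)$ you just verified.
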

\begin{proof}
Let $\alpha \in \SSSS_{n-1}$
and $\sigma=(C_{1}, \ldots, C_{k})\in Q_{\Delta}^{*}$. 
Then we have that 
\begin{align*}
f(\alpha \cdot \sigma)
 & =
f((\alpha(C_{1}),\ldots,\alpha(C_{k})))
=
\{\alpha(C_{1}),\ldots,\alpha(C_{k})\}
=
\alpha \cdot f(\sigma).
\qedhere
\end{align*}
\end{proof}

The $\SSSS_{n-1}$ action on $\Pistar_{\Delta}$
extends to the chains in the order complex
$\triangle(\Pistar_{\Delta}-\{\ho\})$.

For a statement of the equivariant version of the Quillen Fiber Lemma,
see~\cite[Theorem~5.2.2]{Wachs_III}.

\begin{proposition}
\label{proposition_forgetful}
The forgetful map
$f : Q^{*}_{\Delta} - \{\ho\} 
   \longrightarrow \Pistar_{\Delta} - \{\ho\} = P$
satisfies the condition of
Quillen's Equivariant Fiber Lemma, that is,  
for a partition $\pi = \{B_{1}, B_{2}, \ldots, B_{k}\}$ 
in~$P$,
the order complex
$\triangle(f^{-1}(P_{\geq \pi}))$ is the barycentric subdivision of a cone, and is
therefore contractible and acyclic.
\end{proposition}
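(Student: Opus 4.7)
The plan is to exhibit a cone structure on $f^{-1}(P_{\geq\pi})$ so that its order complex becomes the barycentric subdivision of a cone, which is automatically contractible. Writing $\pi = \{B_{1}, \ldots, B_{k}\}$ with $n \in B_{k}$, the candidate cone apex is the two-block ordered partition $\hat{\sigma} = ([n] \setminus B_{k}, B_{k})$. First I verify $\hat{\sigma} \in f^{-1}(P_{\geq\pi})$: since $\pi \in \Pistar_{\Delta}$, some ordering of $\pi$'s blocks with $B_{k}$ last has type in $\Delta$, and the filter property of $\Delta$ in $\Comp(n)$ allows me to coarsen this to the two-part composition $(n - |B_{k}|, |B_{k}|)$, which therefore also lies in $\Delta$; thus $\hat{\sigma} \in Q_{\Delta}^{*}$, and clearly $f(\hat{\sigma})$ is coarser than $\pi$.

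Next I would identify $f^{-1}(P_{\geq\pi})$ as the face poset of a simplicial complex $K$, using the fact that $Q_{\Delta}^{*}$ is a simplicial poset under the reverse order (every interval is Boolean). Under this identification the vertices of $K$ correspond to the two-block ordered partitions in $f^{-1}(P_{\geq\pi})$; more generally, a $j$-dimensional face of $K$ corresponds to an $(j+2)$-block ordered partition in the fiber, with face containment induced by the coarsening order. I would then show that $K$ is a cone with apex $\hat{\sigma}$ by proving that every finest element $\sigma = (C_{1}, \ldots, C_{l}) \in f^{-1}(P_{\geq\pi})$ satisfies $C_{l} = B_{k}$; consequently the two-block coarsening $(C_{1} \cup \cdots \cup C_{l-1}, C_{l})$ equals $\hat{\sigma}$, placing $\hat{\sigma}$ in every maximal face of $K$. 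The order complex $\triangle(f^{-1}(P_{\geq\pi}))$ is then exactly the barycentric subdivision of this cone, hence contractible and acyclic, which also implies the equivariant statement once one checks that $\hat{\sigma}$ is canonical (and therefore $\SSSS_{n-1}$-equivariant) in its construction from $\pi$.

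The main obstacle will be the structural claim about the last block of a finest element. Assuming $C_{l} \supsetneq B_{k}$, one must produce a strictly finer ordered partition in $f^{-1}(P_{\geq\pi})$, natural candidates being the split $(C_{1}, \ldots, C_{l-1}, C_{l} \setminus B_{k}, B_{k})$; the delicate point is showing that the resulting composition still lies in $\Delta$. Since the filter property of $\Delta$ only provides closure under coarsening, the argument must leverage the hypothesis $\pi \in \Pistar_{\Delta}$ to supply the finer compositions needed, and this interplay between the block structure of $\pi$ and the composition structure of $\Delta$ is where I expect the technical weight of the proof to lie.
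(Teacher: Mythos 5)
Your approach is the same as the paper's: cone with apex $([n] \setminus B_k, B_k)$, with the order complex of the simplicial poset $f^{-1}(P_{\geq\pi})$ identified as the barycentric subdivision of that cone. You have also located the delicate step exactly, namely the containment $\type(C_1, \ldots, C_{l-1}, C_l \setminus B_k, B_k) \in \Delta$, and you should know that the paper's own proof simply asserts, without argument, that this split lies in $f^{-1}(P_{\geq\pi})$. Your instinct that this is where the technical weight lies is sound, and the issue runs deeper than either you or the paper acknowledge.

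Because $\type(\sigma'')$ refines $\type(\sigma)$ while $\Delta$ is only closed under coarsening, one cannot deduce $\type(\sigma'')\in\Delta$ from $\type(\sigma)\in\Delta$; one must instead exhibit an element of $\Delta$ that $\type(\sigma'')$ coarsens. This succeeds when $\Delta$ is a simplex generated by a single composition $\vec{c}$: then $\type(\sigma)$ coarsens $\vec{c}$, $|B_k|$ is a suffix-sum of $\vec{c}$ because $\pi\in\Pistar_{\vec{c}}$, and hence $\type(\sigma'')$ also coarsens $\vec{c}$. It also succeeds when $\Delta = \Delta_F$ for a filter $F$ of integer partitions as in Lemma~\ref{lemma_integer_filter}, because then membership in $\Delta$ depends only on the unordered multiset of parts, and the multiset of block sizes of the set partition $\{C_1,\ldots,C_{l-1},C_l\setminus B_k,B_k\}\geq\pi$ lies in $F$ automatically. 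But for an arbitrary filter $\Delta$ the containment can fail. In $\Comp(7)$ let $\Delta$ be the filter generated by $(1,2,2,2)$ and $(2,5)$, let $\pi=\{\{1\},\{2,3\},\{4,5\},\{6,7\}\}$ so that $B_k=\{6,7\}$, and let $\sigma=(\{4,5\},\{1,2,3,6,7\})$. Then $\type(\sigma)=(2,5)\in\Delta$ and $f(\sigma)>\pi$, but $\type(\sigma'')=(2,3,2)\notin\Delta$; in fact no refinement of $\sigma$ into unions of $\pi$-blocks has type in $\Delta$, so $\sigma$ is an isolated vertex of $f^{-1}(P_{\geq\pi})$ and the fiber is disconnected. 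Thus the cone claim, and even contractibility, can fail for a general filter, and any complete proof must restrict the class of filters $\Delta$ (for instance to simplices, or to $\Delta_F$'s) or argue differently.
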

\begin{proof}
Let $B_{k}$ be the block of the partition~$\pi$
that contains the element~$n$.
Note that because every ordered partition
in~$Q_{\Delta}^{*}$ must have the element $n$ in its last block,
we must have that each ordered set partition in the fiber
$f^{-1}(\pi)$ has the set~$B_{k}$ as its last block.
Furthermore, 
the last block of each ordered set partition in $f^{-1}(P_{\geq \pi})$
contains the block~$B_{k}$.

We claim that $f^{-1}(P_{\geq \pi})$ is a cone
with apex $([n] - B_{k}, B_{k})$.
Let $\sigma \in f^{-1}(P_{\geq \pi})$ be the ordered set partition
$\sigma=(C_{1}, \ldots, C_{p-1}, C_{p})$.
Note that the number of blocks of $\sigma$,
is greater than or equal to~$2$ as we have removed
the maximal element~$\ho$ from~$Q_{\Delta}^{*}$.
If $C_{p} = B_{k}$ then the face $\sigma$
contains the vertex $([n] - B_{k}, B_{k})$.
If $C_{p} \supsetneq B_{k}$ then both
$\sigma$ and the vertex $([n] - B_{k}, B_{k})$
are contained in 
the face
$(C_{1}, \ldots, C_{p-1}, C_{p} - B_{k}, B_{k})$
in~$f^{-1}(P_{\geq \pi})$.
Thus
$f^{-1}(P_{\geq \pi})$
is the face poset of a cone with vertex $([n]-B_{k},B_{k})$, and therefore $\triangle(f^{-1}(P_{\geq\pi})$
is the barycentric subdivision of a cone and hence contractible and acyclic.
\end{proof}

Combining Proposition~\ref{proposition_forgetful}
with Theorem~\ref{theorem_main_result},
we have the following result for the homology of the order complex $\triangle(\Pistar_{\Delta}-\{\ho\})$.
\begin{theorem}
\label{theorem_main_theorem_order_complex}
The $i$th reduced homology group of the order complex of $\Pistar_{\Delta}-\{\ho\}$ as an $\SSSS_{n-1}$-module
is given by 
$$   \rH_{i}(\triangle(\Pistar_{\Delta} - \{\ho\}))
\cong_{\SSSS_{n-1}}
       \bigoplus_{\vec{c} \in \Delta}
           \rH_{i-|\vec{c}\,|+1}(\link_{\vec{c}\,}(\Delta))
                \otimes
                                 S^{B^{*}(\vec{c}\,)}. $$
\end{theorem}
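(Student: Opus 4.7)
The proof is essentially a packaging of the groundwork already laid in the previous sections, so my plan is to identify the three ingredients and assemble them.

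First, I would observe that the forgetful map
$$ f : Q^{*}_{\Delta} - \{\ho\} \longrightarrow \Pistar_{\Delta} - \{\ho\} $$
satisfies the hypotheses of the equivariant version of Quillen's Fiber Lemma. Proposition~\ref{proposition_forgetful} shows that for every $\pi \in \Pistar_{\Delta}-\{\ho\}$, the fiber $f^{-1}((\Pistar_{\Delta}-\{\ho\})_{\geq \pi})$ is a cone with apex $([n]-B_{k},B_{k})$, where $B_{k}$ is the block of $\pi$ containing $n$; hence its order complex is contractible, and in particular acyclic. Lemma~\ref{lemma_forgetful_equivariant} shows that $f$ respects the $\SSSS_{n-1}$-action. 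Since $\SSSS_{n-1}$ acts trivially on $\Pistar_{\Delta}-\{\ho\}$ only insofar as types are concerned (the relevant equivariance on fibers passes through the $\SSSS_{n-1}$-action on ordered set partitions), the conditions of the Equivariant Quillen Fiber Lemma stated as Theorem~5.2.2 of~\cite{Wachs_III} are satisfied.

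Second, I would invoke this equivariant Quillen Fiber Lemma to conclude the $\SSSS_{n-1}$-equivariant homotopy equivalence
$$ \triangle(Q^{*}_{\Delta}-\{\ho\}) \simeq_{\SSSS_{n-1}} \triangle(\Pistar_{\Delta}-\{\ho\}), $$
which yields the $\SSSS_{n-1}$-equivariant isomorphism
$$ \rH_{i}(\triangle(\Pistar_{\Delta} - \{\ho\})) \cong_{\SSSS_{n-1}} \rH_{i}(Q^{*}_{\Delta}) $$
in every degree.

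Third, I would substitute in the main result of Section~\ref{section_the_main_result}, namely Theorem~\ref{theorem_main_result} (or the sharpened version Theorem~\ref{theorem_main}), which identifies $\rH_{i}(Q^{*}_{\Delta})$ with the direct sum
$$ \bigoplus_{\vec{c}\, \in \Delta} \rH_{i-|\vec{c}\,|+1}(\link_{\vec{c}\,}(\Delta)) \otimes S^{B^{*}(\vec{c}\,)} $$
as an $\SSSS_{n-1}$-module. Composing the two isomorphisms yields the statement.

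There is essentially no new obstacle in this proof: the two genuine pieces of work, namely the Mayer--Vietoris assembly in Sections~\ref{section_homomorphism_phi}--\ref{section_the_building_step} yielding Theorem~\ref{theorem_main_result}, and the cone argument in Proposition~\ref{proposition_forgetful} verifying Quillen's hypothesis, have already been carried out. The only mild subtlety I would double-check is that the version of Quillen's Fiber Lemma being cited is genuinely equivariant at the level of homology (not just a non-equivariant homotopy equivalence), which is exactly what Theorem~5.2.2 of~\cite{Wachs_III} provides; this ensures that the $\SSSS_{n-1}$-module structure on both sides matches up.
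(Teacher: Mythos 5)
Your proposal is correct and takes essentially the same route as the paper: the paper proves Theorem~\ref{theorem_main_theorem_order_complex} exactly by combining Proposition~\ref{proposition_forgetful} (the fibers of the forgetful map are cones, hence contractible) with the equivariant Quillen Fiber Lemma and Theorem~\ref{theorem_main_result}. The only quibble is the aside that ``$\SSSS_{n-1}$ acts trivially on $\Pistar_{\Delta}-\{\ho\}$ only insofar as types are concerned,'' which is awkwardly phrased (the action on $\Pistar_{\Delta}$ is not trivial; only the induced action on types is), but it plays no role in the argument.
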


\begin{remark}
{\rm
Suppose that $\link_{\vec{c}\,}(\Delta)$
has reduced homology in dimension $j$.
By Theorem~\ref{theorem_main_theorem_order_complex}
this reduced homology contributes to
dimension $j+|\vec{c}\,|-1$
of the reduced homology of the order complex
of~$\Pistar_{\Delta}-\{\ho\}$.
}
\label{remark_dimension_shift}
\end{remark}

We end the section with a discussion of Morse matchings
in the link~$\link_{\vec{c}\,}(\Delta)$.
Assume that the link $\link_{\vec{c}}(\Delta)$
has a discrete Morse matching
with critical cell~$\vec{d}$, which also contributes
to the reduced homology of $\link_{\vec{c}}(\Delta)$.
For instance, this case occurs if $\vec{d}$ is
a facet.
Similarly, $\vec{d}$ will contribute to the reduced homology
of $\link_{\vec{c}}(\Delta)$ if $\vec{d}$ is a homology
facet of a shelling.
In either case, the critical cell~$\vec{d}$ contributes to
the reduced homology of
$\triangle(\Pistar_{\Delta} - \{\ho\})$
in dimension
$\dim_{\link_{\vec{c}}(\Delta)}(\vec{d}\,) + |\vec{c}\,| - 1
=
\dim_{\Delta}(\vec{d}\,)
= |\vec{d}\,| - 2$,
by equation~\eqref{equation_dimension}.
Note that this dimension is independent of the
composition~$\vec{c}$.

\section{Consequences of the main result}
\label{section_consequences}

As the title of this section suggests, we will now derive results from Theorem~\ref{theorem_main_theorem_order_complex} using topological data from~$\Delta$.
\begin{theorem}
\label{theorem_manifold_homology}
Assume that $\Delta$ is homeomorphic to a $k$-dimensional manifold with or without boundary.
Then the reduced homology of
the order complex $\triangle(\Pistar_{\Delta} - \{\ho\})$
is given by
$$
 \rH_{i}(\triangle(\Pistar_{\Delta} - \{\ho\}))
       \cong_{\SSSS_{n-1}}
                 \rH_{i}(\Delta)
              \tensor
                1_{\SSSS_{n-1}}      \:\:\:\:\: \text{ for } i < k, $$
and the top dimensional homology is given by
$$
\rH_{k}(\triangle(\Pistar_{\Delta} - \{\ho\}))
       \cong_{\SSSS_{n-1}}
                 \rH_{k}(\Delta)
\tensor
               1_{\SSSS_{n-1}}
      \oplus
      \bigoplus_{\vec{c}\, \in\, \Int({\Delta})}
                S^{B^{*}(\vec{c}\,)}, 
$$
where $1_{\SSSS_{n-1}}$ is the trivial representation of $\SSSS_{n-1}$, and the direct sum is over the interior faces of the manifold~$\Delta$.
Moreover, these isomorphisms hold as
$\SSSS_{n-1}$-modules.
\end{theorem}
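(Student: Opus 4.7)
The plan is to specialize Theorem~\ref{theorem_main_theorem_order_complex} using the classical fact that in a $k$-dimensional PL manifold the link of a non-empty face $F$ is homeomorphic to $S^{k-\dim F -1}$ when $F$ is interior and to $B^{k-\dim F -1}$ when $F$ lies on the boundary. Accordingly, I would split the sum
$\bigoplus_{\vec{c}\in \Delta} \rH_{i-|\vec{c}\,|+1}(\link_{\vec{c}\,}(\Delta)) \otimes S^{B^{*}(\vec{c}\,)}$
given by Theorem~\ref{theorem_main_theorem_order_complex} according to where $\vec{c}$ sits in $\Delta$, isolating three cases: the minimum $\vec{c}=(n)$, non-empty interior faces, and boundary faces.

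First I would isolate the summand corresponding to $\vec{c}=(n)$, which is the minimum $\hz$ of $\Delta$. Here $\link_{(n)}(\Delta)=\Delta$ and the border-shape Specht module $S^{B^{*}((n))}=S^{(n-1)}$ is just the trivial representation $1_{\SSSS_{n-1}}$. Hence this summand contributes $\rH_{i}(\Delta)\otimes 1_{\SSSS_{n-1}}$ in every degree $i$.

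Next I would treat the summands indexed by compositions $\vec{c}$ with $|\vec{c}\,|\geq 2$. Combining $\dim_{\Delta}(\vec{c})=|\vec{c}\,|-2$ with the dimension shift of equation~\eqref{equation_dimension}, each such link has dimension $k-|\vec{c}\,|+1$, and by the manifold hypothesis it is a sphere of that dimension when $\vec{c}\in\Int(\Delta)$ and a ball of that dimension when $\vec{c}$ is on the boundary. So boundary summands contribute nothing to reduced homology, while each interior summand contributes one copy of $S^{B^{*}(\vec{c}\,)}$ in degree $i=k$ and nothing in any other degree.

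Assembling these pieces, in degrees $i<k$ only the $(n)$-summand survives and gives $\rH_{i}(\Delta)\otimes 1_{\SSSS_{n-1}}$; in degree $i=k$ one adds $\bigoplus_{\vec{c}\in\Int(\Delta)} S^{B^{*}(\vec{c}\,)}$, producing the claimed top-dimensional formula. Equivariance is inherited from Theorem~\ref{theorem_main_theorem_order_complex}, with the trivial $\SSSS_{n-1}$-action on the $\rH_{i}(\Delta)$ factor coming from Remark~\ref{remark_action}. There is no substantive obstacle; the only care required is to separate the empty face $\vec{c}=(n)$ from the rest and to understand $\Int(\Delta)$ as the set of non-empty interior faces of the manifold, so that no double-counting occurs.
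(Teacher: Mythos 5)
Your proof takes the same route as the paper: plug the manifold hypothesis into Theorem~\ref{theorem_main_theorem_order_complex}, separate the summand for the minimal face $(n)$, observe that links of boundary faces contribute nothing while links of interior faces shift up to top degree $k$, and identify $S^{B^{*}((n))}$ with the trivial representation. This is exactly the paper's argument.

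One small imprecision: you cite the PL fact that each link is \emph{homeomorphic} to $S^{k-\dim F-1}$ or $B^{k-\dim F-1}$, but the theorem's hypothesis is only that $\Delta$ is homeomorphic to a manifold, with no PL assumption. The paper instead invokes the weaker but more general statement (from the remark preceding Proposition~3.8.9 of Stanley's \emph{Enumerative Combinatorics, Vol.~1}) that these links have the \emph{reduced homology} of a sphere or ball of the appropriate dimension, which holds for any triangulation of a topological manifold. Since your argument only ever uses the homology of the link, replacing ``homeomorphic'' with ``homologically equivalent'' closes this cosmetic gap and leaves the proof unchanged.
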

\begin{proof}
Since $\Delta$ is homeomorphic to a $k$-dimensional manifold,
we may apply the comment preceding Proposition~3.8.9 of~\cite{EC1},
which states that for any $\vec{c}\in\Delta$,
where $\vec{c}$ is not the empty composition~$(n)$,
we have that $\link_{\vec{c}\,}(\Delta)$ has the homology groups
of a sphere of dimension $k-|\vec{c}\,|+1$
if $\vec{c}$ is on the interior of~$\Delta$,
or the homology groups of a ball of dimension $k-|\vec{c}\,|+1$
if $\vec{c}$ is on the boundary.
Hence if $\vec{c}$ is on the boundary of $\Delta$ it does not contribute
to the reduced homology of $\Delta(\Pistar_{\Delta}-\{\ho\})$. 
If instead $\vec{c}$ is in the interior of~$\Delta$
then
by Remark~\ref{remark_dimension_shift}
it will contribute to the 
reduced homology group of
dimension $(k-|\vec{c}\,|+1)+|\vec{c}\,|-1=k$.
This is the top homology of the complex.
Finally, observe that the composition~$(n)$
contributes to all homology groups
of $\triangle(\Pistar_{\Delta}-\{\ho\})$
when $\Delta$ has nontrivial homology,
and that the Specht module
$S^{B^{*}(n)}$ is the trivial representation~$1_{\SSSS_{n-1}}$.
\end{proof}

We now give two immediate corollaries of
Theorem~\ref{theorem_manifold_homology},
when $\Delta$ is
homeomorphic to a sphere or a ball.
\begin{corollary}
\label{corollary_sphere}
Suppose that $\Delta$ is homeomorphic to a sphere of dimension $k$. Then the order complex $\triangle(\Pistar_{\Delta}-\{\ho\})$ only has homology in dimension $k$ given by
$$\rH_{k}(\triangle(\Pistar_{\Delta}-\{\ho\}))
\cong_{\SSSS_{n-1}}
\bigoplus_{\vec{c}\in\Delta}S^{B^{*}(\vec{c}\,)}.$$
\end{corollary}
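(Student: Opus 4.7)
The plan is to derive this corollary as a direct specialization of Theorem~\ref{theorem_manifold_homology}, the manifold version. Since a $k$-sphere is a manifold without boundary, every composition in $\Delta$ other than the empty face $(n)$ is an interior face, and the reduced homology of $\Delta$ is concentrated in degree $k$ with $\rH_k(\Delta) \cong \mathbb{C}$ carrying the trivial $\SSSS_{n-1}$-action.

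First I would handle the vanishing range. For $i < k$, Theorem~\ref{theorem_manifold_homology} gives
\begin{equation*}
\rH_i(\triangle(\Pistar_\Delta - \{\ho\})) \cong_{\SSSS_{n-1}} \rH_i(\Delta) \otimes 1_{\SSSS_{n-1}} = 0,
\end{equation*}
since $\rH_i(\Delta) = 0$ in this range.

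Next, for $i = k$, Theorem~\ref{theorem_manifold_homology} yields
\begin{equation*}
\rH_k(\triangle(\Pistar_\Delta - \{\ho\})) \cong_{\SSSS_{n-1}} \rH_k(\Delta) \otimes 1_{\SSSS_{n-1}} \;\oplus\; \bigoplus_{\vec{c} \in \Int(\Delta)} S^{B^{*}(\vec{c}\,)}.
\end{equation*}
The key step is to reinterpret the first summand. Since $\rH_k(\Delta) \cong \mathbb{C}$ as a trivial $\SSSS_{n-1}$-module and the border shape $B^{*}((n)) = B((n-1))$ is a single row of $n-1$ boxes, whose associated Specht module is the trivial representation, we have $\rH_k(\Delta) \otimes 1_{\SSSS_{n-1}} \cong S^{B^{*}((n))}$. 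Because $\Delta$ has empty boundary, $\Int(\Delta) = \Delta - \{(n)\}$, so adjoining this single summand to the direct sum over interior faces gives precisely $\bigoplus_{\vec{c} \in \Delta} S^{B^{*}(\vec{c}\,)}$, as desired.

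There is no genuine obstacle here: once Theorem~\ref{theorem_manifold_homology} is in hand, the only substantive bookkeeping is checking that the contribution labeled by the empty face $(n)$ on the right-hand side agrees with the trivial representation summand coming from $\rH_k(\Delta)$. The equivariance of the resulting isomorphism is inherited directly from that of Theorem~\ref{theorem_manifold_homology}.
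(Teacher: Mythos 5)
Your derivation is correct and is exactly the paper's intended argument: the paper states Corollary~\ref{corollary_sphere} as an immediate consequence of Theorem~\ref{theorem_manifold_homology}, and your bookkeeping — that for a sphere $\Int(\Delta) = \Delta - \{(n)\}$ and the trivial summand $\rH_k(\Delta)\otimes 1_{\SSSS_{n-1}}$ is identified with $S^{B^{*}((n))}$ — is precisely what fills in the omitted details.
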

\begin{corollary}
\label{corollary_ball}
Suppose that $\Delta$ is homeomorphic to a ball of dimension $k$. Then the order complex $\triangle(\Pistar_{\Delta}-\{\ho\})$ only has homology in dimension $k$ given by
$$\rH_{k}(\triangle(\Pistar_{\Delta}-\{\ho\}))
\cong_{\SSSS_{n-1}}
\bigoplus_{\vec{c}\,\in\Int(\Delta)}S^{B^{*}(\vec{c}\,)}.$$
\end{corollary}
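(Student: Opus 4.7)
The plan is to obtain Corollary~\ref{corollary_ball} as an immediate specialization of Theorem~\ref{theorem_manifold_homology}, which has already been proven in the preceding passage. The only substantive input is the topological fact that a $k$-dimensional ball is contractible, so its reduced homology groups all vanish.

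More concretely, I would first observe that a ball $\Delta$ of dimension $k$ is, in particular, a $k$-manifold with boundary, so Theorem~\ref{theorem_manifold_homology} applies directly. Next I would invoke contractibility of $\Delta$ to conclude $\rH_{i}(\Delta) = 0$ for every $i$. Substituting this into the formula
\[
 \rH_{i}(\triangle(\Pistar_{\Delta} - \{\ho\}))
       \cong_{\SSSS_{n-1}}
                 \rH_{i}(\Delta)
              \tensor
                1_{\SSSS_{n-1}}
\]
for $i<k$ immediately gives that the reduced homology vanishes in all dimensions below the top. In top dimension $k$, the formula
\[
\rH_{k}(\triangle(\Pistar_{\Delta} - \{\ho\}))
       \cong_{\SSSS_{n-1}}
                 \rH_{k}(\Delta)
\tensor
               1_{\SSSS_{n-1}}
      \oplus
      \bigoplus_{\vec{c}\, \in\, \Int({\Delta})}
                S^{B^{*}(\vec{c}\,)}
\]
collapses, again by $\rH_{k}(\Delta)=0$, to the direct sum of Specht modules indexed by the interior faces of $\Delta$, which is exactly what is claimed.

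There is essentially no obstacle here: the entire content of the corollary is packaged inside Theorem~\ref{theorem_manifold_homology}, and the only thing to check is the standard topological fact that a ball has trivial reduced homology. Consequently, the proof reduces to a one-line appeal to the theorem together with the vanishing of $\rH_{*}(\Delta)$, and no further use of Mayer--Vietoris sequences, Quillen's fiber lemma, or the structure of $Q_{\Delta}^{*}$ is required beyond what has already been developed.
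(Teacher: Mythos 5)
Your proposal matches the paper exactly: the paper states Corollary~\ref{corollary_ball} as an immediate consequence of Theorem~\ref{theorem_manifold_homology}, and the only input needed is that a $k$-ball is contractible, so all reduced homology groups $\rH_{i}(\Delta)$ vanish. Your argument is correct and is precisely the intended one.
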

Next we obtain a result about
$\triangle(\Pistar_{\Delta}-\{\hat{1}\})$ 
when $\Delta$ is shellable.
\begin{proposition}
\label{proposition_pure_shellable}
Suppose that $\Delta$ is a shellable complex of dimension $k$.
Then the order complex $\triangle(\Pistar_{\Delta}-\{\ho\})$
only has reduced homology in dimension $k$
given by
$$\rH_{k}(\triangle(\Pistar_{\Delta}-\{\ho\}))
   \cong_{\SSSS_{n-1}}
\bigoplus_{\vec{c}\,\in \Delta}
\rbeta_{k - |\vec{c}\,| + 1}(\link_{\vec{c}}(\Delta)) \cdot
S^{B^{*}(\vec{c}\,)}  . $$
\end{proposition}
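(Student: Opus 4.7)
The plan is to deduce this directly from Theorem~\ref{theorem_main_theorem_order_complex} by analyzing how shellability of $\Delta$ controls the reduced homology of each link $\link_{\vec{c}\,}(\Delta)$ appearing on the right-hand side.

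First I would invoke the standard fact that for a pure shellable complex $\Delta$ of dimension $k$, every link $\link_{\vec{c}\,}(\Delta)$ is again pure shellable, and by the dimension shift of equation~\eqref{equation_dimension} has dimension exactly $k-|\vec{c}\,|+1$. Consequently $\link_{\vec{c}\,}(\Delta)$ is homotopy equivalent to a wedge of spheres all of dimension $k-|\vec{c}\,|+1$, so that
\[
\rH_{j}(\link_{\vec{c}\,}(\Delta)) = 0 \quad \text{for } j \neq k-|\vec{c}\,|+1,
\]
and in top dimension $\rH_{k-|\vec{c}\,|+1}(\link_{\vec{c}\,}(\Delta))$ is a vector space of dimension $\rbeta_{k-|\vec{c}\,|+1}(\link_{\vec{c}\,}(\Delta))$.

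Next I would plug this vanishing into Theorem~\ref{theorem_main_theorem_order_complex}. In the decomposition
\[
\rH_{i}(\triangle(\Pistar_{\Delta}-\{\ho\}))
\cong_{\SSSS_{n-1}}
\bigoplus_{\vec{c}\,\in\Delta}
\rH_{i-|\vec{c}\,|+1}(\link_{\vec{c}\,}(\Delta)) \otimes S^{B^{*}(\vec{c}\,)},
\]
the $\vec{c}$-summand is nonzero only when $i-|\vec{c}\,|+1 = k-|\vec{c}\,|+1$, i.e.\ when $i=k$. Hence the reduced homology of $\triangle(\Pistar_{\Delta}-\{\ho\})$ is concentrated in dimension~$k$.

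Finally, for $i=k$, I would use Remark~\ref{remark_action}: the symmetric group $\SSSS_{n-1}$ acts trivially on $\Delta$ and hence on each link $\link_{\vec{c}\,}(\Delta)$, so the tensor product $\rH_{k-|\vec{c}\,|+1}(\link_{\vec{c}\,}(\Delta)) \otimes S^{B^{*}(\vec{c}\,)}$ collapses, as an $\SSSS_{n-1}$-module, to the direct sum of $\rbeta_{k-|\vec{c}\,|+1}(\link_{\vec{c}\,}(\Delta))$ copies of $S^{B^{*}(\vec{c}\,)}$, yielding the claimed formula. The only conceptual point requiring care is the invocation of shellability of links; the rest is a clean substitution into the main theorem, so I do not expect a serious obstacle.
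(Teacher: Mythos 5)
Your proposal is correct and follows essentially the same route as the paper: use equation~\eqref{equation_dimension} to compute the dimension of each link, invoke shellability of links to concentrate their homology in top degree, and substitute into Theorem~\ref{theorem_main_theorem_order_complex}. You are slightly more explicit than the paper on two points the paper leaves implicit --- that purity of $\Delta$ is needed for the link-dimension calculation to pin down a single degree, and that the trivial $\SSSS_{n-1}$-action on the link homology (Remark~\ref{remark_trivial_action}) is what turns the tensor factor into a Betti-number multiplicity --- but these are exactly the facts the paper relies on, so there is no real difference in substance.
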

\begin{proof}
Note that the face $\vec{c}$ has dimension
$|\vec{c}\,|-2$. Hence the link
$\link_{\vec{c}}(\Delta)$ has dimension
$k-\dim(\vec{c}\,)-1 = k - |\vec{c}\,| + 1$,
by equation~\eqref{equation_dimension}.
Since the link is shellable, all of its reduced homology
occurs
in dimension $k - |\vec{c}\,| + 1$ and this contributes
only to the reduced homology of dimension~$k$
of $\triangle(\Pistar_{\Delta} - \{\ho\})$ by
Remark~\ref{remark_dimension_shift}.
Lastly, the betti number $\rbeta_{k-|\vec{c}\,|+1}$ 
is explained by the fact that a shellable complex has the homotopy type
of a wedge of spheres of the same dimension.
\end{proof}

\section{The representation ring}
\label{section_representation_ring}

The {\em representation ring} $R(G)$ of a group $G$
is the free abelian group with
generators given by representations $V$ of~$G$
modulo the subgroup generated by $V + W - V \oplus W$.
Elements of the representation ring are called virtual representations 
because summands can have negative coefficients. For finite groups, 
complete reducibility implies $R(G)$ is just the free abelian group
generated by 
the irreducible representations~$V$ of~$G$.

\begin{remark}
\label{remark_trivial_action}
{\rm
Suppose $G$ acts trivially on the space $V$.
Then $V \otimes W \cong_{G} \dim(V) \cdot W$
in the representation ring~$R(G)$.
}
\end{remark}

\begin{proof}
Since $G$ acts trivially on $V$ we know that
$V \cong_{G} \mathbb{C}^{\dim(V)}$.
Thus,
$V\otimes W \cong_{G} \mathbb{C}^{\dim(V)} \otimes W
\cong_{G} \dim(V) \cdot W.$
\end{proof}

In the representation ring we can compute the
alternating sum of the homology groups
of $\triangle(\Pistar_{\Delta}-\{\ho\})$,
which we do in the following proposition.
This can be seen as $\SSSS_{n-1}$-analogue of
the reduced Euler characteristic.
\begin{proposition}
\label{proposition_virtual}
As virtual $\SSSS_{n-1}$-representations we have that
$$ \bigoplus_{i \geq -1} (-1)^{i} \cdot \rH_{i}(\triangle(\Pistar_{\Delta}-\{\ho\}))
\cong
\bigoplus_{\vec{c}\, \in \Delta}
(-1)^{|\vec{c}\,| - 1} \cdot
\rchi(\link_{\vec{c}\,}(\Delta))
   \cdot
        S^{B^{*}(\vec{c}\,)}   .
$$
\end{proposition}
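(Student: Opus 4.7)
The plan is to take Theorem~\ref{theorem_main_theorem_order_complex} as our starting point and pass to the representation ring, then swap the order of summation. First, I would substitute the isomorphism
$$\rH_{i}(\triangle(\Pistar_{\Delta} - \{\ho\}))
\cong_{\SSSS_{n-1}}
\bigoplus_{\vec{c} \in \Delta}
\rH_{i-|\vec{c}\,|+1}(\link_{\vec{c}\,}(\Delta)) \otimes S^{B^{*}(\vec{c}\,)}$$
into the left-hand side of the claim, yielding a double sum of tensor products indexed by $i$ and $\vec{c}$, each weighted by $(-1)^{i}$.

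Next, I would simplify each tensor factor. By Remark~\ref{remark_action}, $\SSSS_{n-1}$ acts trivially on $\Delta$, hence trivially on the chain complex computing $\rH_{j}(\link_{\vec{c}\,}(\Delta))$, and therefore trivially on the reduced homology group itself. Remark~\ref{remark_trivial_action} then applies and gives
$$\rH_{i-|\vec{c}\,|+1}(\link_{\vec{c}\,}(\Delta)) \otimes S^{B^{*}(\vec{c}\,)}
\cong_{\SSSS_{n-1}}
\rbeta_{i-|\vec{c}\,|+1}(\link_{\vec{c}\,}(\Delta)) \cdot S^{B^{*}(\vec{c}\,)}$$
in the representation ring $R(\SSSS_{n-1})$, where $\rbeta$ denotes the reduced Betti number.

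Finally, I would interchange the two summations and reindex. For each fixed $\vec{c} \in \Delta$, substitute $j = i - |\vec{c}\,| + 1$; the sign $(-1)^{i}$ becomes $(-1)^{|\vec{c}\,|-1} \cdot (-1)^{j}$, and the inner sum becomes
$$\sum_{j \geq -1} (-1)^{j} \cdot \rbeta_{j}(\link_{\vec{c}\,}(\Delta)) = \rchi(\link_{\vec{c}\,}(\Delta)),$$
the reduced Euler characteristic. Pulling the sign $(-1)^{|\vec{c}\,|-1}$ outside and summing over $\vec{c} \in \Delta$ gives exactly the claimed identity in $R(\SSSS_{n-1})$.

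There is no genuine obstacle here; the only mild point to verify is that the bounds of summation on $i$ and on $j$ are compatible (since $\rH_{i-|\vec{c}\,|+1}(\link_{\vec{c}\,}(\Delta))$ vanishes outside the relevant range, extending either sum over all integers $\geq -1$ causes no change), and that the trivial action of $\SSSS_{n-1}$ on $\link_{\vec{c}\,}(\Delta)$ justifies the reduction of a tensor product to a multiplicity in $R(\SSSS_{n-1})$.
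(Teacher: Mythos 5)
Your proposal is correct and follows essentially the same route as the paper's first proof of Proposition~\ref{proposition_virtual}: substitute Theorem~\ref{theorem_main_theorem_order_complex}, use the trivial action of $\SSSS_{n-1}$ on the link homology to replace the tensor factor with a Betti-number multiplicity via Remark~\ref{remark_trivial_action}, reindex, and recognize the reduced Euler characteristic. The paper also gives a second, independent proof via the Hopf trace formula, but your argument matches the first one closely.
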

\begin{proof}
We begin the proof by applying alternating sums to both sides of
Theorem~\ref{theorem_main_theorem_order_complex}.
\begin{align*}
\bigoplus_{i \geq -1} (-1)^{i} \cdot \rH_{i}(\triangle(\Pistar_{\Delta}-\{\hat{1}\}))
& \cong
\bigoplus_{i \geq -1} (-1)^{i} \cdot
\bigoplus_{\vec{c}\, \in \Delta}
       \rH_{i-|\vec{c}\,|+1}(\link_{\vec{c}\,}(\Delta))
   \otimes
        S^{B^{*}(\vec{c}\,)}   \\
& \cong
\bigoplus_{\vec{c}\, \in \Delta}
\bigoplus_{i \geq -1} (-1)^{i} \cdot
       \rbeta_{i-|\vec{c}\,|+1}(\link_{\vec{c}\,}(\Delta))
   \cdot
        S^{B^{*}(\vec{c}\,)}   \\
& \cong
\bigoplus_{\vec{c}\, \in \Delta}
(-1)^{|\vec{c}\,| - 1} \cdot
\bigoplus_{j \geq -1} (-1)^{j} \cdot
       \rbeta_{j}(\link_{\vec{c}\,}(\Delta))
   \cdot
        S^{B^{*}(\vec{c}\,)}   \\
& \cong
\bigoplus_{\vec{c}\, \in \Delta}
(-1)^{|\vec{c}\,| - 1} \cdot
\rchi(\link_{\vec{c}\,}(\Delta))
   \cdot
        S^{B^{*}(\vec{c}\,)}   ,
\end{align*}
where the second step is by
Remark~\ref{remark_trivial_action},
since $\SSSS_{n-1}$ acts trivially on
$\rH_{i-|\vec{c}\,|+1}(\link_{\vec{c}\,}(\Delta))$.
In the last step we used that the alternating sum of
the Betti numbers is the reduced Euler characteristic.
\end{proof}

The next lemma is straightforward to prove using jeu-de-taquin;
see~\cite{Sagan} or~\cite[A.1.2]{EC2}. 
\begin{lemma}
\label{lemma_permutation_module_isomorphism}
The permutation module~$M^{B^{\#}(\vec{c}\,)}$
is isomorphic to the direct sum over
all border strip Specht modules~$S^{B^{*}(\vec{d}\,)}$
for~$\vec{d}\leq^{*}\vec{c}$,
that is,
$$ M^{B^{\#}(\vec{c}\,)}
\cong_{\SSSS_{n-1}}
\bigoplus_{\vec{d}\,\leq^{*}\,\vec{c}}S^{B^{*}(\vec{d}\,)} . $$
\end{lemma}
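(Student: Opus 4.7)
The plan is to reduce the lemma to the border strip analogue of Young's rule: for every composition $\alpha$ of $m$,
\begin{equation*}
M^{B(\alpha)}
\cong_{\SSSS_{m}}
\bigoplus_{\alpha \,\leq\, \beta} S^{B(\beta)},
\end{equation*}
where the sum ranges over all coarsenings $\beta$ of $\alpha$ in $\Comp(m)$. This identity admits a combinatorial proof via jeu-de-taquin on ribbon tabloids; see~\cite{Sagan} or~\cite[A.1.2]{EC2}. Assuming this input, the proof is a matter of matching the two sides.

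First I would identify $M^{B^{\#}(\vec{c}\,)}$ with $M^{B(\vec{c}/1)}$ as an $\SSSS_{n-1}$-module. By construction, $M^{B^{\#}(\vec{c}\,)}$ is the span inside $M^{B(\vec{c}\,)}$ of the tabloids that carry the entry $n$ in the northeasternmost box. Erasing this entry gives an $\SSSS_{n-1}$-equivariant bijection onto tabloids of shape $B(\vec{c}/1)$ filled with $\{1,\ldots,n-1\}$, producing the desired isomorphism. Applying the border strip Young's rule to $\vec{c}/1 \in \Comp(n-1)$ then gives
\begin{equation*}
M^{B^{\#}(\vec{c}\,)}
\cong_{\SSSS_{n-1}}
\bigoplus_{\vec{c}/1 \,\leq\, \vec{e}} S^{B(\vec{e}\,)}.
\end{equation*}

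The final step is a reindexing. The map $\vec{d}\mapsto\vec{d}-1$ is a bijection between those coarsenings $\vec{d}$ of $\vec{c}$ whose last part is at least $2$ and the coarsenings $\vec{e}$ of $\vec{c}/1$. When $c_{k}\geq 2$ every coarsening of $\vec{c}$ automatically has last part $\geq c_{k}\geq 2$, whereas when $c_{k}=1$ the relevant $\vec{d}$ are exactly the coarsenings in which the trailing $1$ has been merged with some preceding part. The remaining coarsenings of $\vec{c}$, namely those ending in $1$, contribute nothing because $S^{B^{*}(\vec{d}\,)}=0$ in that case. Substituting $\vec{e}=\vec{d}-1$ and invoking $B^{*}(\vec{d}\,)=B(\vec{d}-1)$ converts the sum into $\bigoplus_{\vec{d}\leq^{*}\vec{c}}S^{B^{*}(\vec{d}\,)}$, completing the argument.

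The main obstacle is the border strip Young's rule invoked in the first paragraph; all of the representation-theoretic content resides there, and one must carry out the jeu-de-taquin rectification (or, equivalently, verify the symmetric function identity expressing $h_{\alpha}$ as a sum of ribbon Schur functions $r_{\beta}$ over coarsenings $\beta$ of $\alpha$). The remaining steps are combinatorial bookkeeping; the only real subtlety is carefully distinguishing the two operations $\vec{c}-1$ and $\vec{c}/1$ and treating the coarsenings of $\vec{c}$ that end in $1$ separately.
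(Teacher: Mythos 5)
Your proof is correct and takes essentially the same route as the paper's: both reduce the lemma to the ribbon version of Young's rule, which the paper re-derives via the disconnected shape $A^{\#}(\vec{c}\,)$ and jeu-de-taquin slides while you cite it directly and carry out the $\vec{c}/1$ reindexing over $\Comp(n-1)$. Your treatment of the $c_{k}=1$ case and the vanishing of $S^{B^{*}(\vec{d}\,)}$ when $\vec{d}$ ends in $1$ matches the paper's conventions.
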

\begin{proof}
Recall that the border strip of shape $A^{\#}(\vec{c}\,)$ was defined
in Section~\ref{section_border_strips}.

We have the isomorphism
$S^{A^{\#}(\vec{c}\,)} \cong_{\SSSS_{n-1}} M^{A^{\#}(\vec{c}\,)}$
because the rows of the shape~$A(\vec{c}/1)$ are non-overlapping,
thus polytabloids of shape~$A(\vec{c}/1)$
are tabloids of shape~$A(\vec{c}/1)$.
Additionally, we have
$M^{A^{\#}(\vec{c}\,)}\cong_{\SSSS_{n-1}}M^{B^{\#}(\vec{c}\,)}$,
since tabloids are defined as row equivalence classes of tableaux
and $A(\vec{c}/1)$ and $B(\vec{c}/1)$ have the same rows.
Combining these two $\mathfrak{S}_{n-1}$-isomorphisms
yields $M^{B^{\#}(\vec{c}\,)}\cong S^{A^{\#}(\vec{c}\,)}$.

Now consider the $k-1$ empty boxes
situated to the left of every row in
the Specht module defined by the shape~$A^{\#}(\vec{c}\,)$,
but above the last box of the previous row.
For each of these boxes
perform
a jeu-de-taquin slide into this box.

For each slide, there are two alternatives.
If the slide is horizontal, it moves
the upper row one step to the left such that
the two rows overlap in one position.
If the slide is vertical then every
entry in the lower row moves one step up.

After performing all the $k-1$ slides the result
is a border shape of shape~$B^{\#}(\vec{c}\,)$,
where the composition~$\vec{c}$ is less
than or equal to the composition $\vec{d}$
in the dual order. 
\end{proof}

Proposition~\ref{proposition_virtual}
can also be proved using the Hopf trace formula;
see~\cite[Theorem 2.3.9]{Wachs_III}.

\begin{proof}[Second proof of Proposition~\ref{proposition_virtual}.]
Recall that
$\rH_{i}(\triangle(\Pistar_{\Delta}-\{\ho\})) \cong \rH_{i}(Q_{\Delta}^{*})$.
By applying the Hopf trace formula we have that
\begin{align*}
\bigoplus_{i \geq -1}
(-1)^{i} \cdot \rH_{i}(Q^{*}_{\Delta})
& \cong
\bigoplus_{i \geq -1}
(-1)^{i} \cdot C_{i}(Q^{*}_{\Delta}) \\
& \cong
\bigoplus_{\vec{d} \in \Delta}
(-1)^{|\vec{d}\,|} \cdot M^{B^{\#}(\vec{d}\,)} \\
& \cong
\bigoplus_{\vec{d} \in \Delta}
(-1)^{|\vec{d}\,|} \cdot
\bigoplus_{\vec{c}\, \leq^{*} \vec{d}}
S^{B^{*}(\vec{c}\,)} \\
& \cong
\bigoplus_{\vec{c} \in \Delta}
\sum_{\onethingatopanother{\vec{d} \geq^{*} \vec{c}}{\vec{d} \in \Delta}}
(-1)^{|\vec{d}\,|} \cdot
S^{B^{*}(\vec{c}\,)} \\
& \cong
\bigoplus_{\vec{c} \in \Delta}
(-1)^{|\vec{c}\,| - 1} \cdot
\sum_{\onethingatopanother{\vec{d} \geq^{*} \vec{c}}{\vec{d} \in \Delta}}
(-1)^{|\vec{d}\,| - |\vec{c}\,| - 1} \cdot
S^{B^{*}(\vec{c}\,)} .
\end{align*}
Notice that in the second isomorphism
we have used that the chain space~$C_{i}(Q_{\Delta}^{*})$ has basis given by 
all ordered set partitions into $i+2$ parts
with type in~$\Delta$.
This is equivalent
to the direct sum over all permutation modules~$M^{B^{\#}(\vec{d}\,)}$ where $\vec{d}\in\Delta$ 
is a composition of~$n$ into $i+2$~parts. The remaining step is to observe that the inner sum of the last line
is given by
the reduced Euler characteristic~$\rchi(\link_{\vec{c}}(\Delta))$. 
\end{proof}

We observe that in the case when
the order complex $\triangle(\Pistar_{\Delta} - \{\ho\})$
has all its reduced homology concentrated in
one dimension,
the second proof of Proposition~\ref{proposition_virtual}
which uses the Hopf trace formula
gives a shorter proof of our main result
Theorem~\ref{theorem_main_theorem_order_complex}.

Lastly, by taking dimension on both sides of
Proposition~\ref{proposition_virtual}
we obtain the reduced Euler characteristic
of~$\triangle(\Pistar_{\Delta} -\{\ho\})$.
\begin{corollary}
\label{corollary_Euler}
The reduced Euler characteristic of $\triangle(\Pistar_{\Delta}-\{\ho\})$
is given by
$$
\rchi(\triangle(\Pistar_{\Delta}-\{\ho\}))
=
\sum_{\vec{c}\, \in \Delta}
(-1)^{|\vec{c}\,| - 1} \cdot
       \rchi(\link_{\vec{c}\,}(\Delta))
   \cdot 
        \beta^{*}_{n}(\vec{c}\,)    .  
$$
\end{corollary}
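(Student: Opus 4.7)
The plan is to derive this corollary directly from Proposition~\ref{proposition_virtual} by passing from the representation ring $R(\SSSS_{n-1})$ to $\mathbb{Z}$ via the dimension functor. Dimension is additive over direct sums and respects the signs $(-1)^{i}$ coming from virtual representations, so applying $\dim(\cdot)$ to both sides of the isomorphism in Proposition~\ref{proposition_virtual} produces an equality of integers.

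First I would rewrite the left-hand side. Applying dimension to the virtual representation
$\bigoplus_{i\geq -1}(-1)^{i}\cdot\rH_{i}(\triangle(\Pistar_{\Delta}-\{\ho\}))$
yields the alternating sum of reduced Betti numbers, which is by definition the reduced Euler characteristic $\rchi(\triangle(\Pistar_{\Delta}-\{\ho\}))$. Next I would rewrite the right-hand side. The factor $\rchi(\link_{\vec{c}\,}(\Delta))$ is already an integer (not a representation), so it passes unchanged through the dimension map, and the dimension of the Specht module $S^{B^{*}(\vec{c}\,)}$ is $\beta_{n}^{*}(\vec{c}\,)$, as recorded in Section~\ref{section_border_strips}. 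Substituting these values gives the claimed identity.

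There is no real obstacle here: this corollary is a purely numerical shadow of Proposition~\ref{proposition_virtual}. The only subtlety worth flagging is that the scalars $\rchi(\link_{\vec{c}\,}(\Delta))$ on the right-hand side must be interpreted as integer multiplicities rather than as representations, which is legitimate because $\SSSS_{n-1}$ acts trivially on $\Delta$ and hence on each link (see Remark~\ref{remark_action}), so these homology groups contribute only via their dimensions when tensored against the Specht modules.
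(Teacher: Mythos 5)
Your proof is correct and matches the paper exactly: the paper also obtains the corollary by taking dimensions on both sides of Proposition~\ref{proposition_virtual}, using that $\dim S^{B^{*}(\vec{c}\,)} = \beta_{n}^{*}(\vec{c}\,)$.
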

This corollary extends Theorem~3.1
from~\cite{Ehrenborg_Readdy_I}.

\section{The homotopy type of $\Pistar_{\Delta}$}

We turn our attention to the homotopy type
of the order complex $\triangle(\Pistar_{\Delta} - \{\ho\})$.
By combining the poset fiber theorems
of Quillen~\cite{Quillen}
and Bj\"orner, Wachs and
Welker~\cite{Bjorner_Wachs_Welker}
we obtain the next result.
Recall that $*$ denotes the (free) join of complexes.
\begin{theorem}
The order complex of $\Pistar_{\Delta}-\{\ho\}$
is homotopy equivalent to 
the complex of ordered set partitions~$Q_{\Delta}^{*}$,
that is, 
$\triangle(\Pistar_{\Delta}-\{\ho\}) \simeq Q_{\Delta}^{*}$.
Furthermore, the following
homotopy equivalence holds:
$$
        Q^{*}_{\Delta}
\: \simeq \:
         \triangle(\Delta - \{(n)\})
\: \vee \:
         \{ Q^{*}_{\vec{c}} * \link_{\vec{c}}(\Delta)
               \: : \:
               \vec{c} \in \Delta - \{(n)\}   \}   ,
$$
where $\vee$ denotes identifying each vertex
$\vec{c}$ in $\triangle(\Delta - \{(n)\}$
with any vertex in~$Q^{*}_{\vec{c}}$.
In the case when the complex~$\Delta$ is connected
then
the homotopy equivalence simplifies to
$$
        Q^{*}_{\Delta}
\simeq
         \bigvee_{\vec{c} \in \Delta}
           Q^{*}_{\vec{c}} * \link_{\vec{c}}(\Delta) .
$$ 
\end{theorem}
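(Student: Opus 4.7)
The plan is to derive the theorem by chaining together two standard poset-topological tools. The first equivalence $\triangle(\Pistar_{\Delta}-\{\ho\}) \simeq Q^{*}_{\Delta}$ comes from Quillen's Fiber Lemma applied to the forgetful map $f : Q^{*}_{\Delta} - \{\ho\} \longrightarrow \Pistar_{\Delta} - \{\ho\}$. The hypothesis has already been verified in Proposition~\ref{proposition_forgetful}, which exhibits each fiber $f^{-1}(P_{\geq \pi})$ as a cone with apex $([n]-B_{k}, B_{k})$; consequently its order complex is contractible, and Quillen's Lemma applies directly.

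For the second, more substantive, equivalence I would apply the homotopy version of the Bj\"orner--Wachs--Welker poset fiber theorem to the type map
$$ \type : Q^{*}_{\Delta} - \{([n])\} \longrightarrow \Delta - \{(n)\} , $$
both posets being considered with the dualized order $\leq^{*}$. The connectivity hypothesis required by BWW is that for each $\vec{c}$, the fiber $\type^{-1}((\Delta-\{(n)\})_{\leq^{*}\vec{c}})=Q^{*}_{\vec{c}}$ is sufficiently highly connected relative to the length of the longest chain in $(\Delta-\{(n)\})_{<^{*}\vec{c}}$. This is exactly what Theorem~\ref{theorem_Ehrenborg_Jung} supplies: $Q^{*}_{\vec{c}}$ is a wedge of $(|\vec{c}\,|-2)$-dimensional spheres, hence $(|\vec{c}\,|-3)$-connected, which matches the dimension bound. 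The BWW theorem then produces the wedge
$$ Q^{*}_{\Delta} \: \simeq \: \triangle(\Delta-\{(n)\}) \vee \bigvee_{\vec{c} \in \Delta - \{(n)\}} Q^{*}_{\vec{c}} * \link_{\vec{c}}(\Delta) , $$
with the basepoint identifications prescribed in the statement (each $\vec{c}$-vertex of $\triangle(\Delta-\{(n)\})$ glued to a vertex of the corresponding fibered join).

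For the simplified form when $\Delta$ is connected, I would argue as follows. First, adjoin the $\vec{c}=(n)$ term: since $Q^{*}_{(n)}$ is a point and $\link_{(n)}(\Delta)=\Delta$, the join $Q^{*}_{(n)} * \link_{(n)}(\Delta)$ is the cone $C\Delta$, hence contractible, so adjoining it to the wedge does not change the homotopy type. Second, use the connectedness of $\triangle(\Delta-\{(n)\})$ to consolidate all the basepoints: the various vertex identifications can be performed at a single vertex up to homotopy, so the base piece $\triangle(\Delta-\{(n)\})$ is absorbed into the expanded wedge indexed by all $\vec{c} \in \Delta$.

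The routine pieces here are the application of Quillen's Lemma and the translation of Theorem~\ref{theorem_Ehrenborg_Jung} into the needed connectivity bound. The main obstacle I expect is bookkeeping: verifying that the wedge identifications produced by BWW exactly coincide with the vertex-identification convention described in the statement, and managing the $\vec{c}=(n)$ special case carefully in both the general and connected formulations. Once these identifications are pinned down, the theorem reduces to a straightforward combination of the two fiber theorems.
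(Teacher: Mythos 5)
Your overall strategy is exactly the paper's: Quillen's Fiber Lemma (via Proposition~\ref{proposition_forgetful}) for the first equivalence, and the homotopy version of the Bj\"orner--Wachs--Welker poset fiber theorem (their Theorem~1.1) applied to the type map for the second. The translation of Theorem~\ref{theorem_Ehrenborg_Jung} into the required $(|\vec{c}\,|-3)$-connectivity bound is also as the paper intends.

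However, your handling of the $\vec{c}=(n)$ term in the connected case contains a genuine error, and it is a load-bearing one. You assert that $Q^{*}_{(n)}$ is a point and hence $Q^{*}_{(n)}*\link_{(n)}(\Delta)$ is a cone $C\Delta$, which is contractible. This is not correct. The poset $Q^{*}_{(n)}$ has a single element $([n])$, but under the reverse order this is the \emph{minimal} element of the simplicial poset, i.e.\ it corresponds to the empty face. So $Q^{*}_{(n)}$ is the empty simplicial complex $\{\emptyset\}$, which is the identity for the join, not a point. Consequently $Q^{*}_{(n)}*\link_{(n)}(\Delta)=\link_{(n)}(\Delta)=\Delta$, which is homeomorphic to the base piece $\triangle(\Delta-\{(n)\})$ and is not in general contractible. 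Your conclusion that ``adjoining it to the wedge does not change the homotopy type'' would only be valid if it were contractible.

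This matters because your second step then tries to ``absorb'' the base piece $\triangle(\Delta-\{(n)\})$ using only the connectivity of $\Delta$, but a connected complex is not contractible and cannot be discarded. The correct account is that the $(n)$ term \emph{is} the base piece (both are homeomorphic to $\Delta$), so when $\Delta$ is connected and the basepoint identifications are consolidated into a single wedge point, the base summand $\triangle(\Delta-\{(n)\})$ is \emph{replaced by}, not erased alongside, the $\vec{c}=(n)$ term $Q^{*}_{(n)}*\link_{(n)}(\Delta)$. With the premise ``$Q^{*}_{(n)}$ is a point'' corrected to ``$Q^{*}_{(n)}$ is the empty complex,'' the rest of your outline goes through.
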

\begin{proof}
The first homotopy equivalence
follows by applying Quillen's fiber lemma
to the forgetful map $f$.
This yields 
$\triangle(\Pistar_{\Delta}-\{\ho\}) \simeq \triangle(Q_{\Delta}^{*}-\{\ho\})
\cong Q_{\Delta}^{*}$, since 
$\triangle(Q_{\Delta}^{*} - \{\ho\})$
is the barycentric subdivision of~$Q_{\Delta}^{*}$.

The second homotopy equivalence in both cases follows
by Theorem~1.1
in~\cite{Bjorner_Wachs_Welker}, with the same
reasoning as in the proof of Theorem~\ref{theorem_main_result}.
Furthermore,
when $\vec{c} = (n)$ then
the complex~$Q^{*}_{\vec{c}}$ is the empty complex,
which is the identity for the join.
\end{proof}

\begin{corollary}
Let $\Delta$ be a connected
simplicial complex.
Assume furthermore that each link
(including $\Delta$)
$\link_{\vec{c}}(\Delta)$ is a wedge of spheres.
Then the order complex $\triangle(\Pistar_{\Delta}-\{\ho\})$
is also a wedge of spheres.
Furthermore, the number
of $i$-dimensional spheres is given by the
sum
\begin{equation}
     \sum_{\vec{c} \in \Delta}
             \beta^{*}_{n}(\vec{c}\,)
           \cdot
              \rbeta_{i - |\vec{c}\,| + 1}(\link_{\vec{c}\,}(\Delta)) ,
\label{equation_i_spheres}
\end{equation}
where $\rbeta_{j}$ denotes the $j$th reduced Betti number.
\label{corollary_every_link_wedge_spheres}
\end{corollary}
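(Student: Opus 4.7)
The plan is to feed the homotopy equivalence of the preceding theorem directly into the Ehrenborg--Jung description of $Q^{*}_{\vec{c}}$, and then cash out the result using the elementary fact that a join of spheres is a sphere. Since $\Delta$ is assumed connected, the preceding theorem yields
$$
\triangle(\Pistar_{\Delta}-\{\ho\})
\simeq
Q^{*}_{\Delta}
\simeq
\bigvee_{\vec{c} \in \Delta}
Q^{*}_{\vec{c}} * \link_{\vec{c}}(\Delta) .
$$
Hence it suffices to show that each wedge summand is a wedge of spheres, and to count the resulting spheres by dimension.

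Next I would invoke Theorem~\ref{theorem_Ehrenborg_Jung}: for every $\vec{c} \in \Delta$ the complex $Q^{*}_{\vec{c}}$ is a wedge of $\beta^{*}_{n}(\vec{c}\,)$ spheres of dimension $|\vec{c}\,|-2$. By hypothesis, $\link_{\vec{c}}(\Delta)$ is also a wedge of spheres, say with $\rbeta_{j}(\link_{\vec{c}}(\Delta))$ spheres in dimension $j$. Two standard facts then finish the summand: the join distributes over wedges (for CW complexes with non-degenerate basepoints), and $S^{p} * S^{q} \simeq S^{p+q+1}$. Applying these to
$$
\Bigl(\bigvee_{\beta^{*}_{n}(\vec{c}\,)} S^{|\vec{c}\,|-2}\Bigr)
 \: * \:
\Bigl(\bigvee_{j} \bigvee_{\rbeta_{j}(\link_{\vec{c}}(\Delta))} S^{j}\Bigr)
$$
shows that $Q^{*}_{\vec{c}} * \link_{\vec{c}}(\Delta)$ is a wedge of spheres whose dimensions are given by $j + |\vec{c}\,| - 1$.

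Finally I would do the bookkeeping. Taking a wedge over $\vec{c} \in \Delta$ of wedges of spheres is again a wedge of spheres. Setting $j = i - |\vec{c}\,| + 1$, the total number of $i$-dimensional spheres contributed by the summand indexed by $\vec{c}$ equals $\beta^{*}_{n}(\vec{c}\,) \cdot \rbeta_{i - |\vec{c}\,| + 1}(\link_{\vec{c}}(\Delta))$, and summing over $\vec{c} \in \Delta$ produces formula~\eqref{equation_i_spheres}. Note that the $\vec{c} = (n)$ summand causes no trouble: $Q^{*}_{(n)}$ is the empty complex (the identity for the join), $\beta^{*}_{n}((n)) = 1$ via the identity permutation, and the contribution reduces to $\rbeta_{i}(\Delta)$.

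The only genuine subtlety is ensuring that the distributivity of join over wedges is applied in a form that is literally valid; once phrased for well-pointed CW complexes (which simplicial complexes certainly are, at any vertex), no obstruction remains and the argument is essentially bookkeeping on dimensions.
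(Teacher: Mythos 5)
Your proof is correct and follows exactly the route the paper intends: feed the wedge decomposition of the preceding homotopy theorem into Theorem~\ref{theorem_Ehrenborg_Jung}, use that the join of a $p$-sphere and a $q$-sphere is a $(p+q+1)$-sphere together with distributivity of join over wedge, and then do the dimension bookkeeping; the paper states this corollary without proof precisely because this is the obvious argument. Your handling of the $\vec{c}=(n)$ summand and the compositions with $\beta^{*}_{n}(\vec{c}\,)=0$ (which contribute contractible summands) is also in order.
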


Next we have the homotopy versions
of Corollaries~\ref{corollary_sphere}
and~\ref{corollary_ball}.
To prove the next two corollaries,
we are again using
the comment preceding Proposition~3.8.9 of~\cite{EC1}
to determine the reduced Betti numbers of the links.
\begin{corollary}
\label{corollary_homotopy_sphere}
Suppose that $\Delta$ is homeomorphic to a sphere of dimension $k$. 
Then the order complex $\triangle(\Pistar_{\Delta}-\{\ho\})$ 
is a wedge of $k$-dimensional spheres
and the number of spheres is given by the sum:
$$ \sum_{\vec{c}\in\Delta} \beta^{*}_{n}(\vec{c}\,). $$
\end{corollary}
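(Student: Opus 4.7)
The plan is to invoke Corollary~\ref{corollary_every_link_wedge_spheres} and evaluate the resulting sum. First I would verify its hypotheses when $\Delta$ is a $k$-sphere with $k \geq 1$: such a $\Delta$ is connected, and for each composition $\vec{c} \in \Delta$ the link $\link_{\vec{c}}(\Delta)$ is itself homeomorphic to a sphere. This uses the standard fact that in any triangulation of $S^k$ the link of a face of dimension $d$ is a sphere of dimension $k - d - 1$; translating via $\dim(\vec{c}\,) = |\vec{c}\,| - 2$ (from equation~\eqref{equation_dimension}), we get that $\link_{\vec{c}}(\Delta)$ is a sphere of dimension $k - |\vec{c}\,| + 1$. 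In particular each link is trivially a wedge of a single sphere, so both hypotheses of Corollary~\ref{corollary_every_link_wedge_spheres} are satisfied.

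Next I would plug these Betti numbers into the count formula in equation~\eqref{equation_i_spheres}. Since $\link_{\vec{c}}(\Delta)$ is a $(k - |\vec{c}\,| + 1)$-sphere, its only nonzero reduced Betti number is $\rbeta_{k - |\vec{c}\,| + 1}(\link_{\vec{c}}(\Delta)) = 1$. The summand $\beta^{*}_{n}(\vec{c}\,) \cdot \rbeta_{i-|\vec{c}\,|+1}(\link_{\vec{c}}(\Delta))$ is therefore nonzero precisely when $i - |\vec{c}\,| + 1 = k - |\vec{c}\,| + 1$, i.e., when $i = k$. Thus every sphere in the wedge has dimension exactly $k$, and the total count collapses to $\sum_{\vec{c} \in \Delta} \beta^{*}_{n}(\vec{c}\,)$, as claimed.

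The only genuine obstacle is the edge case $k = 0$, where $\Delta$ realizes geometrically as two disjoint points and fails the connectedness hypothesis of Corollary~\ref{corollary_every_link_wedge_spheres}. To handle this I would fall back on the more general homotopy equivalence $Q^{*}_{\Delta} \simeq \triangle(\Delta - \{(n)\}) \vee \{\, Q^{*}_{\vec{c}} * \link_{\vec{c}}(\Delta) : \vec{c} \in \Delta - \{(n)\} \,\}$ established immediately before Corollary~\ref{corollary_every_link_wedge_spheres}. For the two facets $\vec{c_1}, \vec{c_2}$ of $\Delta$, their links are empty (a $(-1)$-sphere) and $Q^{*}_{\vec{c_i}} * \emptyset = Q^{*}_{\vec{c_i}}$, which by Theorem~\ref{theorem_Ehrenborg_Jung} is a wedge of $0$-spheres. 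Wedging these with the $0$-sphere $\triangle(\Delta - \{(n)\})$ yields the same total count $\sum_{\vec{c} \in \Delta} \beta^{*}_{n}(\vec{c}\,)$, completing the argument in all dimensions.
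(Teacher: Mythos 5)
Your proof matches the paper's intended approach: the paper's entire argument for Corollary~\ref{corollary_homotopy_sphere} is the one-line remark that one should combine the preceding theorem (equivalently, Corollary~\ref{corollary_every_link_wedge_spheres}) with the fact from the comment preceding Proposition~3.8.9 in~\cite{EC1} about links of faces in a triangulated manifold. Your computation of the Betti numbers $\rbeta_{k-|\vec{c}\,|+1}(\link_{\vec{c}\,}(\Delta)) = 1$, concentrated in dimension $k-|\vec{c}\,|+1$, and the resulting collapse of formula~\eqref{equation_i_spheres} to $\sum_{\vec{c}\in\Delta}\beta^{*}_{n}(\vec{c}\,)$ is exactly what the paper has in mind. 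Your explicit treatment of the disconnected $k=0$ case via the general form of the homotopy decomposition is a careful detail that the paper leaves implicit.

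One caution on the justification you give for the hypotheses of Corollary~\ref{corollary_every_link_wedge_spheres}. You assert that in \emph{any} triangulation of $S^{k}$ the link of a $d$-face is a $(k-d-1)$-sphere. That is true for PL triangulations, but false in general: for $k\geq 5$ there are non-PL triangulations in which links can be homology spheres with nontrivial fundamental group, hence not spheres and not wedges of spheres. The source cited by the paper (Stanley's~\cite{EC1}) supplies only the weaker statement that the link has the \emph{homology} of a sphere of the indicated dimension, which is what Proposition~\ref{proposition_pure_shellable}/Theorem~\ref{theorem_manifold_homology} use. That weaker fact handles the Betti-number count but does not, as literally stated, certify the hypothesis of Corollary~\ref{corollary_every_link_wedge_spheres} that each link is a wedge of spheres. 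The corollary's conclusion nonetheless holds, since each join $Q^{*}_{\vec{c}}\ast\link_{\vec{c}\,}(\Delta)$ in the homotopy decomposition is (up to homotopy) an iterated suspension and thus a simply connected homology sphere, hence a genuine sphere by Hurewicz and Whitehead. The paper glosses over this point in the same way you do, so your write-up is faithful to the paper's argument; if you want it airtight you should either restrict to PL triangulations or insert the suspension/simple-connectivity remark.
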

\begin{corollary}
\label{corollary_homotopy_ball}
Suppose that $\Delta$ is homeomorphic to a ball of dimension $k$.
Then the order complex $\triangle(\Pistar_{\Delta}-\{\ho\})$ 
is a wedge of $k$-dimensional spheres
and the number of spheres is given by the sum:
$$ \sum_{\vec{c}\,\in\Int(\Delta)}  \beta^{*}_{n}(\vec{c}\,). $$
\end{corollary}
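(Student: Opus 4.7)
The plan is to deduce Corollary~\ref{corollary_homotopy_ball} directly from Corollary~\ref{corollary_every_link_wedge_spheres}, exactly as Corollary~\ref{corollary_homotopy_sphere} was deduced in the sphere case, by verifying the hypotheses of that corollary and then evaluating the Betti number sum~\eqref{equation_i_spheres}.

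First I would check that the hypotheses of Corollary~\ref{corollary_every_link_wedge_spheres} are satisfied. A $k$-dimensional ball $\Delta$ is connected. To see that each link is a wedge of spheres, I would appeal to the comment preceding Proposition~3.8.9 of~\cite{EC1}: for a face $\vec{c}$ of a manifold with boundary, the link $\link_{\vec{c}}(\Delta)$ is homeomorphic to a sphere of dimension $k - |\vec{c}\,| + 1$ if $\vec{c}$ lies in the interior of $\Delta$, and to a ball of the same dimension if $\vec{c}$ lies on the boundary. In both cases the link is a wedge of spheres, where a ball counts as the empty wedge and is contractible. For $\vec{c} = (n)$ the link is all of $\Delta$, again a ball and hence contractible.

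Next I would apply formula~\eqref{equation_i_spheres}. Every face $\vec{c}$ on $\partial \Delta$, as well as the face $(n)$, has a contractible link and therefore contributes zero to the sum. For every interior face $\vec{c} \neq (n)$, the link is a sphere of dimension $k - |\vec{c}\,| + 1$, so $\rbeta_{i - |\vec{c}\,| + 1}(\link_{\vec{c}}(\Delta))$ equals $1$ exactly when $i = k$ and vanishes otherwise. Summing gives $\sum_{\vec{c} \in \Int(\Delta)} \beta^{*}_{n}(\vec{c}\,)$ spheres in dimension $k$ and no spheres in any other dimension, which is the statement of the corollary.

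The argument is a clean specialization of an already-proved result, so there is no real obstacle; the only point requiring care is the bookkeeping distinguishing interior faces from boundary faces and the face $(n)$, but this is immediate from the manifold-with-boundary structure of a ball.
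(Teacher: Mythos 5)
Your proof is correct and matches the paper's intended argument: the paper explicitly states that Corollaries~\ref{corollary_homotopy_sphere} and~\ref{corollary_homotopy_ball} follow by combining Corollary~\ref{corollary_every_link_wedge_spheres} with the observation preceding Proposition~3.8.9 of~\cite{EC1} to identify the homotopy type of each link, which is precisely what you do. The bookkeeping for boundary faces, interior faces, and the empty face $(n)$ is handled correctly.
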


We end this section with a discussion of how
we can lift discrete Morse matchings
from the links of~$\Delta$ to the complex
of order set partitions~$Q^{*}_{\Delta}$.
\begin{definition}
For an ordered set partition
$\sigma = (C_{1}, C_{2}, \ldots, C_{k})$ of $n$,
where
$C_{i} = \{c_{i,1} < c_{i,2} < \cdots< c_{i,j_{i}}\}$
and
$|C_{i}| = j_{i}$,
define the permutation
$\perm(\sigma)\in\SSSS_{n}$ to be the elements
of the blocks listed in the order of the blocks,
that is,
$$
\perm(\sigma)
  =
c_{1,1}, c_{1,2}, \ldots, c_{1,j_{1}},
c_{2,1}, c_{2,2}, \ldots,
                                          c_{k,j_{k}} . $$
\end{definition}
Define the descent set of an ordered
set partition $\sigma$ to be
$\Des(\sigma) = \Des(\perm(\sigma))$.
Observe that the descent composition of an
ordered set partition is an order preserving
map from the poset of ordered set partitions~$Q_{n}$
to the poset of compositions~$\Comp_{n}$, that is,
$\Des: Q_{n}^{*} \longrightarrow \Comp(n)$ is a poset map.

\begin{lemma}
\label{lemma_patchwork}
Let $\Delta$ be a filter in the composition poset $\Comp(n)$.
For the order preserving map
$\Des : Q^{*}_{\Delta} \longrightarrow \Delta$
the poset fiber $\Des^{-1}(\vec{c}\,)$
is the (poset) direct sum of $\beta_{n}^{*}(\vec{c}\,)$
copies of the poset
$\link_{\vec{c}\,}(\Delta)
=
\{\vec{d} \in \Delta \: : \: \vec{d} \leq^{*} \vec{c}\,\}$.
\end{lemma}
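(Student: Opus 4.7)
The plan is to parameterize the fiber $\Des^{-1}(\vec{c})$ by pairs $(\alpha,\vec{d})$, where $\alpha\in\SSSS_{n}$ satisfies $\alpha(n)=n$ and $\Des(\alpha)=\vec{c}$, and $\vec{d}\in\link_{\vec{c}}(\Delta)$, and then to upgrade this set-theoretic bijection to a poset isomorphism with the direct sum $\beta_{n}^{*}(\vec{c}\,)\cdot\link_{\vec{c}}(\Delta)$. Given $\sigma\in\Des^{-1}(\vec{c})$, set $\alpha=\perm(\sigma)$ and $\vec{d}=\type(\sigma)$. Since $n$ lies in the last block of $\sigma$ and the entries of each block are written in increasing order inside $\perm(\sigma)$, the element $n$ is the last letter of $\alpha$, so $\alpha(n)=n$. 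By definition $\Des(\alpha)=\vec{c}$ and $\sigma=\sigma(\alpha,\vec{d})$. Because each block of $\sigma$ appears in $\perm(\sigma)$ already in increasing order, every descent of $\alpha$ must occur at a break position of $\vec{d}$, which is exactly the statement that $\vec{d}$ refines $\vec{c}$, that is, $\vec{d}\in\link_{\vec{c}}(\Delta)$. Conversely any such pair $(\alpha,\vec{d})$ produces an element $\sigma(\alpha,\vec{d})$ in the fiber, giving the claimed bijection, with $\beta_{n}^{*}(\vec{c}\,)$ admissible $\alpha$ in all.

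For a fixed $\alpha$ I would then show that $\vec{d}\mapsto\sigma(\alpha,\vec{d})$ is an order isomorphism from $\link_{\vec{c}}(\Delta)$ onto its image in $Q_{\Delta}^{*}$. Refining $\vec{d}$ in $\Comp(n)$ by splitting a part $d_{i}$ into two consecutive parts corresponds to splitting the $i$th block of $\sigma(\alpha,\vec{d})$ along its $\alpha$-prefix, which is a cover relation in $Q_{n}$; dualizing to $\leq^{*}$ yields the desired order isomorphism between $\link_{\vec{c}}(\Delta)$ and the $\alpha$-component of the fiber.

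The main obstacle is showing that distinct $\alpha$-components are mutually incomparable in the subposet of $Q_{\Delta}^{*}$ induced on $\Des^{-1}(\vec{c})$. I would argue by contradiction. Suppose $\sigma\leq^{*}\tau$ with $\sigma,\tau\in\Des^{-1}(\vec{c})$; equivalently, the blocks of $\tau$ refine those of $\sigma$. If $\perm(\sigma)\neq\perm(\tau)$, then some block $B$ of $\sigma$ is split by $\tau$ into sub-blocks $B_{1},B_{2},\ldots,B_{r}$ in order, and for at least one index $k$ we must have $\max(B_{k})>\min(B_{k+1})$; otherwise listing the $B_{j}$ in order, each in increasing order, would reproduce $B$ in increasing order and we would get $\perm(\sigma)=\perm(\tau)$. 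But then $\perm(\tau)$ has a descent at the break position of $\tau$ separating $B_{k}$ from $B_{k+1}$, a position strictly interior to $B$ and hence not a break position of $\sigma$. This descent therefore lies in $\Des(\tau)$ but not in $\Des(\sigma)$, contradicting $\Des(\sigma)=\Des(\tau)=\vec{c}$. Hence $\perm(\sigma)=\perm(\tau)$, every comparable pair in the fiber lies in a single $\alpha$-component, and combined with the previous paragraph this identifies $\Des^{-1}(\vec{c})$ with the direct sum of $\beta_{n}^{*}(\vec{c}\,)$ copies of $\link_{\vec{c}}(\Delta)$, as claimed.
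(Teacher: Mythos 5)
Your proof is correct and follows essentially the same approach as the paper: both identify the fiber with the pairs $(\alpha,\vec{d}\,)$ where $\alpha$ satisfies $\Des(\alpha)=\vec{c}$ and $\alpha(n)=n$ and $\vec{d}$ ranges over the link, verify that $\vec{d}\mapsto\sigma(\alpha,\vec{d}\,)$ is an order isomorphism for fixed $\alpha$, and then establish incomparability across distinct $\alpha$-components. Your incomparability argument (deriving a spurious descent in $\perm(\tau)$ at an interior break of some merged block) is a more detailed version of the paper's terse appeal to examining the first increasing run where $\alpha$ and $\beta$ differ, but it is the same underlying idea.
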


\begin{proof}
Let $\sigma$
be an ordered set partition and assume that
the $i$th block $C_{i}$ is the disjoint union
of the two non-empty sets $X$ and $Y$ such that
$\max(X) < \min(Y)$.
Observe now that the two ordered set partitions~$\sigma$
and
$(\ldots, C_{i-1}, X, Y, C_{i+1}, \ldots)$
have the same descent composition,
since there is no descent 
between blocks~$X$ and~$Y$.

Let $\vec{c}$ be a composition in the filter $\Delta$.
For any ordered set partition $\tau$
in the fiber $\Des^{-1}(\vec{c}\,)$
we know that $\tau$ has descent composition $\vec{c}$,
that is, $\Des(\tau)=\vec{c}$.
As $\tau$ can only have descents between blocks,
we know the minimal elements of
$\Des^{-1}(\vec{c}\,)$ have the form $\sigma(\alpha,\vec{c}\,)$
for $\alpha\in\SSSS_{n}$ satisfying $\Des(\alpha)=\vec{c}$
and $\alpha_{n}=n$.
To remain in the same
fiber as these minimal elements,
we are free to break blocks
as in previous paragraph,
hence
$$ \Des^{-1}(\vec{c}\,)
     =
  \{\sigma(\alpha,\vec{d}\,)
      \: : \:
  \vec{c} \leq^{*} \vec{d},
  \:      \vec{d} \in \Delta,
  \:      \Des(\alpha)=\vec{c},\,\alpha_{n}=n\}
  .  $$

Notice that the poset $\link_{\vec{c}\,}(\Delta)$
is isomorphic to
the poset
$\{\sigma(\alpha,\vec{d}\,)\: : \: \vec{d}\leq\vec{c}, \, \vec{d} \in \Delta\}$
for a fixed permutation $\alpha\in\SSSS_{n}$
satisfying $\Des(\alpha)=\vec{c}$ and $\alpha_{n}=n$.
Finally, for a composition
$\vec{e} \in \link_{\vec{c}\,}(\Delta)$
and a permutation $\beta \in \SSSS_{n}$
different from $\alpha$
such that
$\Des(\beta)=\vec{c}$ and $\beta_{n}=n$,
consider the two ordered set partitions
$\sigma(\beta,\vec{e}\,)$ and $\sigma(\alpha,\vec{d}\,)$,
where $\vec{d} \in \link_{\vec{c}\,}(\Delta)$.
By examining the first increasing run in the permutations
$\alpha$ and $\beta$ where their elements differ, we conclude
that the two ordered set partitions
$\sigma(\beta,\vec{e}\,)$ and $\sigma(\alpha,\vec{d}\,)$
are incomparable.
Thus the fiber~$\Des^{-1}(\vec{c}\,)$ is a direct sum of
copies of the poset $\link_{\vec{c}\,}(\Delta)$,
one for each permutation $\alpha$ in $\SSSS_{n}$ satisfying
$\Des(\alpha)=\vec{c}$ and $\alpha(n)=n$.
\end{proof}

\begin{theorem}
Let $\Delta$ be a simplicial complex of compositions
such that every link $\link_{\vec{c}\,}(\Delta)$ has a
Morse matching where the critical cells are facets
of the link $\link_{\vec{c}\,}(\Delta)$. Then the simplicial
complex $Q_{\Delta}^{*}$ has a Morse matching, where 
the number of $i$-dimensional critical cells is given by
equation~\eqref{equation_i_spheres}.
\label{theorem_wedge_of_spheres}
\end{theorem}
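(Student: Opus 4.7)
The plan is to construct the Morse matching on $Q_{\Delta}^{*}$ by patching together the given link matchings, using the order preserving map $\Des : Q_{\Delta}^{*} \longrightarrow \Delta$ from Lemma~\ref{lemma_patchwork} as the scaffold and invoking the Patchwork Theorem for discrete Morse matchings (Jonsson, or the Cluster Lemma of Kozlov). Recall from Lemma~\ref{lemma_patchwork} that each fiber $\Des^{-1}(\vec{c}\,)$ is a disjoint union of $\beta_{n}^{*}(\vec{c}\,)$ copies of $\link_{\vec{c}\,}(\Delta)$, one for each permutation $\alpha \in \SSSS_{n}$ with $\Des(\alpha) = \vec{c}$ and $\alpha(n) = n$. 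By hypothesis each $\link_{\vec{c}\,}(\Delta)$ carries a Morse matching with critical cells that are facets, so I would transplant this matching onto each of the $\beta_{n}^{*}(\vec{c}\,)$ fiber copies. The Patchwork Theorem then promotes the disjoint union of these fiberwise matchings to an acyclic matching on $Q_{\Delta}^{*}$ whose set of critical cells is precisely the disjoint union of the fiberwise critical cells.

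Next I would translate the dimension count. A critical cell of $\link_{\vec{c}\,}(\Delta)$ of dimension $j$ corresponds to an ordered set partition $\sigma(\alpha,\vec{d}\,) \in Q_{\Delta}^{*}$ where $\vec{d}$ is a facet of $\link_{\vec{c}\,}(\Delta)$ of dimension $j$. Using equation~\eqref{equation_dimension} we have $|\vec{d}\,| = j + |\vec{c}\,| + 1$, so $\sigma(\alpha,\vec{d}\,)$ has dimension $|\vec{d}\,| - 2 = j + |\vec{c}\,| - 1$ in $Q_{\Delta}^{*}$. Setting this to $i$ gives $j = i - |\vec{c}\,| + 1$. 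To finish, I would argue that the number of $j$-dimensional critical cells in $\link_{\vec{c}\,}(\Delta)$ equals $\rbeta_{j}(\link_{\vec{c}\,}(\Delta))$. This follows from the facet hypothesis: if $G$ is a critical $(j-1)$-cell and appears in the Morse boundary of a critical $j$-cell, then $G$ must be a face of some $j$-cell in the link, contradicting the fact that $G$ is itself a facet. Hence the Morse differential between critical cells vanishes and the Morse inequalities become equalities. Summing over $\vec{c} \in \Delta$ and over the $\beta_{n}^{*}(\vec{c}\,)$ fiber copies delivers equation~\eqref{equation_i_spheres}.

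The chief obstacle is verifying that the Patchwork Theorem applies cleanly in this setting: that every matching edge is a cover relation in $Q_{\Delta}^{*}$ living inside a single fiber of $\Des$, and that any hypothetical cycle in the modified Hasse diagram would have to remain within a fiber, whence acyclicity on the fibers forces global acyclicity. A subsidiary point, but worth writing out carefully, is the facet-implies-perfect-matching reduction, which rests on the contrapositive observation that facets cannot serve as endpoints of gradient paths emanating from higher dimensional critical cells.
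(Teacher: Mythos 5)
Your proposal is correct and follows the paper's own proof essentially step for step: both apply the Patchwork Theorem to the order-preserving map $\Des : Q_{\Delta}^{*} \longrightarrow \Delta$, use Lemma~\ref{lemma_patchwork} to see each fiber as $\beta_{n}^{*}(\vec{c}\,)$ disjoint copies of $\link_{\vec{c}\,}(\Delta)$, transplant the link matchings onto the fiber copies, and carry out the same dimension shift via equation~\eqref{equation_dimension}. The one detail you spell out that the paper leaves implicit is that the facet hypothesis forces the Morse differential between critical cells to vanish, so the number of $j$-dimensional critical cells in $\link_{\vec{c}\,}(\Delta)$ equals $\rbeta_{j}(\link_{\vec{c}\,}(\Delta))$ exactly; that observation is needed to justify replacing critical-cell counts by Betti numbers in equation~\eqref{equation_i_spheres}, and it is worth keeping.
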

\begin{proof}
Apply the
Patchwork Theorem~\cite[Theorem~11.10]{Kozlov_book}
to the poset map
$\Des:Q_{\Delta}^{*} \longrightarrow \Delta$.
By Lemma~\ref{lemma_patchwork}, 
each fiber is a direct sum of
links of $\Delta$,
each of which has a Morse matching.
Each critical cell is a facet.
Hence $Q_{\Delta}^{*}$ is homotopy equivalent to a wedge of spheres,
and thus the order complex $\triangle(\Pistar_{\Delta}-\{\ho\})$ is 
also a wedge of spheres.
The number of $i$-dimensional critical cells 
of $Q_{\Delta}^{*}$ in the fiber $\Des^{-1}(\vec{c}\,)$
is the number of critical cells of dimension $i-|\vec{c}\,|+1$
in the link $\link_{\vec{c}\,}(\Delta)$ times the number of
copies of the link, that is $\beta^{*}_{n}(\vec{c}\,)$.
By summing over all compositions $\vec{c}$ in $\Delta$
the result follows.
\end{proof}

Now suppose that $\Delta$ is a non-pure shellable complex
in the sense of~\cite{Bjorner_Wachs_non_pure_I}.
Then each link 
in $\Delta$ is also shellable, and thus for each link there exists
a discrete Morse matching whose critical
cells are facets of the link;
see Chapter~12 of~\cite{Kozlov_book}.

\begin{corollary}
\label{shellable_order_complex}
If $\Delta$ is a non-pure shellable complex
then Theorem~\ref{theorem_wedge_of_spheres}
applies and the simplicial complex~$Q_{\Delta}^{*}$ has
a Morse matching where the number of $i$-dimensional
critical cells is given by equation~\eqref{equation_i_spheres}.
\end{corollary}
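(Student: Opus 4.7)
The plan is to apply Theorem~\ref{theorem_wedge_of_spheres} verbatim, so the task reduces to verifying its hypotheses for a non-pure shellable complex~$\Delta$. The two things to check are: (i)~for every composition $\vec{c} \in \Delta$, the link $\link_{\vec{c}}(\Delta)$ carries a discrete Morse matching, and (ii)~the critical cells of that matching are facets of $\link_{\vec{c}}(\Delta)$. Once both items are established, the conclusion about the critical cells being counted by equation~\eqref{equation_i_spheres} is immediate from Theorem~\ref{theorem_wedge_of_spheres}.

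For (i), I would invoke the fact (noted in the paragraph preceding the corollary statement) that every link in a non-pure shellable complex is itself non-pure shellable. This is a standard result from Bj\"orner--Wachs~\cite{Bjorner_Wachs_non_pure_I}, so it can simply be cited. For (ii), I would cite Chapter~12 of~\cite{Kozlov_book}, which shows that a (non-pure) shellable complex admits a discrete Morse matching whose critical cells are exactly the facets contributing to the homotopy type, i.e.\ the homology facets of the shelling. In particular, all critical cells are facets, which is precisely the hypothesis of Theorem~\ref{theorem_wedge_of_spheres}.

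With (i) and (ii) in hand, Theorem~\ref{theorem_wedge_of_spheres} applies directly to $\Delta$: the complex $Q_{\Delta}^{*}$ inherits a Morse matching whose $i$-dimensional critical cells are enumerated by
\[
\sum_{\vec{c} \in \Delta}
\beta^{*}_{n}(\vec{c}\,)
\cdot
\rbeta_{i - |\vec{c}\,| + 1}(\link_{\vec{c}\,}(\Delta)),
\]
which is exactly equation~\eqref{equation_i_spheres}. There is no genuine obstacle here, as the corollary is really an unpacking of the two cited facts combined with the previous theorem; the only subtlety worth a sentence of discussion is to note that in the non-pure setting the relevant Morse matching (and associated wedge decomposition) is the one produced by the shelling restriction map, so that the critical cells genuinely are top-dimensional faces of the individual links rather than of $\Delta$ itself.
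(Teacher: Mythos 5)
Your proposal is correct and follows the same two-step argument as the paper's own proof: (i) each link of a non-pure shellable complex is again non-pure shellable, and (ii) a non-pure shellable complex admits a discrete Morse matching whose critical cells are facets, with both facts cited from Kozlov and Bj\"orner--Wachs; Theorem~\ref{theorem_wedge_of_spheres} then applies verbatim. No substantive difference from the paper's proof.
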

\begin{proof}
This follows directly from two observations:
(i)
a non-pure shellable complex has a Morse matching with all critical cells being facets,
(ii)
each link of a non-pure shellable complex
is non-pure shellable.
See Section~12.1 in~\cite{Kozlov_book}.
\end{proof}

\section{Examples}
\label{section_examples}

In this section we use
Theorem~\ref{theorem_main_theorem_order_complex} and its consequences from Section~\ref{section_consequences}
to derive results about various filters~$\Pistar_{\Delta}$. 
\begin{example}
\label{example_ball}
{\rm
Let $\vec{d}$ be a composition of $n$
into $k+2$ parts and let $\Delta$ be
the simplex generated by~$\vec{d}$.
Since the simplex is homeomorphic to a $k$-dimensional ball, by Corollary~\ref{corollary_ball}
we have that the $k$th reduced homology group
is given by
\begin{align*}
\rH_{k}(\triangle(\Pistar_{\Delta}-\{\ho\}))
&
\cong_{\SSSS_{n-1}}
S^{B^{*}(\vec{d}\,)} ,
\end{align*}
since the only face of $\Delta$ in
the interior of $\Delta$ is the facet $\vec{d}$.
This example illustrates Theorems~5.3
and~7.4
in~\cite{Ehrenborg_Jung}.
Moreover, 
this is the base case of the authors' proof of 
Theorem~\ref{theorem_main_result} using 
the Mayer--Vietoris sequence.
}
\end{example}

\begin{example}
\label{example_sphere}
{\rm
Let $\vec{d}$ be a composition
into $k+3$ parts and let $\Delta$ be
the boundary of the simplex generated by~$\vec{d}$,
that is, $\Delta$ is homeomorphic to a $k$-dimensional
sphere. Then $\Delta$ is shellable
and the order complex $\triangle(\Pistar_{\Delta} -\{\ho\})$
is a wedge of $k$-dimensional spheres.
Now by Corollary~\ref{corollary_sphere}
we have that the $k$th reduced homology group
is given by
$$
\rH_{k}(\triangle(\Pistar_{\Delta}-\{\ho\}))
\cong_{\SSSS_{n-1}}
\bigoplus_{\vec{c} <^{*} \vec{d}}  S^{B^{*}(\vec{c}\,)}
\cong_{\SSSS_{n-1}}
M^{B^{\#}(\vec{d}\,)}
/
S^{B^{*}(\vec{d}\,)} .
$$
Note that we have used
Lemma~\ref{lemma_permutation_module_isomorphism}
to express the permutation module $M^{B^{\#}(\vec{d}\,)}$
as a direct sum of Specht modules.
}
\end{example}

\begin{example}
\label{example_k_r}
{\rm
Let $\vec{d}$ be a composition of $n$ into $k+r$ parts, where $r\geq1$.
Let $\Delta$ be the $k$-skeleton of the simplex generated
by the composition~$\vec{d}$.
Note that $\Delta$ is shellable, so by
Corollary~\ref{shellable_order_complex} 
the order complex $\triangle(\Pistar_{\Delta}-\{\ho\})$ is a wedge of
$k$-dimensional spheres. By
Proposition~\ref{proposition_pure_shellable} we have 
the following calculation in the representation ring:
\begin{align}
\rH_{k}(\triangle(\Pistar_{\Delta}-\{\ho\}))
& \cong_{\SSSS_{n-1}}
\bigoplus_{\substack{\vec{c}\,<\vec{d}\\ |\vec{c}\,|\leq k+2}}
\binom{k+r-|\vec{c}\,|-1}{k-|\vec{c}\,|+2}\cdot S^{B^{*}(\vec{c}\,)} .
\label{equation_k_r}
\end{align}
Here we have used that
$\rbeta_{k-|\vec{c}\,|+1}(\link_{\vec{c}\,}(\Delta))
= (-1)^{k-|\vec{c}\,|+1} \cdot \rchi(\link_{\vec{c}\,}(\Delta))$
since $\link_{\vec{c}\,}(\Delta)$ is shellable.
Lastly, we also used a basic identity on
the alternating sum of binomial coefficients,
which arises in computing the Euler characteristic of the link.
}
\end{example}

\begin{example}
[The $d$-divisible partition lattice with minimal elements removed]
\label{example_boundary_simplex}
{\rm
Let $n$ be a multiple of $d$. Consider the boundary
of the simplex
generated by the composition~$(d,d,\ldots,d)$ of~$n$.
Then~$\Delta$ is a $(n/d-3)$-dimensional simplicial complex,
and $\Pistar_{\Delta}$ is the $d$-divisible partition lattice
without its minimal elements.
By applying Example~\ref{example_sphere}
we obtain that
$\triangle(\Pistar_{\Delta} - \{\ho\})$
is a wedge of $(n/d-3)$-dimensional spheres
and
the reduced homology group is given by
$\rH_{n/d-3}(\Pistar_{\Delta}-\{\ho\})
\cong_{\SSSS_{n-1}}
M^{B^{\#}(d,\ldots,d,d)}
/
S^{B^{*}(d,d,\ldots,d)}$.
}
\end{example}
Setting $d=1$ in the last example shows that the action of
$\SSSS_{n-1}$ on the reduced homology group of
$\triangle(\Pi_{n} - \{\hz,\ho\})$ is
$M^{B^{\#}(1,\ldots,1,1)} = M^{B(1,\ldots,1)}$,
which is the regular representation
of~$\SSSS_{n-1}$.

\begin{example}[The truncated $d$-divisible partition lattice]
{\rm
To generalize Example~\ref{example_boundary_simplex}
and specialize Example~\ref{example_k_r},
let $n = (k+r) \cdot d$
and consider the $k$-skeleton of the simplex generated by
the composition $(d,d, \ldots, d)$ of~$n$.
Here $\Pistar_{\Delta}$ 
consists of all set partitions in the $d$-divisible partition lattice
with at most $k+2$ parts.
Directly we have that
the order complex
$\triangle(\Pistar_{\Delta} - \{\ho\})$
is a wedge of $k$-dimensional spheres
and
its $k$-dimensional reduced homology is
given by equation~\eqref{equation_k_r}.
}
\end{example}

\begin{example}
{\rm
An integer partition
$\lambda = 
\{\lambda_{1}^{m_{1}}, \lambda_{2}^{m_{2}}, \ldots, \lambda_{p}^{m_{p}}\}$
of the non-negative integer~$n$ is called a {\em knapsack partition}
if all the sums
$\sum_{i=1}^{p} e_{i} \cdot \lambda_{i}$,
where $0 \leq e_{i} \leq m_{i}$, are distinct.
In other words, $\lambda$ is a knapsack partition if
$$
\left|
\left\{\sum_{i=1}^{p} j_{i}\cdot\lambda_{i}
 : 0 \leq j_{i} \leq m_{i}\right\}
\right|
=
\prod_{i=1}^{p} (m_{i}+1) .
$$
For a knapsack partition~$\lambda$ into $k-1$ parts
of $n-m$, where $m < n$,
define~$\Delta_{\lambda,m}$
to be the simplicial complex~$\Delta_{\lambda,m}$
which has the facets
$(c_{1}, c_{2}, \ldots, c_{k-1}, c_{k})$
with $\type(c_{1}, c_{2}, \ldots, c_{k-1}) = \lambda$
and the last part~$c_{k}$ is~$m$.
The complex~$\Delta_{\lambda,m}$
is homeomorphic to a $(k-2)$-dimensional ball;
see the proof of Theorem~4.4 in~\cite{Ehrenborg_Readdy_I}.
Applying Corollary~\ref{corollary_ball},
we obtain the following result:
$$ \rH_{k-2}(\triangle(\Pistar_{\Delta_{\lambda,m}}-\{\ho\}))
\cong_{\SSSS_{n-1}} 
\bigoplus_{\vec{c} \in \Int(\Delta_{\lambda,m})} S^{B^{*}(\vec{c}\,)} . $$
Furthermore,
the set of interior faces of $\Delta_{\lambda,m}$
is given by compositions
$\vec{c}$ in~$\Delta_{\lambda,m}$
such that when each part of $\vec{c}$ is written as a sum of parts of
$\lambda$, those parts are distinct.
This example is Theorem~10.3
in~\cite{Ehrenborg_Jung}. Moreover, $\Delta_{\lambda,m}$ 
is shellable, so Theorem~\ref{theorem_wedge_of_spheres}
yields a Morse matching
of $Q_{\Delta_{\lambda,m}}^*$;
see Theorem~8.2 of~\cite{Ehrenborg_Jung}.
}
\end{example}

\section{The Frobenius complex}
\label{section_Frobenius}

\newcommand{\TableOne}{
\begin{table}
\caption{The reduced homology groups of the
order complex $\triangle(\Pi_{n}^{\langle 3,5,7 \rangle} - \{\ho\})$
for the even cases $n = 8$, $10$, $12$ and $14$.
Instead of writing out the notation
$S^{B^{*}(\vec{d},\vec{b}\,)}$ for the Specht modules,
we have drawn the associated border shapes.
Observe that when a row has three boxes,
there is overlap with the row above.
}
\label{table_Pi_3_5_7_even}
\begin{tabular}{c|c|c} 
$n$ & $\rH_{0}$ & $\rH_{2}$ \\ 
\hline
8
&
$\Yboxdim{5pt}
\young(::\hfil\hfil\hfil\hfil,\hfil\hfil\hfil)
\oplus
\young(:::::\hfil\hfil,\hfil\hfil\hfil\hfil\hfil)
$
&
0 \\
\hline
10
&
$\Yboxdim{5pt}
\young(::\hfil\hfil\hfil\hfil\hfil\hfil,\hfil\hfil\hfil)
\oplus
\young(:::::\hfil\hfil\hfil\hfil,\hfil\hfil\hfil\hfil\hfil)
\oplus
\young(:::::::\hfil\hfil,\hfil\hfil\hfil\hfil\hfil\hfil\hfil)
$
&
0
\\
\hline
\vphantom{\rule{0 mm}{8 mm}}
12
\vphantom{\rule{0 mm}{8 mm}}
&
$\Yboxdim{5pt}
\young(:::::\hfil\hfil\hfil\hfil\hfil\hfil,\hfil\hfil\hfil\hfil\hfil)
\oplus
\young(:::::::\hfil\hfil\hfil\hfil,\hfil\hfil\hfil\hfil\hfil\hfil\hfil)
$
&
$\Yboxdim{5pt}
\young(::::::\hfil\hfil,::::\hfil\hfil\hfil,::\hfil\hfil\hfil,\hfil\hfil\hfil)
$
\\
\hline
\vphantom{\rule{0 mm}{8 mm}}
14
\vphantom{\rule{0 mm}{8 mm}}
&
$\Yboxdim{5pt}
\young(:::::::\hfil\hfil\hfil\hfil\hfil\hfil,\hfil\hfil\hfil\hfil\hfil\hfil\hfil)
$
&
$\Yboxdim{5pt}
\young(::::::\hfil\hfil\hfil\hfil,::::\hfil\hfil\hfil,::\hfil\hfil\hfil,\hfil\hfil\hfil)
\oplus
\young(:::::::::\hfil\hfil,::::\hfil\hfil\hfil\hfil\hfil,::\hfil\hfil\hfil,\hfil\hfil\hfil)
$
\\
&
&
$\Yboxdim{5pt}
\oplus\,
\young(:::::::::\hfil\hfil,:::::::\hfil\hfil\hfil,::\hfil\hfil\hfil\hfil\hfil,\hfil\hfil\hfil)
\oplus
\young(:::::::::\hfil\hfil,:::::::\hfil\hfil\hfil,:::::\hfil\hfil\hfil,\hfil\hfil\hfil\hfil\hfil)
$
\end{tabular}
\end{table}
}

\newcommand{\TableTwo}{
\begin{table}
\caption{The reduced homology groups of the
order complex $\triangle(\Pi_{n}^{\langle 3,5,7 \rangle} - \{\ho\})$
for the odd cases $n = 9$, $11$, $13$ and $15$.}
\label{table_Pi_3_5_7_odd}
\begin{tabular}{c|c|c} 
$n$ & $\rH_{1}$ & $\rH_{3}$ \\ 
\hline
9
&
$\Yboxdim{5pt}
\young(::::\hfil\hfil,::\hfil\hfil\hfil,\hfil\hfil\hfil)
$
&
0 \\ 
\hline
11
&
$\Yboxdim{5pt}
\young(::::\hfil\hfil\hfil\hfil,::\hfil\hfil\hfil,\hfil\hfil\hfil)
\oplus
\young(:::::::\hfil\hfil,::\hfil\hfil\hfil\hfil\hfil,\hfil\hfil\hfil)
\oplus
\young(:::::::\hfil\hfil,:::::\hfil\hfil\hfil,\hfil\hfil\hfil\hfil\hfil)
$
&
0 \\
\hline 
13
&
$\Yboxdim{5pt}
\young(:::::::\hfil\hfil\hfil\hfil,::\hfil\hfil\hfil\hfil\hfil,\hfil\hfil\hfil)
\oplus
\young(:::::::\hfil\hfil\hfil\hfil,:::::\hfil\hfil\hfil,\hfil\hfil\hfil\hfil\hfil)
\oplus
\young(::::::::::\hfil\hfil,:::::\hfil\hfil\hfil\hfil\hfil,\hfil\hfil\hfil\hfil\hfil)
$
&
0 \\ 
&
$\Yboxdim{5pt}
\oplus\,
\young(::::\hfil\hfil\hfil\hfil\hfil\hfil,::\hfil\hfil\hfil,\hfil\hfil\hfil)
\oplus
\young(:::::::::\hfil\hfil,::\hfil\hfil\hfil\hfil\hfil\hfil\hfil,\hfil\hfil\hfil)
\oplus
\young(:::::::::\hfil\hfil,:::::::\hfil\hfil\hfil,\hfil\hfil\hfil\hfil\hfil\hfil\hfil)
$
&
0 \\ 
\hline
&
$\Yboxdim{5pt}
\young(:::::::\hfil\hfil\hfil\hfil\hfil\hfil,::\hfil\hfil\hfil\hfil\hfil,\hfil\hfil\hfil)
\oplus
\young(:::::::::\hfil\hfil\hfil\hfil,::\hfil\hfil\hfil\hfil\hfil\hfil\hfil,\hfil\hfil\hfil)
\oplus
\young(:::::::\hfil\hfil\hfil\hfil\hfil\hfil,:::::\hfil\hfil\hfil,\hfil\hfil\hfil\hfil\hfil)
$
&
\\
15
&
$\Yboxdim{5pt}
\oplus\,
\young(::::::::::::\hfil\hfil,:::::\hfil\hfil\hfil\hfil\hfil\hfil\hfil,\hfil\hfil\hfil\hfil\hfil)
\oplus
\young(:::::::::\hfil\hfil\hfil\hfil,:::::::\hfil\hfil\hfil,\hfil\hfil\hfil\hfil\hfil\hfil\hfil)
\oplus
\young(::::::::::::\hfil\hfil,:::::::\hfil\hfil\hfil\hfil\hfil,\hfil\hfil\hfil\hfil\hfil\hfil\hfil)
$
&
$\Yboxdim{5pt}
\young(::::::::\hfil\hfil,::::::\hfil\hfil\hfil,::::\hfil\hfil\hfil,::\hfil\hfil\hfil,\hfil\hfil\hfil)
$
\\
&
$\Yboxdim{5pt}
\oplus\,
\young(::::::::::\hfil\hfil\hfil\hfil,:::::\hfil\hfil\hfil\hfil\hfil,\hfil\hfil\hfil\hfil\hfil)
$
&
\end{tabular}
\end{table}
}

We now consider a different class of examples stemming
from~\cite{Clark_Ehrenborg}.
Let~$\Lambda$ be a semigroup of positive integers,
that is, a subset of the positive integers which is closed
under addition.
Let~$\Delta_{n}$ be the collection of all compositions of $n$
whose parts belong to $\Lambda$,
that is,
$$
  \Delta_{n}
    =
  \{ (c_{1}, \ldots, c_{k}) \in \Comp(n) \:\: : \:\: 
          c_{1}, \ldots, c_{k} \in \Lambda   \}  .
$$
Since $\Lambda$ is closed under addition,
we obtain that $\Delta_{n}$ is a filter in the poset of
compositions~$\Comp(n)$ and hence we view it as
a simplicial complex.
This complex is known as the Frobenius complex;
see~\cite{Clark_Ehrenborg}.
Moreover, since $\Lambda$ is a semigroup,
the collection of integer partitions of $n$
with parts in~$\Lambda$ is a filter, therefore,
using
Lemma~\ref{lemma_integer_filter}
the associated filter in
the partition lattice is given by
$$   \Pi_{n}^{\Lambda}
     =
        \{ \{B_{1}, \ldots, B_{k}\} \in \Pi_{n}
     \:\: : \:\:   |B_{1}|, \ldots, |B_{k}|  \in \Lambda\}  .   $$

Let $\Psi_{n}$ be the generating function
$$ \Psi_{n}
   =
    \sum_{i \geq -1} \rbeta_{i}(\Delta_{n}) \cdot t^{i+1} . $$
Observe that for a composition $\vec{c}$ in $\Delta_{n}$
we have that
the link $\link_{\vec{c}}(\Delta_{n})$ is given by
the join
\begin{equation}
\link_{\vec{c}}(\Delta_{n})
=
\Delta_{c_{1}} * \Delta_{c_{2}} * \cdots * \Delta_{c_{k}} .
\label{equation_Frobenius_link}
\end{equation}
We can apply the K\"unneth theorem to obtain
that the $i$th reduced Betti number
of the link is given by
$$
    \rbeta_{i}(\link_{\vec{c}\,}(\Delta_{n}))
  =
    [t^{i+1}] \Psi_{c_{1}} \cdot \Psi_{c_{2}} \cdots \Psi_{c_{k}}  .
$$
Using Theorem~\ref{theorem_main_theorem_order_complex},
the $i$th reduced Betti number of
the order complex $\triangle(\Pi_{n}^{\Lambda} - \{\ho\})$
is given in the representation ring of $\SSSS_{n-1}$ by
\begin{align*}
\rH_{i}(\triangle(\Pi_{n}^{\Lambda} - \{\ho\}))
\cong_{\SSSS_{n-1}}
\sum_{\vec{c}} [t^{i_{\vec{c}\,}+1}]
\Psi_{c_{1}} \cdot
\Psi_{c_{2}} \cdots
\Psi_{c_{k}}
\cdot
S^{B^{*}(\vec{c}\,)}  ,
\end{align*}
where the sum is over all compositions
$\vec{c} = (c_{1}, c_{2}, \ldots, c_{k})$ of $n$.

A more explicit approach is possible when the
complex $\Delta_{n}$ has a discrete Morse matching.
By combining equation~\eqref{equation_Frobenius_link},
Lemma~\ref{lemma_join_critical_cells},
and a Morse matching from~\cite{Clark_Ehrenborg}, 
we create a Morse matching on every link.
We will see this method in the remainder of this section.

We continue by studying one concrete example.
Let $a$ and $d$ be two positive integers.
Let~$\Lambda$ be the semigroup
generated by the arithmetic progression
\begin{align*}
\Lambda
& = 
\langle a, a+d, a+2d, \ldots \rangle . \\
\intertext{Since for $j \geq a$ we have that
$a + j \cdot d = d \cdot a + a + (j-a) \cdot d$,
the semigroup is generated by
the finite arithmetic progression
} 
\Lambda
& = 
\langle a, a+d, a+2d, \ldots, a+(a-1)d\, \rangle .
\end{align*}
Clark and Ehrenborg proved that the Frobenius complex $\Delta_{n}$
is a wedge of spheres of different dimensions;
see~\cite[Theorem~5.1]{Clark_Ehrenborg}.
Observe that their result
is formulated in terms of sets,
instead of compositions.
However,
the two notions are equivalent via the
natural bijection given by 
sending 
a composition $(c_{1}, c_{2}, \ldots, c_{k})$ of~$n$
to the subset
$\{c_{1},c_{1}+c_{2}, \ldots, c_{1} \plusdots c_{k-1}\}$
of the set~$[n-1]$.
To state their result, let $A$ be the set
$\{a+d, a+2d, \ldots, a+(a-1) \cdot d\}$.

\begin{proposition}
For $n$ in the semigroup $\Lambda$,
there is a discrete Morse matching on the
Frobenius complex $\Delta_{n}$ such that
the critical cells are compositions $\vec{c} = (c_{1}, \ldots, c_{k})$
characterized by
\begin{itemize}
\item[(i)]
All but the last entry of the composition
belongs to the set $A$,
that is,
$c_{1}, \ldots, c_{k-1} \in A$.
\item[(ii)]
The last entry $c_{k}$ belongs to 
$\{a\} \cup A$.
\end{itemize}
Furthermore, all the critical cells are facets.
\label{proposition_Clark_Ehrenborg}
\end{proposition}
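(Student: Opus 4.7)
The plan is to construct an explicit discrete Morse matching on $\Delta_n$ by a leftmost-reduction scheme, in the spirit of the matching on the Frobenius complex in~\cite{Clark_Ehrenborg} but formulated directly in terms of compositions. Given $\vec{c}=(c_1,\ldots,c_k)\in\Delta_n$, scan from left to right and locate the smallest index $i$ such that either \emph{(a)}~$i<k$ and $c_i=a$, or \emph{(b)}~$c_i\in\Lambda\setminus(\{a\}\cup A)$. If no such $i$ exists, declare $\vec{c}$ critical; otherwise, pair $\vec{c}$ in case~(a) with the merged composition $(c_1,\ldots,c_{i-1},a+c_{i+1},c_{i+2},\ldots,c_k)$, and in case~(b) with the split composition $(c_1,\ldots,c_{i-1},a,c_i-a,c_{i+1},\ldots,c_k)$.

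The first step is to verify this defines a valid matching. Using the parametrization $\Lambda=\{Ma+Rd : M\geq 1,\ 0\leq R\leq(a-1)M\}$ together with $\gcd(a,d)=1$, I would establish two algebraic facts: (i)~the elements of $\{a\}\cup A$ are precisely those of $\Lambda$ whose representations all force $M=1$, hence they are the atoms of $\Lambda$; and (ii)~every $c\in\Lambda\setminus(\{a\}\cup A)$ admits a representation with $M\geq 2$, hence $c-a\in\Lambda$ (after possibly shifting the representation by $(M,R)\mapsto(M+d,R-a)$ to enforce $R\leq(a-1)(M-1)$). Together these ensure that case-(b) splits stay in $\Delta_n$ and that the parts of a critical cell cannot be refined. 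Involutivity is then a direct check: after a case-(a) merge, the new part $a+c_{i+1}$ has $M\geq 2$ and so lies outside $\{a\}\cup A$, triggering case~(b) at the same position~$i$ of the merged composition and returning $\vec{c}$; the other direction is symmetric.

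For acyclicity, the plan is to attach to each composition the invariant consisting of the leftmost reducible index $i(\vec{c})$ together with the prefix $(c_1,\ldots,c_{i(\vec{c})-1})$, ordered lexicographically with $i$ compared first. Matched pairs share this invariant by construction, so the key step is a case analysis showing that along every unmatched cover in the modified Hasse diagram the invariant strictly decreases---leveraging again that $c_1,\ldots,c_{i-1}$ are atoms to rule out certain splits at positions before $i$. Once this monotonicity is established, no alternating cycle can close up.

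Finally, by construction the critical cells are precisely the compositions with all parts in $\{a\}\cup A$ and with no $a$ among $c_1,\ldots,c_{k-1}$, matching conditions~(i) and~(ii); and by fact~(i) no part of such a composition can be split in $\Delta_n$, so each critical cell is a facet. The main technical obstacle I anticipate is the acyclicity case analysis, particularly in refining the secondary invariant to handle unmatched moves that preserve the leftmost reducible index; the algebraic facts about atoms and about $c-a\in\Lambda$ for non-atoms fall out cleanly once the parametrization of $\Lambda$ is in hand.
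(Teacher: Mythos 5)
Your proposal takes a genuinely different route from the paper. The paper's proof is essentially a citation: in the coprime case it points to Lemma~5.10 of Clark--Ehrenborg, and the case $\gcd(a,d)=g>1$ is reduced to the coprime case via the poset isomorphism $\Delta'_{n/g}\cong\Delta_n$, $\vec{c}\mapsto g\vec{c}$. You instead rebuild the matching from scratch in the language of compositions. The construction is sound under your standing hypothesis $\gcd(a,d)=1$: the parametrization $\Lambda=\{Ma+Rd\ :\ M\geq 1,\ 0\leq R\leq (a-1)M\}$ does identify $\{a\}\cup A$ as the set of atoms and shows that every non-atom $c$ has $c-a\in\Lambda$ (the shift $(M-1,R)\mapsto(M-1+d,R-a)$ works because $R>(a-1)(M-1)\geq a-1$ forces $R\geq a$), so both the case-(a) merge and the case-(b) split stay inside $\Delta_n$ and invert each other. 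One can also simplify the acyclicity argument you were worried about: any directed cycle in the modified Hasse diagram strictly alternates matched up-edges with unmatched down-edges; an up-edge goes from a case-(b) composition to a case-(a) composition, and a case-(a) composition is the \emph{higher} element of its matched pair and so has no outgoing up-edge. Thus every second vertex of a cycle must be in case~(b). An unmatched down-edge from a case-(a) vertex $\vec{x}_1$ (with bad index $i$) merges at some $j\neq i$: if $j>i$ the result is again in case~(a) at index~$i$, hence a dead end; if $j<i$ the result is in case~(b) with leftmost bad index at most $j<i$. So the leftmost bad index alone strictly decreases every two steps and no cycle can close --- the prefix, and the secondary refinement you anticipate needing, are unnecessary. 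The one genuine gap is that you never leave the coprime setting. Your algebraic facts (i) and (ii) use $\gcd(a,d)=1$ in an essential way; when $g>1$ the set $\{a\}\cup A$ is \emph{not} the atom set of $\Lambda$ (e.g.\ $a=4$, $d=2$ gives $A=\{6,8,10\}$ but $8=4+4$ and $10=4+6$ are reducible), so your matching rule as written is not even well-defined there. The paper closes this off with the scaling isomorphism; you should either invoke the same reduction or reformulate your facts in terms of $\gcd(a,d)\cdot A'$ rather than $A$.
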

\begin{proof}
When $a$ and $d$ are relative prime,
that is, $\gcd(a,d) = 1$, this result is
Lemma~5.10
in~\cite{Clark_Ehrenborg}.
When $a$ and $d$ are not relative prime,
the result follows by scaling down
the three parameters~$a$, $d$ and~$n$ by
$a^{\prime} = a/\gcd(a,d)$,
$d^{\prime} = d/\gcd(a,d)$
and
$n^{\prime} = n/\gcd(a,d)$.
Now the result applies the semigroup
$\Lambda^{\prime}
= 
\langle a^{\prime}, a^{\prime}+d^{\prime}, a^{\prime}+2d^{\prime}, 
  \ldots\rangle$
and its associated Frobenius complex~$\Delta^{\prime}_{n^{\prime}}$.
However, this complex is isomorphic to $\Delta_{n}$ by
sending the composition
$\vec{c} = (c_{1}, \ldots, c_{k})$
in~$\Delta^{\prime}_{n^{\prime}}$
to the composition
$\gcd(a,d) \cdot \vec{c} = 
(\gcd(a,d) \cdot c_{1}, \ldots, \gcd(a,d) \cdot c_{k})$
in~$\Delta_{n}$.
\end{proof}

\begin{corollary}
The order complex
$\triangle(\Pi_{n}^{\Lambda} - \{\ho\})$ is
a wedge of spheres.
\end{corollary}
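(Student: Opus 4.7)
The plan is to apply Theorem~\ref{theorem_wedge_of_spheres} to the Frobenius complex $\Delta_n \subseteq \Comp(n)$, noting first that Lemma~\ref{lemma_integer_filter} identifies the associated filter $\Pistar_{\Delta_n}$ with $\Pi_n^{\Lambda}$. Hence it suffices to produce, for every composition $\vec{c}$ in $\Delta_n$, a discrete Morse matching on $\link_{\vec{c}\,}(\Delta_n)$ whose critical cells are all facets of the link.

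For such a composition $\vec{c} = (c_1, c_2, \ldots, c_k)$, equation~\eqref{equation_Frobenius_link} gives
\[
\link_{\vec{c}\,}(\Delta_n) = \Delta_{c_1} * \Delta_{c_2} * \cdots * \Delta_{c_k}.
\]
Since each $c_i$ lies in $\Lambda$, Proposition~\ref{proposition_Clark_Ehrenborg} equips each factor $\Delta_{c_i}$ with a discrete Morse matching whose critical cells are facets. Iterating Lemma~\ref{lemma_critical_cell_join} then produces a Morse matching on the join whose critical cells are precisely the concatenations $\vec{d}_1 \circ \cdots \circ \vec{d}_k$ with each $\vec{d}_i$ critical in $\Delta_{c_i}$.

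What remains is to check that each such concatenation is itself a facet of the join. This is essentially immediate: any face of $\Delta_{c_1} * \cdots * \Delta_{c_k}$ that refines $\vec{d}_1 \circ \cdots \circ \vec{d}_k$ splits, block by block, as $\vec{e}_1 \circ \cdots \circ \vec{e}_k$ with $\vec{e}_i$ a refinement of $\vec{d}_i$ inside $\Delta_{c_i}$; because each $\vec{d}_i$ is already a facet of $\Delta_{c_i}$, we must have $\vec{e}_i = \vec{d}_i$ for every $i$, so no proper refinement exists. Therefore the hypothesis of Theorem~\ref{theorem_wedge_of_spheres} holds, and we conclude that $\triangle(\Pi_n^{\Lambda} - \{\ho\}) \simeq Q^{*}_{\Delta_n}$ is a wedge of spheres. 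No step here is a genuine obstacle; the facet verification for joins is the only non-formal ingredient, and it follows directly from the concatenation description of faces of a join.
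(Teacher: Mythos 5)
Your proof is correct, but it takes a route that differs from the paper's. You apply Theorem~\ref{theorem_wedge_of_spheres}: you build an explicit discrete Morse matching on each link $\link_{\vec{c}\,}(\Delta_n)$ by combining Proposition~\ref{proposition_Clark_Ehrenborg} (a matching on each $\Delta_{c_i}$ with critical cells facets) with Lemma~\ref{lemma_critical_cell_join} on joins, and you verify that concatenations of facets remain facets in the join — which is the only real check, and you handle it correctly. The paper instead invokes Corollary~\ref{corollary_every_link_wedge_spheres} (the consequence of the Bj\"orner--Wachs--Welker homotopy fiber theorem): it observes from Proposition~\ref{proposition_Clark_Ehrenborg} that $\Delta_n$ is a wedge of spheres, uses equation~\eqref{equation_Frobenius_link} to conclude every link of $\Delta_n$ is a join of wedges of spheres and hence a wedge of spheres, and then applies the corollary directly. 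Your discrete Morse route is slightly longer to write down but has the virtue of producing an explicit matching on $Q^*_{\Delta_n}$, and it avoids having to appeal to the connectedness hypothesis that Corollary~\ref{corollary_every_link_wedge_spheres} carries; the paper's route is shorter and more homotopy-theoretic in flavor. Both were foreshadowed in the paper's remarks at the start of Section~\ref{section_Frobenius}, so the choice is mainly a matter of which machinery you prefer to deploy.
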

\begin{proof}
Since $\Delta_{n}$ has a discrete Morse matching
where each critical cell is a facet, $\Delta_{n}$ is
homotopy equivalent to a wedge of spheres.
Furthermore, by equation~\eqref{equation_Frobenius_link}
we know that every link of $\Delta_{n}$ is a wedge of
spheres. Finally, by
Corollary~\ref{corollary_every_link_wedge_spheres}
we obtain the result.
\end{proof}

Next we need to extend
Lemma~\ref{lemma_permutation_module_isomorphism}
to collect Specht modules together.
We call the sum $c_{1} + c_{2} \plusdots c_{j}$
an {\em initial sum} of a composition $\vec{c} = (c_{1}, c_{2}, \ldots, c_{k})$
for $1 \leq j \leq k$.
\begin{definition}
\label{definition_d_b}
For an interval
$[\vec{d}, \vec{b}\,]$ in the lattice of compositions $\Comp(n)$
let 
$B^{*}(\vec{d},\vec{b}\,)$
be the skew-shape
where the row lengths are given by
$d_{1}, d_{2}, \ldots, d_{r-1}, d_{r}-1$
and
if the initial sum $d_{1} + \cdots + d_{j}$
is equal to an initial sum of the composition $\vec{b}-1$,
then $j$th row and the $(j+1)$st row overlap in one column. All other rows 
of $B^{*}(\vec{d},\vec{b}\,)$ are non--overlapping.
\end{definition}
As an example, if $\vec{d}=(2,5,4,1,3,2)$ and $\vec{b}=(2+5+4,1+3,2)$, 
then $B^*(\vec{d},\vec{b}\,)$ is the border strip with row lengths
$2,5,4,1,3$ and $2-1=1$ which overlaps between the rows of length $4$ and $1$ 
and the rows of length $3$ and~$1$.  
Note that
$B^{*}(\vec{d},(n)) = A^{*}(\vec{d}\,)$.

The proof of the next lemma 
is the same as the proof of
Lemma~\ref{lemma_permutation_module_isomorphism},
that is, it uses 
jeu-de-taquin
moves where two adjacent rows do not overlap.
\begin{lemma}
\label{lemma_interval_in_composition_poset}
Let $\vec{b}$ and $\vec{d}$ be two compositions
in $\Comp(n)$ such that $\vec{d} \leq \vec{b}$.
Then Specht module~$S^{B^{*}(\vec{d},\vec{b})}$
is given by the direct sum
$$  
S^{B^{*}(\vec{d},\vec{b})}
\cong_{\SSSS_{n-1}}
\bigoplus_{\vec{d} \leq \vec{c} \leq \vec{b}}   S^{B^{*}(\vec{c}\,)} .
$$
\end{lemma}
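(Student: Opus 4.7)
The plan is to adapt the jeu-de-taquin argument used in the proof of Lemma~\ref{lemma_permutation_module_isomorphism}, taking as the starting shape the partially overlapping border shape $B^{*}(\vec{d},\vec{b}\,)$ rather than the fully disjoint shape $A^{\#}(\vec{c}\,)$. For convenient bookkeeping, let $S(\vec{c}\,)=\{c_{1},c_{1}+c_{2},\ldots,c_{1}+\cdots+c_{k-1}\}$ denote the set of internal initial sums of a composition $\vec{c}=(c_{1},\ldots,c_{k})$ in $\Comp(n)$. Since refinement in $\Comp(n)$ corresponds to containment of initial-sum sets, $\vec{d}\leq\vec{c}\leq\vec{b}$ is equivalent to $S(\vec{b}\,)\subseteq S(\vec{c}\,)\subseteq S(\vec{d}\,)$, so the interval $[\vec{d},\vec{b}\,]$ is in natural bijection with the subsets of $S(\vec{d}\,)\setminus S(\vec{b}\,)$. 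Moreover, by Definition~\ref{definition_d_b}, the non-overlapping adjacent pairs of rows in $B^{*}(\vec{d},\vec{b}\,)$ correspond exactly to the elements of $S(\vec{d}\,)\setminus S(\vec{b}\,)$.

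First, I would dispose of the two endpoints of the interval. When $\vec{b}=\vec{d}$ every adjacent pair of rows of $B^{*}(\vec{d},\vec{d}\,)=B^{*}(\vec{d}\,)$ overlaps and the interval collapses to the singleton $\{\vec{d}\,\}$, so both sides of the claim equal $S^{B^{*}(\vec{d}\,)}$. When $\vec{b}=(n)$, no internal pair of rows overlaps, so $B^{*}(\vec{d},(n))$ is the fully disjoint shape $A^{*}(\vec{d}\,)$; in this case the polytabloids coincide with tabloids, giving $S^{B^{*}(\vec{d},(n))}\cong_{\SSSS_{n-1}}M^{B^{\#}(\vec{d}\,)}$, and the claim reduces exactly to Lemma~\ref{lemma_permutation_module_isomorphism}.

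Next I would induct on $|S(\vec{d}\,)\setminus S(\vec{b}\,)|$, that is, on the number of non-overlapping gaps in $B^{*}(\vec{d},\vec{b}\,)$. For the inductive step, pick such a gap between rows $j$ and $j+1$, corresponding to an element $p\in S(\vec{d}\,)\setminus S(\vec{b}\,)$, place an empty box to the left of row $j+1$ and above the rightmost cell of row $j$, and perform a jeu-de-taquin slide into that box exactly as in the proof of Lemma~\ref{lemma_permutation_module_isomorphism}. The slide has two alternatives: a horizontal slide pushes the upper row one step to the left, creating a single-column overlap and producing the shape $B^{*}(\vec{d},\vec{b}\,')$ with $S(\vec{b}\,')=S(\vec{b}\,)\cup\{p\}$, while a vertical slide pulls row $j+1$ up one step, keeping the two rows disjoint and reducing the problem to a shape with one fewer gap but the same ambient interval $[\vec{d},\vec{b}\,]$ restricted to $p\notin S(\vec{c}\,)$. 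Decomposing $S^{B^{*}(\vec{d},\vec{b})}$ along the two alternatives and applying the inductive hypothesis to each piece assembles the direct sum indexed by subsets of $S(\vec{d}\,)\setminus S(\vec{b}\,)$, which by the bijection above is exactly the interval $[\vec{d},\vec{b}\,]$.

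The main obstacle is justifying that the two jeu-de-taquin alternatives at each gap really do split $S^{B^{*}(\vec{d},\vec{b})}$ as a direct sum of $\SSSS_{n-1}$-modules, rather than only producing isomorphisms between the resulting shapes. The cleanest route is to construct, at each non-overlapping gap, an explicit $\SSSS_{n-1}$-equivariant projection on the polytabloid space onto the horizontal summand built from the antisymmetrization idempotent associated to the pair of cells adjacent across that gap; this mirrors the column-stabilizer antisymmetrization already present in the definition of Specht modules. This bookkeeping is precisely the delicate step implicit in the proof of Lemma~\ref{lemma_permutation_module_isomorphism} and should carry over verbatim, completing the induction.
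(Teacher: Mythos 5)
Your proposal follows the paper's own strategy: the paper's proof of this lemma is a single sentence pointing back to the jeu-de-taquin argument for Lemma~\ref{lemma_permutation_module_isomorphism}, and you flesh out the induction on the number of non-overlapping gaps and the bijection between the interval $[\vec{d},\vec{b}\,]$ and the subsets of $S(\vec{d}\,)\setminus S(\vec{b}\,)$. Two small corrections. The vertical slide does not ``keep the two rows disjoint''; it moves the lower row $j$ up so that it merges with row $j+1$ into one longer row, which is exactly what replaces $\vec{d}$ by the coarsening with $p$ removed from its initial-sum set and makes your bookkeeping for that branch come out right. And the suggested ``antisymmetrization idempotent associated to the pair of cells adjacent across the gap'' is not the right tool: those two cells lie in distinct columns, so no column-stabilizer transposition relates them. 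The rigorous reason the splitting works is that the two pieces of $B^{*}(\vec{d},\vec{b}\,)$ flanking a gap meet only at a corner and share no column, so the skew Specht module of the union is the induced product of the Specht modules of the two pieces; expanding that induced product recursively yields precisely the paper's jeu-de-taquin dichotomy (create a one-column overlap, or merge the two rows), which is the ribbon Schur function identity in disguise.
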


In order to state the main result for the semigroup
$\Lambda = \langle a, a + d, a + 2 d, \ldots \rangle$
and the associated filter in the partition lattice,
we need one last definition.
\begin{definition}
For a composition~$\vec{d}$ of $n$ with entries in the set
$\{a\} \cup A$
let $\vec{b}(\vec{d}\,)$ be the composition
greater than or equal to~$\vec{d}$ 
obtained by adding runs of entries of $\vec{d}$
together where each run ends with the entry $a$.
\end{definition}
As an example, for
$a = 3$, $d = 2$ we have $A = \{5,7\}$.
Hence for the composition
$\vec{d} = (5,3,    7,5,3,  3,   7,5)$
we obtain
$\vec{b}(\vec{d}\,)
=    (5+3,    7+5+3,  3,   7+5) 
=    (8,        15,        3,   12)$. 

\begin{remark}
{\rm
Observe that the skew-shape
$S^{B^{*}(\vec{d}, \vec{b}(\vec{d}\,))}$
has the row lengths $d_{1}, \ldots, d_{r-1}, d_{r}-1$
and satisfies the condition that
$d_{i} = a$ if and only if there is overlap between
$i$th and $(i+1)$st rows. See Definition~\ref{definition_d_b}.
}
\label{remark_a_is_special}
\end{remark}

\TableOne

\begin{theorem}
Let $a$ and $d$ be two positive integers
and let $\Pi_{n}^{\Lambda}$ be the
filter in the partition lattice~$\Pi_{n}$
where each partition $\pi$ consists of blocks
whose cardinalities belong to the semigroup~$\Lambda$
generated by the arithmetic progression
$a, a + d, \ldots, a + (a-1) \cdot d$.
Then the $i$th reduced homology group of
the order complex $\triangle(\Pi_{n}^{\Lambda} - \{\ho\})$
is given by the direct sum
\begin{align*}
   \rH_{i}(\triangle(\Pi_{n}^{\Lambda} - \{\ho\}))
& \cong_{\SSSS_{n-1}}
       \bigoplus_{\vec{d}}
                                 S^{B^{*}(\vec{d}, \vec{b}(\vec{d}\,))} ,
\end{align*}
where the sum is over all compositions~$\vec{d}$ into $i+2$
parts such that every entry belongs to the set
$\{a\} \cup A = \{a, a+d, a + 2 \cdot d, a + (a-1) \cdot d\}$.
\label{theorem_a_d}
\end{theorem}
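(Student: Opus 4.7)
The plan is to apply Theorem~\ref{theorem_main_theorem_order_complex} to the Frobenius complex $\Delta_{n}$ of compositions of $n$ with parts in $\Lambda$, noting that $\Pistar_{\Delta_{n}} = \Pi_{n}^{\Lambda}$ by Lemma~\ref{lemma_integer_filter}. This reduces the question to computing, for each composition $\vec{c} \in \Delta_{n}$, the reduced Betti numbers of the link $\link_{\vec{c}}(\Delta_{n})$. Using equation~\eqref{equation_Frobenius_link}, the link decomposes as the join $\Delta_{c_{1}} * \cdots * \Delta_{c_{k}}$; combining Proposition~\ref{proposition_Clark_Ehrenborg} with Lemma~\ref{lemma_critical_cell_join} then produces a discrete Morse matching on the link whose critical cells are precisely the concatenations $\vec{e}^{(1)} \circ \cdots \circ \vec{e}^{(k)}$, where each $\vec{e}^{(j)}$ is a critical cell of $\Delta_{c_{j}}$. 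Since every such critical cell is a facet, each link is a wedge of spheres, and $\rbeta_{i-|\vec{c}\,|+1}(\link_{\vec{c}\,}(\Delta_{n}))$ equals the number of such concatenations of total length $i+2$.

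Substituting back, the multiplicity of $S^{B^{*}(\vec{c}\,)}$ in $\rH_{i}(\triangle(\Pi_{n}^{\Lambda} - \{\ho\}))$ equals the number of compositions $\vec{d}$ of $n$ into $i+2$ parts that refine $\vec{c}$ and satisfy the following: within each sub-block of $\vec{d}$ determined by a part of $\vec{c}$, all parts lie in $A$ except possibly the last, which lies in $\{a\} \cup A$. On the other hand, I will unpack the right-hand side of the theorem via Lemma~\ref{lemma_interval_in_composition_poset} as
\[
\bigoplus_{\vec{d}} S^{B^{*}(\vec{d}, \vec{b}(\vec{d}\,))}
\;\cong_{\SSSS_{n-1}}\;
\bigoplus_{\vec{d}} \bigoplus_{\vec{d}\, \leq\, \vec{c}\, \leq\, \vec{b}(\vec{d}\,)} S^{B^{*}(\vec{c}\,)},
\]
where $\vec{d}$ ranges over compositions of $n$ into $i+2$ parts with entries in $\{a\} \cup A$. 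Swapping the order of summation, the multiplicity of $S^{B^{*}(\vec{c}\,)}$ extracted from the right-hand side becomes the number of such $\vec{d}$ satisfying $\vec{d} \leq \vec{c} \leq \vec{b}(\vec{d}\,)$.

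The heart of the proof is matching these two counts combinatorially. The condition $\vec{d} \leq \vec{c}$ simply says that $\vec{d}$ refines $\vec{c}$, while $\vec{c} \leq \vec{b}(\vec{d}\,)$ asserts that every internal block boundary of $\vec{c}$ is a block boundary of $\vec{b}(\vec{d}\,)$; by the definition of $\vec{b}$ these occur precisely at positions immediately following a part equal to $a$ in $\vec{d}$. Contrapositively, any part of $\vec{d}$ equal to $a$ lying strictly interior to a sub-block of $\vec{c}$ would create a boundary of $\vec{b}(\vec{d}\,)$ not present in $\vec{c}$; this forces all non-final parts of each sub-block to lie in $A$, while the last part of each sub-block remains free to be $a$ or in $A$ (since its boundary is either a block boundary of $\vec{c}$ or the very end). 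This is exactly the critical-cell characterization from the first paragraph, so the multiplicities agree. I expect the careful book-keeping of this translation, in particular treating the final sub-block of $\vec{c}$ on the same footing as the preceding ones, to be the main technical obstacle; once it is in place the theorem follows by equating the two expressions $S^{B^{*}(\vec{c}\,)}$-component by component.
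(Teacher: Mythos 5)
Your proof follows essentially the same route as the paper: reduce via Theorem~\ref{theorem_main_theorem_order_complex}, compute the links of $\Delta_{n}$ using equation~\eqref{equation_Frobenius_link} together with Proposition~\ref{proposition_Clark_Ehrenborg} and Lemma~\ref{lemma_critical_cell_join}, swap the order of summation, and collapse the inner sum via Lemma~\ref{lemma_interval_in_composition_poset}. One small slip in wording: since $\vec{c} \leq \vec{b}(\vec{d}\,)$ means $\vec{c}$ refines $\vec{b}(\vec{d}\,)$, the condition is that every boundary of $\vec{b}(\vec{d}\,)$ is a boundary of $\vec{c}$, not the other way around — but your subsequent contrapositive argument uses the correct direction, so the substance is fine.
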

\begin{proof}
Let $\vec{c}$ be a composition in the complex $\Delta_{n}$.
Using the Morse matching
given by 
Proposition~\ref{proposition_Clark_Ehrenborg}
and Lemma~\ref{lemma_critical_cell_join},
we obtain that a critical cell $\vec{d}$
in the link
$\link_{\vec{c}}(\Delta_{n})
=
\Delta_{c_{1}} * \Delta_{c_{2}} * \cdots * \Delta_{c_{k}}$
is a composition
$\vec{d} \leq \vec{c}$
where the entries of $\vec{d}$ belong to the set $\{a\} \cup A$.
Furthermore,
in the run of entries of $\vec{d}$ that sums to the entry~$c_{i}$
of the composition~$\vec{c}$, only the last entry of the run
is allowed to be equal to $a$.
Using Theorem~\ref{theorem_main_theorem_order_complex}
we have
\begin{align*}
   \rH_{i}(\triangle(\Pi_{n}^{\Lambda} - \{\ho\}))
 & \cong_{\SSSS_{n-1}}
       \bigoplus_{\vec{c} \in \Delta_{n}}
           \rH_{i-|\vec{c}\,|+1}(\link_{\vec{c}\,}(\Delta_{n}))
                \otimes
                                 S^{B^{*}(\vec{c}\,)}.  \\
 & \cong_{\SSSS_{n-1}}
       \bigoplus_{\vec{c} \in \Delta_{n}}
       \bigoplus_{\vec{d}}
                                 S^{B^{*}(\vec{c}\,)}, 
\end{align*}
where the inner sum consists of critical compositions~$\vec{d}$
satisfying the conditions discussed in the previous paragraph
and with $|\vec{d}\,| = i+2$.
By changing the order of summation
we obtain
\begin{align*}
   \rH_{i}(\triangle(\Pi_{n}^{\Lambda} - \{\ho\}))
 & \cong_{\SSSS_{n-1}}
       \bigoplus_{\vec{d}}
       \bigoplus_{\vec{c}} S^{B^{*}(\vec{c}\,)} ,
\end{align*}
where
the outer direct sum is over all compositions~$\vec{d}$
of $n$ into $i+2$ parts where each part is in the set $\{a\} \cup A$
and
the inner direct sum is over all compositions~$\vec{c}$
greater than $\vec{d}$ obtained by 
adding runs of entries of $\vec{d}$ where an entry equal to $a$
can only be at the end of a run.
The inner direct sum is hence given by
the Specht module~$S^{B^{*}(\vec{d},\vec{b}(\vec{d}))}$
by
Remark~\ref{remark_a_is_special}
and
Lemma~\ref{lemma_interval_in_composition_poset},
and therefore the result follows.
\end{proof}

\TableTwo

\begin{corollary}
The order complex $\triangle(\Pi_{n}^{\Lambda} - \{\ho\})$
only has non-vanishing reduced homology
in dimension $i$ when
$n \equiv (i+2) \cdot a \bmod d$.
\end{corollary}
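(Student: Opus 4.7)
The plan is to read off the result directly from Theorem~\ref{theorem_a_d}. That theorem describes $\rH_{i}(\triangle(\Pi_{n}^{\Lambda} - \{\ho\}))$ as a direct sum of Specht modules indexed by compositions $\vec{d} = (d_{1}, d_{2}, \ldots, d_{i+2})$ of $n$ into exactly $i+2$ parts, where each part $d_{j}$ is required to lie in the set $\{a\} \cup A = \{a, a+d, a+2d, \ldots, a+(a-1)\cdot d\}$. So the entire strategy is to observe a congruence constraint on such compositions and conclude that when $n$ fails the congruence, the indexing set of the direct sum is empty.

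First I would note that every element of $\{a\} \cup A$ is congruent to $a$ modulo $d$, since each such element has the form $a + j\cdot d$ for some $0 \leq j \leq a-1$. Consequently, if $\vec{d} = (d_{1}, \ldots, d_{i+2})$ is a composition of $n$ satisfying the hypothesis of Theorem~\ref{theorem_a_d}, then
\begin{equation*}
n = d_{1} + d_{2} + \cdots + d_{i+2} \equiv \underbrace{a + a + \cdots + a}_{i+2 \text{ terms}} = (i+2)\cdot a \pmod{d} .
\end{equation*}
Hence the existence of any composition indexing a summand in the decomposition of $\rH_{i}$ forces the congruence $n \equiv (i+2)\cdot a \pmod{d}$.

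Taking the contrapositive, if $n \not\equiv (i+2)\cdot a \pmod{d}$, then no composition $\vec{d}$ of $n$ into $i+2$ parts from $\{a\} \cup A$ exists, the direct sum in Theorem~\ref{theorem_a_d} is empty, and therefore $\rH_{i}(\triangle(\Pi_{n}^{\Lambda} - \{\ho\})) = 0$. There is no real obstacle here; the only care needed is to make sure the indexing set in Theorem~\ref{theorem_a_d} is read correctly (compositions into $i+2$ parts, not $i+1$ or $i$), which is why the shift from homological degree $i$ to number of parts $i+2$ produces the factor $i+2$ in the stated congruence.
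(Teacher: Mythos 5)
Your proposal is correct and is essentially the same argument the paper gives: both observe that every element of $\{a\}\cup A$ is congruent to $a$ modulo $d$, so any composition $\vec{d}$ of $n$ into $i+2$ parts from $\{a\}\cup A$ forces $n\equiv(i+2)\cdot a\bmod d$, and hence the direct sum in Theorem~\ref{theorem_a_d} is empty when this congruence fails. You merely spell out the contrapositive step that the paper leaves implicit.
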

\begin{proof}
Since all entries in the set $\{a\} \cup A$ are
congruent to $a$ modulo $d$, we have
$n = \sum_{j=1}^{i+2} d_{j} \equiv (i+2) \cdot a \bmod d$.
\end{proof}

In Tables~\ref{table_Pi_3_5_7_even}
and~\ref{table_Pi_3_5_7_odd}
we have explicitly calculated the
reduced homology groups
for the order complex
$\triangle(\Pi^{\langle 3,5,7 \rangle}_{n} - \{\ho\})$
for $8 \leq n \leq 15$,
that is, the case $a=3$ and $d=2$.
In this case the previous corollary
implies that the order complex only
has non-vanishing homology in dimensions
of the same parity as~$n$.

\begin{example}
{\rm
When the integer $d$ divides the integer $a$,
the homology groups of $\Pi_{n}^{\Lambda}$ have been studied.
In this case, the filter $\Pi_{n}^{\Lambda}$ consists of all partitions
where the block sizes are divisible by~$d$
and the block sizes are greater than or equal to $a$.
This filter was studied by Browdy~\cite{Browdy},
and our Theorem~\ref{theorem_a_d} reduces to
Browdy's result; see Corollary~5.3.3 in~\cite{Browdy}.
}
\end{example}

\begin{example}
{\rm
The previous example is particularly nice when $d=1$.
The semigroup~$\Lambda$ is given by
$\Lambda = \{n \in \mathbb{P} \: : \: n \geq a\}$
and the filter $\Pi_{n}^{\Lambda}$
consists of all partitions where
$1, 2, \ldots, a-1$ are forbidden block sizes.
In this case it follows by Billera and
Meyers~\cite{Billera_Myers} that $\Delta_{n}$ is non-pure shellable.
Additionally, Bj\"orner and Wachs~\cite{Bjorner_Wachs_non_pure_I}
gave an $EL$-labelling of $\Pi_{n}^{\Lambda} \cup \{\hz\}$.
This order complex was also
considered by Sundaram in Example~4.4 in~\cite{Sundaram_Hopf}.
}
\end{example}

\section{The partition filter $\Pi_{n}^{\langle a,b \rangle}$}
\label{section_partition_filter_a_b}

Let $a$ and $b$ be two relatively prime integers greater than $1$.
Let $\Pi_{n}^{\langle a,b \rangle}$ be the filter in $\Pi_{n}$ generated by all partitions 
whose block sizes are all $a$ or $b$.
As an example, $\Pi_{n}^{\langle 2,3 \rangle}$
consists of all partitions in $\Pi_{n}$ with no singleton blocks.
The corresponding complex $\Delta_{n}$
in $\Comp(n)$ consists of all compositions of~$n$
whose parts are contained in the set
$\langle a,b\rangle=\{i\cdot a+j\cdot b\::\:a,b\in\mathbb{N}\}$.
When $a=2$ and $b=3$ the complex $\Delta_{n}$ is known as
the complex of sparse sets;
see~\cite{Clark_Ehrenborg,Kozlov_sparse_sets}.

Following Theorem~4.1 in~\cite{Clark_Ehrenborg},
we define the set
$A = \{ n\in\Ppp : n\equiv0,a,b\text{ or }a+b \bmod ab\}$
and the function $h : A \longrightarrow \mathbb{Z}_{\geq -1}$
as follows:
\begin{equation}
h(n) =
\begin{cases} \frac{2n}{ab}-2 &\mbox{if } n \equiv 0 \bmod{ab}, \\ 
 \frac{2(n-a)}{ab}-1 &\mbox{if } n \equiv a \bmod{ab}, \\ 
 \frac{2(n-b)}{ab}-1 &\mbox{if } n \equiv b \bmod{ab}, \\ 
 \frac{2(n-a-b)}{ab} &\mbox{if } n \equiv a+b \bmod{ab}.
\end{cases}
\label{equation_h}
\end{equation}
Then Theorem 4.1 in~\cite{Clark_Ehrenborg}
states that $\Delta_{n}$ is either homotopy equivalent to 
a sphere or is contractible, according to 
$$\Delta_{n}\simeq\begin{cases} S^{h(n)} &\mbox{if } n\in A,\\
\mbox{point}&\mbox{otherwise.}
\end{cases}
$$
Using equation~\eqref{equation_Frobenius_link}
we see that if $\vec{c} \in \Delta_{n}$
has any part not in $A$, then $\link_{\vec{c}\,}(\Delta_{n})$ is contractible.
If each part of $\vec{c}$ is in~$A$, then
$\link_{\vec{c}\,}(\Delta_{n})
\simeq S^{h(c_{1})}*\cdots*S^{h(c_{k})}=S^{h(\vec{c}\,)}$,
where we define
$h(\vec{c}\,)=k-1+\sum_{j=1}^{k} h(c_{j})$
for compositions~$\vec{c}$ with all parts in $A$, since the join of 
an $n$-dimensional sphere and an $m$-dimensional sphere
is an $(n+m+1)$-dimensional sphere.
Note that
$h(\vec{c}\,)$ is undefined for all other compositions.

For a composition $\vec{c}=(c_{1}, \ldots, c_{k})$ of $n$
with all of its parts in $A$, 
let $\dim(\vec{c}\,)$ denote the dimension of the reduced homology
of $\triangle(\Pi_{n}^{\langle a,b \rangle} - \{\ho\})$
to which the composition~$\vec{c}$ contributes.
That is,
$\dim(\vec{c}\,)$ is given by
\begin{equation}
\dim(\vec{c}\,) = h(\vec{c}\,) + k-1 = \sum_{i=1}^{k} h(c_{i}) + 2k-2.
\label{equation_composition_dimension}
\end{equation}
We can apply
Theorem~\ref{theorem_main_theorem_order_complex}
to obtain

\begin{theorem}
Let $2\leq a<b$ with $\gcd(a,b)=1$.
Then the $i$th reduced homology group of
$\triangle(\Pi_{n}^{\langle a,b \rangle}-\{\ho\})$
is given by the direct sum of Specht modules
$\bigoplus_{\vec{c}\,\in F_{i}} S^{B^{*}(\vec{c}\,)}$,
where $F_{i}$ is the collection of compositions~$\vec{c}$ of $n$
where all the parts are in the set $A$ with
$\dim(\vec{c}\,) = i$.
\end{theorem}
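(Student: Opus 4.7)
The plan is to apply Theorem~\ref{theorem_main_theorem_order_complex} directly and then identify which compositions $\vec{c} \in \Delta_n$ contribute a nonzero summand. By that theorem, we have
$$\rH_i(\triangle(\Pi_n^{\langle a,b\rangle} - \{\ho\})) \cong_{\SSSS_{n-1}} \bigoplus_{\vec{c}\, \in \Delta_n} \rH_{i-|\vec{c}\,|+1}(\link_{\vec{c}\,}(\Delta_n)) \otimes S^{B^{*}(\vec{c}\,)},$$
so the task reduces entirely to understanding the links $\link_{\vec{c}\,}(\Delta_n)$ for the Frobenius complex associated with the semigroup $\langle a,b\rangle$.

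First, I would invoke equation~\eqref{equation_Frobenius_link}, which gives $\link_{\vec{c}\,}(\Delta_n) = \Delta_{c_1} * \Delta_{c_2} * \cdots * \Delta_{c_k}$. Combined with the cited result of Clark--Ehrenborg that $\Delta_m \simeq S^{h(m)}$ when $m \in A$ and $\Delta_m$ is contractible otherwise, a quick dichotomy appears: if some part $c_j \notin A$, then one factor in the join is contractible, so the whole join is contractible and contributes nothing to the sum. Otherwise, every $\Delta_{c_j}$ is a sphere of dimension $h(c_j)$, and the iterated join of spheres is a sphere of dimension $(k-1) + \sum_{j=1}^k h(c_j) = h(\vec{c}\,)$.

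Next I would match indices. For such a composition $\vec{c}$ with all parts in $A$, the link's only nonvanishing reduced homology lies in dimension $h(\vec{c}\,)$, so the summand contributes to $\rH_i$ precisely when $i - |\vec{c}\,| + 1 = h(\vec{c}\,)$, that is, when
$$i = h(\vec{c}\,) + k - 1 = \sum_{j=1}^k h(c_j) + 2k - 2 = \dim(\vec{c}\,)$$
by equation~\eqref{equation_composition_dimension}. In this case $\rH_{h(\vec{c}\,)}(\link_{\vec{c}\,}(\Delta_n))$ is one-dimensional with trivial $\SSSS_{n-1}$-action (by Remark~\ref{remark_action}), so by Remark~\ref{remark_trivial_action} the tensor product is simply $S^{B^{*}(\vec{c}\,)}$. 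Summing over all $\vec{c}$ with parts in $A$ and $\dim(\vec{c}\,) = i$ yields the desired direct sum $\bigoplus_{\vec{c}\, \in F_i} S^{B^{*}(\vec{c}\,)}$.

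There is no real obstacle here; the argument is essentially a bookkeeping exercise once the Clark--Ehrenborg classification of $\Delta_m$ and the join formula~\eqref{equation_Frobenius_link} are brought together with our main theorem. The only mildly subtle point is confirming the dimension formula for the join of spheres, where one must remember that the $(-1)$-dimensional sphere (the empty complex) behaves as a unit so that the identity $S^p * S^q \simeq S^{p+q+1}$ extends consistently across all parts $c_j \in A$, including those with $h(c_j) = -1$ (the case $c_j \in \{a,b\}$).
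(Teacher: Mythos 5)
Your proposal is correct and follows the same route as the paper: apply Theorem~\ref{theorem_main_theorem_order_complex}, use the join decomposition~\eqref{equation_Frobenius_link} together with the Clark--Ehrenborg classification of $\Delta_m$ to identify the links as spheres or contractible, and then match dimensions via~\eqref{equation_composition_dimension}. You have merely spelled out in the proof body the preparatory discussion the paper places in the paragraphs just before the theorem.
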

\begin{proof}
We directly have
\begin{align*}
\rH_{i}(\Delta(\Pi_{n}^{\langle a,b \rangle} - \{\ho\}))
& \cong_{\SSSS_{n-1}}
\bigoplus_{\vec{c} \in \Delta}
\rH_{i_{\vec{c}}}(\link_{\vec{c}\,}(\Delta_{n})) \otimes S^{B^{*}(\vec{c}\,)} \\
& \cong_{\SSSS_{n-1}}
\bigoplus_{\vec{c}\in F_{i}}
\rH_{i_{\vec{c}\,}}(S^{h(\vec{c}\,)}) \otimes S^{B^{*}(\vec{c}\,)}\\
& \cong_{\SSSS_{n-1}}
\bigoplus_{\vec{c}\in F_{i}} S^{B^{*}(\vec{c}\,)} .
\qedhere
\end{align*}
\end{proof}

We now describe the top and
bottom reduced homology of
the order complex $\Delta(\Pi_{n}^{\langle a,b \rangle} - \{\ho\})$.
We begin with the top homology.
\begin{proposition}
\label{proposition_a_b_top}
Let $2\leq a<b$ with $\gcd(a,b)=1$.
Let $r$ be the unique integer such that
$0 \leq r < a$ and $n\equiv rb\bmod a$.
Then the top homology of $\triangle(\Pi_{n}^{\langle a,b \rangle}-\{\ho\})$, which occurs
in dimension $(n-r(b-a))/a-2$,
is given by the direct sum of Specht modules
$\bigoplus_{\vec{c}\,\in R} S^{B^{*}(\vec{c}\,)}$,
where $R$ is the collection of compositions~$\vec{c}$ of~$n$ where 
exactly $r$ of the parts are equal to $b$ or $a+b$,
and the remaining parts are all equal to~$a$.
\end{proposition}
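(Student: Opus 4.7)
The plan is to apply the preceding theorem, which expresses $\rH_i(\triangle(\Pi_n^{\langle a,b\rangle} - \{\ho\}))$ as a direct sum of Specht modules indexed by the set $F_i$ of compositions $\vec{c}$ of $n$ with all parts in $A$ and $\dim(\vec{c}\,) = i$. The task therefore reduces to computing the maximum of $\dim(\vec{c}\,)$ over such compositions and identifying which compositions attain this maximum.

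The key device is a loss function. For each $c \in A$ set $\ell(c) = c/a - h(c) - 2$, so that equation~\eqref{equation_composition_dimension} rewrites as
$$ \dim(\vec{c}\,) = n/a - 2 - \sum_{i=1}^{k} \ell(c_i). $$
A direct case analysis using equation~\eqref{equation_h}, writing $c = jab + \varepsilon$ with $\varepsilon \in \{0,a,b,a+b\}$ and $j \geq 0$, gives $\ell(c) = j(b-2)$ when $\varepsilon \in \{0,a\}$ and $\ell(c) = (b-a)/a + j(b-2)$ when $\varepsilon \in \{b,a+b\}$. Since $b \geq 3$, this yields two key estimates: parts with $c \equiv 0 \pmod{a}$ satisfy $\ell(c) \geq 0$ with equality if and only if $c = a$, while parts with $c \equiv b \pmod{a}$ satisfy $\ell(c) \geq (b-a)/a$ with equality if and only if $c \in \{b,a+b\}$.

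To finish, observe that every part $c_i$ of $\vec{c}$ satisfies $c_i \equiv 0$ or $c_i \equiv b \pmod{a}$, since $A$ consists of integers congruent to $0, a, b$ or $a+b$ modulo $ab$. Letting $r'$ denote the number of parts in the second class, reducing $\sum c_i = n$ modulo $a$ yields $r' b \equiv r b \pmod{a}$; combined with $\gcd(a,b) = 1$ and $0 \leq r < a$, this forces $r' \geq r$. Therefore $\sum_{i} \ell(c_i) \geq r(b-a)/a$, so $\dim(\vec{c}\,) \leq (n - r(b-a))/a - 2$, with equality if and only if exactly $r$ parts of $\vec{c}$ belong to $\{b, a+b\}$ and every other part equals $a$. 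Feeding this characterization of the top-dimensional stratum of $F_i$ into the preceding theorem delivers the stated direct sum decomposition. The bulk of the work lies in the four-case verification of the loss estimates; the modular arithmetic step is the conceptual heart that explains the uniform appearance of $r$ in the answer.
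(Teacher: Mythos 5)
Your argument is correct and takes a genuinely different route from the paper's. The paper proves the top-dimension claim by exhibiting two local \emph{replacement procedures}: one that breaks a large part $jab+\varepsilon$ (with $\varepsilon\in\{0,a,b,a+b\}$) into parts of size $a$ and at most one $b$, and one that takes $a$ parts from $\{b,a+b\}$ and converts them into $a$'s. Each move is shown to increase $\dim(\vec{c}\,)$ (by $(b-a)j$ and $b-a$ respectively), and iterating leaves exactly the compositions with at most $a-1$ parts in $\{b,a+b\}$ and all other parts equal to $a$; a mod-$a$ count then pins the number of exceptional parts at $r$. Your loss function $\ell(c)=c/a-h(c)-2$ replaces this iterative improvement scheme with a single potential-function inequality: the identity $\dim(\vec{c}\,)=n/a-2-\sum_i\ell(c_i)$ converts the maximization into a minimization of total loss, the four-case computation $\ell(jab+\varepsilon)=j(b-2)$ or $(b-a)/a+j(b-2)$ gives sharp per-part lower bounds by residue class mod $a$, and the constraint $r'\geq r$ from $n\equiv r'b\equiv rb\pmod a$ and $\gcd(a,b)=1$ closes the bound. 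What the paper's approach buys is an explicit recipe that, applied to any $\vec{c}\in\Delta_n$, terminates at an element of $R$ (so nonemptiness of $R$ comes for free once $\Delta_n\neq\emptyset$); what yours buys is a cleaner, non-iterative upper bound with a transparent equality analysis, at the small cost of needing to note separately (which you may want to record) that $R\neq\emptyset$ whenever $n\in\langle a,b\rangle$ --- which follows because any representation $n=\alpha a+\beta b$ with $\alpha,\beta\geq 0$ forces $\beta\equiv r\pmod a$, hence $\beta\geq r$ and $n\geq rb$.
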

\begin{proof}
We present two procedures that will
change a composition~$\vec{c}$ into
another composition~$\cp$ such that
the dimension of contribution from~$\cp$ is greater than
the contribution of~$\vec{c}$,
that is, $\dim(\vec{c}\,) < \dim(\cp)$.
The compositions which we cannot
improve with this procedure are those described in the statement of the
proposition.

We now describe the first replacement 
procedure.
If the composition~$\vec{c}$ has a part of the form
\begin{itemize}
\item[(i)] $jab$, replace it with $jb$ $a$'s,
\item[(ii)] $jab+a$, replace it with $(jb+1)$ $a$'s,
\item[(iii)] $jab+b$, replace it with $jb$ $a$'s and one $b$,
\item[(iv)] $jab+a+b$, replace it with $(jb+1)$ $a$'s and one $b$,
\end{itemize}
to obtain a new composition~$\cp$.
We claim that $\dim(\cp) - \dim(\vec{c}\,) = (b-a) \cdot j$.
We check the computation in the case (iv),
the other three cases are similar.
The difference 
$\dim(\cp) - \dim(\vec{c}\,)$ only depends on the parts affected
and the number of them.
Hence
\begin{align*}
\dim(\cp) - \dim(\vec{c}\,)
& = 
[(jb+1) \cdot h(a) + h(b) + 2(jb+2)] - [h(jab+a+b) + 2] \\
& = 
[jb+2] - [2j + 2] = (b-a) \cdot j > 0 ,
\end{align*}
using that $h(a) = h(b) = -1$ and $h(jab+a+b) = 2j$.
Hence this procedure increases the dimension.

Iterating this procedure
we obtain
a new composition
with all the parts
of the form $a$, $b$ and~$a+b$.

The second replacement procedure is as follows.
Assume that there are $a$ parts of the composition~$\vec{c}$
that are different from~$a$.
Assume that $p$ of these parts are equal to $a+b$,
and hence $a-p$ of them are equal to $b$.
Replace these $a$ parts with $b+p$ parts equal to $a$
to obtain a new composition~$\cp$.
\begin{align*}
\dim(\cp) - \dim(\vec{c}\,)
& =
[(b+p) \cdot h(a) + 2 (b+p)]
-
[p \cdot h(a+b) + (a-p) \cdot h(b) + 2a] \\
& =
[b+p] - [a+p] = b-a > 0.
\end{align*}
Hence the new composition~$\cp$ contributes
to a homology of dimension
$b-a > 0$ greater than the composition~$\vec{c}$ does.

Iterating the last procedure,
we are left with a composition~$\vec{c}$ where the number of parts
different from $a$ is at most $a-1$.
By considering the equation
$c_{1} + \cdots +c_{k} = n$ modulo~$a$,
we obtain the number of parts
different from $a$ is given by the integer $r$ from the statement of the proposition.
Additionally, switching between one part of $a+b$ and the two parts
$a$ and $b$ does not change the dimension of the contribution of the composition.
Finally, we compute the contribution of the composition
$(\underbrace{a, \ldots, a}_{(n-br)/a}, \underbrace{b, \ldots, b}_{r})$
to obtain the desired dimension.
\end{proof}

\begin{corollary}
Let $2\leq a<b$ with $\gcd(a,b)=1$.
Assume that $n$ is divisible by $a$.
Then the top homology of $\triangle(\Pi_{n}^{\langle a,b \rangle}-\{\ho\})$,
which occurs
in dimension $n/a -2$,
is the Specht module~$S^{B^{*}(a,a, \ldots, a)}$.
\end{corollary}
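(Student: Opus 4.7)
The plan is to invoke Proposition~\ref{proposition_a_b_top} directly and simply read off what happens in the special case $a \mid n$. First I would identify the integer $r$ in the hypothesis of the proposition: since $n$ is divisible by $a$, the congruence $n \equiv rb \bmod a$ together with the range $0 \leq r < a$ reduces to $rb \equiv 0 \bmod a$. Using the hypothesis $\gcd(a,b) = 1$, this forces $r \equiv 0 \bmod a$, and hence $r = 0$.

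Next I would plug $r = 0$ into the statement of Proposition~\ref{proposition_a_b_top}. The dimension of the top reduced homology becomes
\[
\frac{n - r(b-a)}{a} - 2 = \frac{n}{a} - 2,
\]
which matches the dimension stated in the corollary. The indexing set $R$ is defined as the set of compositions of $n$ having exactly $r$ parts equal to $b$ or $a+b$ and all remaining parts equal to~$a$; with $r = 0$, this set consists solely of those compositions whose every part equals $a$. Since $a \mid n$, there is exactly one such composition, namely $(a, a, \ldots, a)$ with $n/a$ parts.

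Therefore the direct sum $\bigoplus_{\vec{c} \in R} S^{B^{*}(\vec{c}\,)}$ collapses to the single summand $S^{B^{*}(a,a,\ldots,a)}$, which is the claimed Specht module. No obstacle arises, since the entire content is bookkeeping on top of the already-proved proposition; the only small check is the modular arithmetic step that forces $r = 0$, which uses the coprimality of $a$ and $b$ in an essential way.
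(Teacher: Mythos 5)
Your argument is correct and essentially the same as the paper's: both identify that $a \mid n$ together with $\gcd(a,b)=1$ forces $r=0$ in Proposition~\ref{proposition_a_b_top}, and then observe that the only composition in the indexing set $R$ is $(a,a,\ldots,a)$.
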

\begin{proof}
When $a$ divides $n$, then the integer $r$ of Proposition~\ref{proposition_a_b_top}
is $0$. Thus the only contribution to reduced homology in dimension $n/a-2$ is
given by $(a,a, \ldots, a)$.
\end{proof}

We now turn our attention to the bottom reduced homology.

\begin{proposition}
\label{proposition_a_b_bottom}
Let $3\leq a<b$ with $\gcd(a,b)=1$.
Let $r$ and $s$ be the two unique integers
such that 
$$ n \equiv rb \bmod a, 
  \:\:\:\: 0 \leq r < a, 
  \:\:\:\: n \equiv sa \bmod b
  \:\:\:\: \text{ and } \:\:\:\: 0 \leq s < b . $$
Then the bottom reduced homology of $\triangle(\Pi_{n}^{\langle a,b \rangle}-\{\ho\})$
occurs in dimension $2\cdot\frac{n-sa-rb}{ab}+r+s-2$, and 
is given by the direct sum of Specht modules $S^{B^{*}(\vec{c}\,)}$
over all compositions~$\vec{c}$
such that
the number of parts of $\vec{c}$
of the form $j\cdot ab+a$ and $j\cdot ab+a+b$
is~$s$
and
the number of parts of the form $j\cdot ab+b$ and $j\cdot ab+a+b$ 
is~$r$.
\end{proposition}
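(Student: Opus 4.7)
The plan is to directly minimize $\dim(\vec{c}\,)$ from equation~\eqref{equation_composition_dimension} over all compositions $\vec{c}$ of $n$ whose parts lie in the set $A$, and then read off the minimizing compositions to obtain the claimed Specht module decomposition via Theorem~\ref{theorem_main_theorem_order_complex}. This is parallel to the proof of Proposition~\ref{proposition_a_b_top}, but a single aggregated calculation works more cleanly than iterated replacements, since a uniform lower bound suffices rather than a sequence of strict improvements.

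First I would write each part $c_i \in A$ uniquely as $c_i = j_i \cdot ab + \epsilon_i$ with $j_i \geq 0$ and $\epsilon_i \in \{0,a,b,a+b\}$, and introduce indicators $\alpha_i = 1$ iff $\epsilon_i \in \{a,a+b\}$ and $\beta_i = 1$ iff $\epsilon_i \in \{b,a+b\}$. Equation~\eqref{equation_h} immediately gives $h(c_i) + 2 = 2 j_i + \alpha_i + \beta_i$, so summing over parts and setting $J = \sum_i j_i$, $S = \sum_i \alpha_i$, $T = \sum_i \beta_i$, equation~\eqref{equation_composition_dimension} becomes
$$ \dim(\vec{c}\,) + 2 \;=\; 2 J + S + T, \qquad n \;=\; J \cdot ab + S \cdot a + T \cdot b . $$
Reducing the second identity modulo $a$ and modulo $b$ and using $\gcd(a,b)=1$, one obtains $T \equiv r \pmod{a}$ and $S \equiv s \pmod{b}$. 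Writing $T = r + k a$ and $S = s + m b$ with non-negative integers $k,m$ and substituting back, a brief manipulation produces
$$ \dim(\vec{c}\,) + 2 \;=\; 2 N + r + s + k (a - 2) + m (b - 2), $$
where $N = (n - s a - r b)/(a b)$ is a non-negative integer.

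Since $a \geq 3$ and $b > a$, both coefficients $a - 2$ and $b - 2$ are strictly positive, so the expression is minimized precisely when $k = m = 0$, giving bottom dimension $2 N + r + s - 2$ as claimed. The condition $k = m = 0$ amounts to $S = s$ and $T = r$, which by the definitions of $\alpha_i$ and $\beta_i$ recovers exactly the characterization in the statement. A final appeal to Theorem~\ref{theorem_main_theorem_order_complex}, as in the preceding theorem, packages these contributions as the claimed direct sum of Specht modules. The main obstacle I anticipate is verifying that the hypothesis $a \geq 3$ is genuinely used: when $a = 2$ the coefficient $a - 2$ vanishes and arbitrarily many $j a b + b$ parts could be traded for $j a b$ parts without changing the dimension, so the characterization of minimizers would need to be enlarged; keeping this asymmetry in mind while doing the modular bookkeeping is the one subtle point.
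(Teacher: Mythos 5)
Your argument is correct, and it takes a genuinely different route from the paper. The paper proves the proposition by defining two local replacement procedures that strictly decrease the dimension: one trades $b$ parts congruent to $a$ or $a+b$ modulo $ab$ for a new $ab$ part (decreasing dimension by $b-2$), and the other symmetrically trades $a$ parts congruent to $b$ or $a+b$ modulo $ab$ (decreasing dimension by $a-2$), with a small caveat about discarding zero entries that arise along the way. Iterating these until neither applies yields a composition of the claimed shape. Your approach collapses this to a single closed-form identity: once you parametrize each part as $j_i\,ab + \epsilon_i$ with $\epsilon_i \in \{0,a,b,a+b\}$ and observe $h(c_i)+2 = 2j_i + \alpha_i + \beta_i$, the equations $\dim(\vec{c}\,)+2 = 2J + S + T$ and $n = J\,ab + S\,a + T\,b$ together with the modular constraints give
$$\dim(\vec{c}\,) + 2 = 2N + r + s + k(a-2) + m(b-2),$$
where $N = (n - sa - rb)/(ab) = J+m+k \geq 0$ and $k,m \geq 0$. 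The lower bound, the characterization of the minimizers $k=m=0$, and the role of the hypothesis $a\geq 3$ are then all visible at a glance, and the zero-entry caveat disappears because there is no iteration. The algebra checks out: each coefficient $a-2$ and $b-2$ corresponds exactly to the dimension drop in one of the paper's two procedures, so the two proofs encode the same information, but yours replaces a termination argument with a single inequality. What your one-shot approach trades away is a constructive path from an arbitrary composition to a minimizer, which the paper's procedures provide implicitly; both proofs share the mild implicit assumption that some composition with all parts in $A$ exists so that the minimum is actually attained, which neither spells out explicitly but which is harmless in context.
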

\begin{proof}
Just as in Proposition~\ref{proposition_a_b_top}, we will define
replacement procedures, where our goal now
is to decrease the dimension of the homology that our composition contributes to, rather than increase it, as was the case in Proposition~\ref{proposition_a_b_top}.

The first procedure takes $b$ parts of the composition~$\vec{c}$
of the form $jab+a$ and $jab+a+b$ and 
subtracts~$a$ from each of these $b$~parts,
and adjoins a new part $ab$.
Notice that 
the resulting new composition $\cp$
remains a composition of $n$.
Observe that
$h(jab) = h(jab+a) -1$,
$h(jab+b) = h(jab+a+b) -1$,
and $h(ab) = 0$.
Hence the dimension $\cp$ contributes to
is
$\dim(\cp) = \sum_{i=1}^{k+1} h(c_{i}^{\prime}) + 2(k+1) - 2
= \sum_{i=1}^{k} h(c_{i}) - b + 2(k+1) - 2
= \dim(\vec{c}\,) - b+2 < \dim(\vec{c}\,)$.

There is one small caveat.
In the procedure, replacing a part $a$ with $0$
we obtain a weak composition, that is, we can introduce zero entries.
Note the natural extension of the function~$h$
satisfies $h(0) = -2$.
Assume that $\cp$ has a zero entry, say in its last entry,
and let $\cpp$ be the (weak) composition with this last entry removed.
Then we have that
$\dim(\cp) = \sum_{i=1}^{k+1} h(c_{i}^{\prime}) + 2(k+1) - 2
= \sum_{i=1}^{k} h(c_{i}^{\prime\prime}) + 2k - 2
= \dim(\cpp)$.
Thus zero entries can be removed without 
changing the dimension.

The second procedure
is symmetric to the first in the two parameters $a$ and~$b$.
That is, it takes $a$~parts of the composition $\vec{c}$
of the form $jab+b$ and $jab+a+b$ and 
subtracts~$b$ from each of these $a$ parts and
adjoins a new part~$ab$. Now we have 
$\dim(\cp) = \dim(\vec{c}\,) - a+2 < \dim(\vec{c}\,)$,
using the fact that $a \geq 3$.

Iterating these two procedures we obtain a composition
which 
has at most $b-1$ parts of the form $jab+a$ and $jab+a+b$,
and
at most $a-1$ parts of the form $jab+b$ and $jab+a+b$.
Hence this composition satisfies the condition of the statement
of the proposition.
Finally, one has to observe that all such composition contribute
to the same dimension.
\end{proof}

\begin{corollary}
Let $3 \leq a < b$, $\gcd(a,b) = 1$
and let $n$ be divisible by~$ab$.
Then the bottom reduced homology
of the order complex
$\triangle(\Pi_{n}^{\langle a,b \rangle} - \{\ho\})$
is given by
the permutation module
$M^{B^{\#}(ab, \ldots, ab, ab)} = M^{B(ab, \ldots, ab, ab-1)}$.
\end{corollary}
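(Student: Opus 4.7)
The plan is to specialize Proposition~\ref{proposition_a_b_bottom} to the case $ab \mid n$ and then recognize the resulting direct sum of Specht modules as a single permutation module via Lemma~\ref{lemma_permutation_module_isomorphism}. Since $ab \mid n$, we have $n \equiv 0 \bmod a$ and $n \equiv 0 \bmod b$, so the unique integers $r, s$ in the proposition are $r = s = 0$. Hence the bottom reduced homology of $\triangle(\Pi_{n}^{\langle a,b \rangle} - \{\ho\})$ occurs in dimension $\tfrac{2n}{ab} - 2$ and equals $\bigoplus_{\vec{c}} S^{B^{*}(\vec{c}\,)}$, where the sum is over all compositions $\vec{c}$ of~$n$ whose parts belong to $A$, with \emph{no} part of the form $jab + a$, $jab + b$, or $jab + a + b$. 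Looking at the four residue classes in the definition~\eqref{equation_h} of~$A$, this characterizes the parts of $\vec{c}$ as being exactly the positive multiples of~$ab$.

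Next, I would identify these compositions with the coarsenings of the composition $(ab, ab, \ldots, ab)$ of~$n$ (which has $m = n/(ab)$ parts). Indeed, a composition of~$n$ all of whose parts are positive multiples of~$ab$ is precisely one obtained from $(ab, ab, \ldots, ab)$ by merging runs of adjacent parts. In the order of $\Comp(n)$ this is exactly the set $\{\vec{c} : \vec{c} \geq (ab,\ldots,ab)\}$, or equivalently $\{\vec{c} : \vec{c} \leq^{*} (ab,\ldots,ab)\}$ in the dual order used throughout the paper.

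Applying Lemma~\ref{lemma_permutation_module_isomorphism} with $\vec{c} = (ab, ab, \ldots, ab)$ now yields
\[
\bigoplus_{\vec{c}\,\leq^{*} (ab,\ldots,ab)} S^{B^{*}(\vec{c}\,)}
\cong_{\SSSS_{n-1}}
M^{B^{\#}(ab, ab, \ldots, ab)},
\]
which, by the defining equation $B^{\#}(\vec{c}\,) = B(\vec{c}/1)$, is the permutation module $M^{B(ab, \ldots, ab, ab-1)}$. This is the claimed description. The only step requiring care is the case-by-case verification (across the four residue classes defining~$A$) that the forbidden-part conditions of Proposition~\ref{proposition_a_b_bottom} collapse to ``every part is a multiple of~$ab$'' when $r = s = 0$; the remaining identification is a direct application of the permutation-module decomposition lemma.
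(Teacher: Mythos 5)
Your proof is correct and takes the same route as the paper's: set $r=s=0$ since $ab\mid n$, deduce that the contributing compositions are exactly those whose parts are positive multiples of $ab$ (equivalently, coarsenings of $(ab,\ldots,ab)$), and apply Lemma~\ref{lemma_permutation_module_isomorphism} to recognize the direct sum of Specht modules as $M^{B^{\#}(ab,\ldots,ab)}=M^{B(ab,\ldots,ab,ab-1)}$. The paper states this very tersely; your version simply spells out the intermediate identifications.
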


\begin{proof}
We have $r=s=0$. Hence the compositions only
have parts of the form $j \cdot ab$. The result follows
from Lemma~\ref{lemma_permutation_module_isomorphism}.
\end{proof}

We end with a complete description in the case when $a=2$.
\begin{proposition}
Let $b$ be odd and greater than or equal to~$3$.
Then the $i$th reduced homology of
$\triangle(\Pi_{n}^{\langle 2,b \rangle}-\{\ho\})$ is given by 
the direct sum 
of Specht modules $S^{B^{*}(\vec{c}\,)}$
over all compositions $\vec{c}$
with all parts congruent to $0$ or $2$ modulo~$b$,
where exactly $(b(i+2) - n)/(b-2)$
entries of $\vec{c}$ are congruent to $2$ modulo~$b$.
The bottom reduced homology occurs in dimension 
$\lceil n/b \rceil - 2$.
Furthermore, when $b$ divides~$n$
the bottom reduced homology is given by
the permutation module $M^{B^{\#}(b,\ldots,b,b)}=M^{B(b,\ldots,b,b-1)}$.
\end{proposition}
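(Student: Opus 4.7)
The plan is to apply Theorem~\ref{theorem_main_theorem_order_complex} to the partition filter $\Pi_n^{\langle 2,b\rangle}$, reusing the setup developed at the start of this section with $a=2$. Since $b$ is odd and $\gcd(2,b)=1$, a short residue calculation identifies the set $A$ of equation~\eqref{equation_h} with $\{c\in\Ppp : c\equiv 0\text{ or }2\pmod b\}$, and the piecewise definition of $h$ collapses to $h(c)=c/b-2$ when $c\equiv 0\pmod b$ and $h(c)=(c-2)/b-1$ when $c\equiv 2\pmod b$. Combining the K\"unneth decomposition of $\link_{\vec{c}}(\Delta_n)$ given by equation~\eqref{equation_Frobenius_link} with the sphere-or-contractible dichotomy for the Frobenius complex from the start of the section, only compositions $\vec{c}$ with every part in $A$ contribute, and in that case $\link_{\vec{c}}(\Delta_n)$ has the homotopy type of a single sphere.

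Next I would compute the dimension to which such a $\vec{c}$ contributes. Writing $q$ for the number of parts of $\vec{c}$ congruent to $2\pmod b$ and $p$ for the number congruent to $0\pmod b$, a direct summation with the collapsed $h$ gives
$$
  \dim(\vec{c}\,)=\sum_i h(c_i)+2(p+q)-2=\frac{n-2q}{b}+q-2,
$$
which depends only on $q$. Inverting $\dim(\vec{c}\,)=i$ yields $q=(b(i+2)-n)/(b-2)$, and summing the corresponding Specht modules over all admissible $\vec{c}$ gives the first assertion of the proposition.

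For the bottom-dimension claim I would exploit that $\dim(\vec{c}\,)$ is strictly increasing in $q$, since its discrete derivative is $(b-2)/b>0$, so the minimum is attained at the smallest non-negative integer $q_0$ with $2q_0\equiv n\pmod b$ that is realised by an actual composition of $n$ into parts in $A$. A case analysis on $n$ modulo $b$, using that parts $\equiv 0\pmod b$ are at least $b$ while parts $\equiv 2\pmod b$ are at least $2$, would then match this minimum with $\lceil n/b\rceil-2$. When $b\mid n$ this minimum is $q_0=0$ and the contributing compositions are exactly the refinements of $(n)$ whose parts are all positive multiples of $b$; these form the interval $[(b,b,\ldots,b),(n)]$ in $\Comp(n)$, so by Lemma~\ref{lemma_interval_in_composition_poset} their Specht modules sum to $S^{B^{*}((b,\ldots,b),(n))}$. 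Since no proper initial sum of $(b,b,\ldots,b)$ equals $n-1$, this shape has non-overlapping rows, and the jeu-de-taquin argument of Lemma~\ref{lemma_permutation_module_isomorphism} identifies it with the permutation module $M^{B(b,\ldots,b,b-1)}=M^{B^{\#}(b,\ldots,b,b)}$. The main obstacle will be the bottom-dimension case analysis, where one must verify both that $\lceil n/b\rceil-2$ is the minimum produced by the dimension formula and that a composition realising this minimum actually lies in $\Delta_n$ for each residue class of $n$ modulo $b$.
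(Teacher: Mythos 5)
Your setup, the collapsed form of $h$, the dimension formula $\dim(\vec{c}\,) = (n+q(b-2))/b - 2$, the inversion giving $q = (b(i+2)-n)/(b-2)$, and the $b\mid n$ endgame all match the paper's proof. (The paper reaches the permutation module directly by applying Lemma~\ref{lemma_permutation_module_isomorphism} to the interval below $(b,\ldots,b)$, whereas you route through Lemma~\ref{lemma_interval_in_composition_poset} and then observe that $B^{*}((b,\ldots,b),(n))$ has non-overlapping rows; both are correct and equivalent.) The real gap is the bottom-dimension bound, which you explicitly leave as ``the main obstacle.'' The paper dispatches it in one line, and you would likely have found that line had you kept $h$ in its unified form $h(c)=\lceil c/b\rceil-2$ for all $c\in A$ rather than splitting by residue class: then $\dim(\vec{c}\,)=\sum_{i}\lceil c_i/b\rceil - 2$, and superadditivity of the ceiling, $\lceil x\rceil+\lceil y\rceil\geq\lceil x+y\rceil$, immediately yields $\dim(\vec{c}\,)\geq\lceil n/b\rceil-2$ with no case analysis on $n\bmod b$ at all.

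One further point worth flagging, since your proposed approach would have run into it: the ceiling inequality, like your sketch, only proves the \emph{lower} bound. Neither your argument nor the paper's exhibits a composition with all parts in $A$ that actually attains dimension $\lceil n/b\rceil - 2$ when $b\nmid n$, and this can in fact fail. For $b=5$, $n=9$, the only compositions of $9$ with parts in $A=\{2,5,7,10,12,\ldots\}$ are $(2,7)$, $(7,2)$ and the three rearrangements of $(2,2,5)$, all contributing in dimension $1$, whereas $\lceil 9/5\rceil-2=0$. A completed case analysis of the kind you propose would have surfaced this, so your route is not wrong, merely unfinished and more laborious than the paper's; but you should either carry it through or note that only the lower bound (and the exact dimension when $b\mid n$) is being established.
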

\begin{proof}
Since $a=2$
the expression for $h(n)$ in equation~\eqref{equation_h}
reduces to 
$h(n)=\lceil n/b \rceil - 2$
and the set~$A$ reduces to
$\{n \in \Ppp \: : \: n \equiv 0,2 \bmod b\}$.
Let $\vec{c}$ be a composition of $n$ into $k$ parts,
where each part belongs to the set $A$.
Furthermore, assume that $\vec{c}$
has $s$ entries congruent to $2$ modulo~$b$.
The contribution of $\vec{c}$ to the reduced homology of
$\triangle(\Pi_{n}^{\langle 2,b \rangle}-\{\ho\})$,
given by equation~\eqref{equation_composition_dimension}, is in dimension
$$
\dim(\vec{c}\,)
 =
\sum_{i=1}^{k} h(c_{i}) + 2k - 2 
 =
\sum_{i=1}^{k} \left\lceil\frac{c_{i}}{b}\right\rceil - 2 
 =
\frac{\sum_{i=1}^{k} c_{i} + s \cdot (b-2)}{b}  - 2 
 =
\frac{n + s \cdot (b-2)}{b}  - 2 .
$$
Solving for $s$ in this equation yields the desired expression.

For real numbers $x$ and $y$ we have
the inequality
$\lceil x \rceil + \lceil y \rceil \geq \lceil x+y \rceil$.
Hence we obtain the lower bound on the dimension of the homology:
$\dim(\vec{c}\,)
=
\sum_{i=1}^{k} \left\lceil\frac{c_{i}}{b}\right\rceil - 2
\geq
\left\lceil\frac{n}{b}\right\rceil - 2$.
When $b$ divides~$n$ the only way to obtain equality in the previous inequality is when
all the parts of the composition are divisible by~$b$.
The bottom reduced homology group is then the direct sum over
all compositions~$\vec{c}$ of $n$ where each part is divisible by $b$,
that is, $(b,b,\ldots,b) \geq^{*} \vec{c}$.
Hence we obtain the permutation module
$M^{B^{\#}(b,\ldots,b,b)} = M^{B(b,\ldots,b,b-1)}$
by Lemma~\ref{lemma_permutation_module_isomorphism}.
\end{proof}

\section{Concluding remarks}

Using Theorem~\ref{theorem_main_theorem_order_complex}
we have been able to classify the action of $\SSSS_{n-1}$ on 
the top homology of $\triangle(\Pistar_{\Delta}-\{\ho\})$
for any complex $\Delta\subseteq\Comp(n)$.
In the case when $\triangle(\Pistar_{\Delta} - \{\ho\})$ is shellable,
is there an $EL$-labelling of $\Pistar_{\Delta}\cup\{\hz\}$
that realizes this shelling order?

Is there a way we can classify the $\SSSS_{n}$-action
on the homology groups of $\triangle(\Pistar_{\Delta}-\{\ho\})$
rather than the $\SSSS_{n-1}$-action?
Browdy described the matrices
representing the action of $\SSSS_{n}$
on the cohomology groups of the filter
with block sizes belonging
to the arithmetic progression
$k \cdot d, (k+1) \cdot d, \ldots$;
see~\cite[Section~5.4]{Browdy}.

The partition lattice is naturally associated
with the symmetric group, that is, the Coxeter group
of type~$A$.
Miller~\cite{Miller} has extended the results about the
filter $\Pistar_{\vec{c}}$ to other root systems.
Hence it is natural to ask if our results for the filter~$\Pistar_{\Delta}$
can be extended to other root systems.

Is there a non-pure shelling of the
Frobenius complex generated by $a$ and~$b$?
Alternatively, is there a Morse matching for this
Frobenius complex such that all the critical cells
are facets? While we do have this property 
for $\Lambda$ defined by an arithmetic progression as in Section~\ref{section_Frobenius},
unfortunately the general matching given
in~\cite{Clark_Ehrenborg} does not have this property.

Lastly, all of our results are based upon $\Delta$ being a filter
in the composition lattice $\Comp(n)$. What if we remove
the filter constraint?
That is, let $\Omega$ be an arbitrary collection of compositions of $n$ not containing
the extreme composition~$(n)$.
Define~$Q^{*}_{\Omega}$ to be all ordered set partitions
$\sigma = (C_{1}, C_{2}, \ldots, C_{k})$
such that $\type(\sigma) \in \Omega$ and
containing $n$ in the last block $C_{k}$.
Let $\Pi_{\Omega}$ be the image of $Q^{*}_{\Omega}$
under the forgetful map $f$. What can be said
about the homology groups and the homotopy type of
the order complex $\triangle(\Pi_{\Omega})$?
We need to understand the topology
of the links~$\link_{\vec{c}\,}(\Omega)$, even though these
links are not themselves simplicial complexes.

\section*{Acknowledgements}

The authors thank Bert Guillou and Kate Ponto
for their homological guidance and expertise.
They also thank Sheila Sundaram 
and Michelle Wachs for essential references.
The authors thank Margaret Readdy for
her comments on an earlier draft.
Both authors were partially supported by
National Security Agency grant~H98230-13-1-0280.
The first author wishes to thank the 
Princeton University Mathematics Department
where this work began.

{\small

}

\vspace*{2 mm}

\noindent
{\small \em
R.\ Ehrenborg
and
D.\ Hedmark,
Department of Mathematics,
University of Kentucky,
Lexington, KY 40506,
\{{\tt richard.ehrenborg,dustin.hedmark}\}{\tt@uky.edu}.
}

\end{document}